\documentclass{sn-jnl}
\usepackage{titlesec}
\titlespacing*{\section}{0pt}{*1.5}{*0.8}
\titlespacing*{\subsection}{0pt}{*1.2}{*0.6}
\titlespacing*{\subsubsection}{0pt}{*1}{*0.5}
\usepackage{amsthm}
\newtheoremstyle{tight}%
  {4pt}{4pt}{\itshape}{}{\bfseries}{.}{0.5em}{}
\theoremstyle{tight}

\usepackage{graphicx}%
\usepackage{subcaption}
\usepackage{multirow}%
\usepackage{float}%
\usepackage{amsmath,amssymb,amsfonts}%
\usepackage{mathtools}%
\usepackage{thmtools,thm-restate}
\makeatletter
\renewcommand\thmt@autorefsetup{\@xa\def\csname\thmt@envname autorefname\@xa\endcsname\@xa{\thmt@thmname}}
\makeatother
\makeatother
\usepackage{mathrsfs}%
\usepackage[title]{appendix}%
\usepackage{xspace}
\usepackage{xcolor}%
\usepackage{textcomp}%
\usepackage{manyfoot}%
\usepackage{booktabs}%
\usepackage{algorithm}%
\usepackage{algorithmicx}%
\usepackage{algpseudocode}%
\usepackage{listings}%
\usepackage[normalem]{ulem}%
\usepackage{tikz}%
\usepackage{stfloats}
\usepackage[makeroom]{cancel}
\usepackage[utf8]{inputenc}
\usepackage{hyperref}

\usepackage{doi}

\usetikzlibrary{matrix,arrows,decorations.pathmorphing}%

\newtheorem{theorem}{Theorem}
\newtheorem{conjecture}{Conjecture}
\newtheorem{proposition}[theorem]{Proposition}
\newtheorem{corollary}{Corollary}

\newtheorem{remark}{Remark}
\newtheorem{lemma}[theorem]{Lemma}

\newcommand\C{{\mathbb C}}

\newcommand{\AlphaEvolve}{\texttt{AlphaEvolve}\xspace}
\newcommand{\restr}[2]{%
  #1\!\vert_{#2}%
}
\def\sech{{\operatorname{sech}}}
\def\csch{{\operatorname{csch}}}
\def\cosh{{\operatorname{cosh}}}
\def\arccoth{{\operatorname{arccoth}}}

\newtheorem{result}{Result}

\def\oz1{d{\overline z}^1}
\def\oz2{d{\overline z}^2}
\def\oz3{d{\overline z}^3}

\def\oI{\overline I}

\def\oz{\overline z}

\def\oIq1{\oI_1\cdots\oI_{q-1}}
\def\oIq2{\oI_1\cdots\oI_{q-2}}

\def\supp{\mbox supp}

\begin{document}

\title[Article Title]{Smale's Mean Value Conjecture and its Dual Conjecture for Complex Polynomials}

\author[1]{\fnm{Aneesh} \sur{Jatar}}\email{aneesh.jatar1@gmail.com}
   
\author[2]{\fnm{Tuen-Wai} \sur{ Ng}}\email{ntw@maths.hku.hk}

\affil[1,2]{\orgdiv{Department of Mathematics}, \orgname{The University of Hong Kong}, \orgaddress{\street{Pokfulam Road}, \city{Hong Kong}, \country{China}}}

\abstract{For the purposes of proving Smale’s mean value conjecture, one may restrict consideration to an explicitly described family of so-called {\it Schlicht normalized polynomials}. For Schlicht normalized polynomials whose leading coefficient decays sufficiently slowly, we show that the conjectured upper bound $1$ becomes asymptotically valid as the degree $d$ tends to infinity. The key and novel input is a refined Koebe $\frac{1}{4}$-type theorem of Cunningham, tailored to Schlicht functions whose images have bounded logarithmic capacity. For the dual mean value conjecture, we obtain an improvement of Eremenko’s Markov-type inequality for regions bounded by polynomial lemniscates on which the polynomial is univalent. As a consequence, we improve the best known unconditional lower bound $\frac{1}{d^2}$ of Dubinin to $\frac{(d - \frac{1}{2})^{1/d}}{d^2}$ for all $d \geq 2$. The strengthened Markov-type inequality constitutes one of the main technical contributions of this paper. Its proof combines the solution to a related extremal problem for the logarithmic capacity of polynomial lemniscates, with the quasiconformal deformation method introduced by Eremenko and Hayman to establish the connectedness of extremal polynomial lemniscates arising in the Erd\H{o}s–Herzog–Piranian problem on maximal lemniscate length. }

\maketitle

\section{Introduction and statement of results}\label{sec1}

\subsection{Mean and Dual Mean Value Conjecture}
 
 In Stephen Smale's seminal paper \cite{Smale1981} on the efficiency and complexity of Newton's method for approximating zeros of a non-constant complex polynomial $P$ of one complex variable, Smale proposed to study the smallest positive constant $c$ such that for any point $a$ in the complex plane $\mathbb{C}$ with $P'(a)\neq 0$, $\left|\frac{P(b)-P(a)}{b-a}\right| \le c|P'(a)|$ for at least one critical point $b$ of $P$ (a zero of $P'$).  
Let $M$ be  the least possible value of the factor $c$ for all non-linear polynomials and $M_d$ be the corresponding value for polynomials of degree $d$. It was proven by Smale \cite{Smale1981} that $1 \le M\le4$ and he conjectured that $M=1$ or even $M_d=\frac{d-1}{d}$ and pointed out that the number $d-1 \over d$ would, if true, be the best possible bound here as it is attained (for any nonzero $\lambda$) when $P(z) = z^d-\lambda z$ and $a=0$. The conjecture is now known as Smale's mean value conjecture which is still unsolved and is also listed as one of the three minor problems in Smale's famous problem list \cite{Smale2000}. It was also included as Problem 6.23 and studied by the AI tool \AlphaEvolve in Georgiev, G\'omez-Serrano, Tao and Wagner's problem list \cite{GGSJTW2025} recently, where the authors note that the AI tool was unable to find a counterexample to the above conjectured value of $M_d$.  \\

To prove Smale's mean value conjecture, it suffices to consider polynomials with certain normalizations as follows. Let $Z(P')$ denote the set of critical points of $P$. For each critical point $b \in Z(P')$ and any
 noncritical point $a$ of $P$, define 
$$S(P,a,b)=\frac{P(a)-P(b)}{(a-b)P'(a)}.$$
Let $L$ be a non-constant linear polynomial. It is easy to check that $S(P \circ L^{-1}, L(a),L(b))=S(P,a,b)$ for each critical point $b \in Z(P')$. For a non-linear polynomial $P$ fixing the origin, with $P'(0) \neq 0$ and $b$ a critical point of $P$, we define $$S(P,b) \coloneq |\frac{P(b)}{b P'(0)}|$$ It follows that to prove Smale's mean value conjecture, one needs to only consider the case $a=0$ because it is always possible to choose a suitable linear polynomial $L$ such that $L(a)=0$. Thus Smale's mean value conjecture is equivalent to the following normalized conjecture:\\

\noindent
\begin{conjecture}
Let $P$ be a polynomial of degree $d \ge 2$ such that $P(0)=0$ and $P'(0) \neq 0$.  Then

$$S(P) \coloneq \min_{b \in Z(P')} S(P,b)  \le {d-1 \over d} < 1\ $$\\

\end{conjecture}

A natural ``dual'' version of the above conjecture was posed by Dubinin and Sugawa \cite{dubinin2009dualmeanvalueproblem} and independently Ng \cite{Ng}, in 2009:

\begin{conjecture}
 Let $P$ be a polynomial of degree $d \ge 2$ such that $P(0)=0$ and $P'(0) \neq 0$. Then

$$D(P) \coloneq \max_{b \in Z(P')} S(P,b)  \geq {1 \over d} \,  $$\\   
\end{conjecture}

Dubinin and Sugawa \cite{dubinin2009dualmeanvalueproblem} also showed that $D(P)\ge \frac{1}{d4^d}$ which was then slightly improved to $\frac{1}{4^d}$ in \cite{NZ2016}. The conjectured lower bound $\frac{1}{d}$, if true, would be best possible as it is attained for the polynomials $P(z) = (z+1)^d - 1$. Henceforth we will only consider nonlinear polynomials fixing the origin, with nonzero derivative at the origin.

To date, the best known unrestricted (without further assumptions on the polynomial $P$ besides its degree ) upper bound for the quantity $$S(P) \coloneq \min_{b \in Z(P')}\left|{P(b) \over bP'(0)}\right|$$ holding for polynomials of degree $d \geq 2$ is asymptotically of the form $4 - \mathcal{O}(\frac{1}{\sqrt{d}})$ as $d \rightarrow \infty$, where $d$ is the degree of $P$. This bound was established by Crane in 2007 \cite{Crane2007} using properties of special extremal polynomials for the mean value conjecture (referred to by Crane as ``Standard extremal polynomials''), whose existence was established first by Crane \cite{Crane2006} and Ng \cite{Ng} later by the theory of amoeba. The first two theorems of this paper show that, in certain asymptotic regimes as $d \rightarrow \infty$, the asymptotic constant of $4$ may be replaced by $3$ or even $1$. Notice that all the previous estimates \cite{BMN2002,CFL2007,FS2006,Crane2007} applied to this scenario can only produce the constant $4$.

The best known unrestricted (without further assumptions on the polynomial $P$ besides its degree) lower bound for the quantity $$D(P) = \max_{b \in Z(P')} |\frac{P(b)}{b P'(0)}|$$ is $\frac{1}{d^2}$, where $d$ is the degree of $P$. This lower bound follows from Theorem 2 of the 2010 paper of Dubinin $\cite{Dubinin2010}$, which establishes a more general and in fact, $\textbf{sharp}$ (i.e. best possible) finite-increment theorem for complex polynomials. 

Related to the dual mean value problem is the problem of choosing a critical point $b \in Z(P')$ which satisfies $S(P,b) \geq \frac{1}{d}$. To this end it was proved in \cite{Hinkkanen2025DualSM} that for polynomials $P$ of degree $7$, there exists a critical point $b$ of minimal modulus for which $S(P,b) \geq \frac{1}{7}$ holds. In fact, by an argument involving the Walsh coincidence theorem, one can show that if $P$ is a non-linear polynomial of degree less than or equal to $7$, given any critical point $b \in Z(P')$ of minimal modulus, one has $$S(P,b) \geq \frac{1}{d}$$ where $d = \deg(P)$. On the other hand, for higher degree polynomials, this strategy is not effective. In particular, for the degree $9$ polynomial $P_9(z) = \int_{0}^{z} a^{-7} (t-1) (t-a)^7 dt$, where $a = \exp(i\frac{18 \pi}{41})$ we have that $P_9$ has exactly two critical points, one of multiplicity 7 at $z=a$ and a simple critical point at $z=1$. Thus all critical points have minimal modulus in this case, but for the critical point $z=1$ we have $$S(P_9,1) = |P_9(1)| \approx 0.00169... < \frac{1}{9^2} = 0.01234...$$

The above example shows that in general, an arbitrary critical point of minimal modulus will not do better than the best unconditional lower bound for the dual mean value problem of $\frac{1}{d^2}$. Indeed, in the case of $P_9$, it is necessary to choose the critical point of minimal modulus and highest multiplicity for this purpose. In the case of $P_9$, the ``correct'' choice of critical point is also the critical point corresponding to the critical value of largest modulus. 

On the other hand, Dubinin has shown in \cite{Dubinin2019} that if one chooses an arbitrary critical point corresponding to a  critical value of largest modulus, one cannot do better than $\frac{\pi}{4d^2}$ asymptotically as $d \rightarrow \infty$ for the dual mean value lower bound. It is thus natural to ask whether there is a subset of critical points for which, one can obtain a lower bound strictly larger than $\frac{1}{d^2}$ for each $d \geq 2$. Corollary \ref{cor:2} of this paper shows that choosing any critical point $b$ lying on the boundary of the maximal univalent lemniscate $D(P,t(P))$ will always serve for this purpose (although there is no asymptotic improvement as $d \rightarrow \infty$). Corollary \ref{SharperDualIneq} of this paper shows that, for the purposes of maximizing $S(P,b)$ amongst the critical points $b \in Z(P') \cap \partial D(P,t(P))$, one should choose the critical point of $b \in \partial D(P,t(P)) \cap Z(P')$ for which the image of the segment $$[0,P(b)] \coloneq \{t P(b) : t \in [0,1] \}$$ by the univalent branch $f$ of $P^{-1}$ fixing the origin, $\gamma = f([0,P(b)])$, deviates most from the line segment $[0,b] \coloneq \{ t b : 0 \leq t \leq 1 \}$.

\subsection{Statement of results and preliminary constructions} \label{preliminaryConstructionsandStatementOfResults}
In this paper, we establish three main results, the first two results establish conditional improvements on the asymptotic constant of $4$ for the Smale's mean value conjecture (see Theorems \ref{thm:1},\ref{thm:2} and Corollary \ref{cor:1}) in certain asymptotic regimes as $d \rightarrow \infty$ (namely for Schlicht normalized polynomials whose leading coefficient decays sufficiently slowly), whereas the third result (Theorem \ref{thm:3}) provides an improved estimate of Markov type for the so-called \textbf{maximal univalent lemniscates}, which are the maximal lemniscate regions which the polynomial maps univalently onto a disk. Theorem \ref{thm:3} provides an improvement, for these regions, of the Markov type inequality of Eremenko (Theorem 1 of \cite{Eremenko2007}).

As a consequence of this improved Markov type inequality, we are able to establish an improvement on the best known lower bound for the \textbf{dual} mean value conjecture (see Corollaries \ref{cor:2} and \ref{SharperDualIneq}).

Before, we state our main results, we introduce a special normalization for the polynomials in Conjectures 1 and 2 and define the so-called \textbf{maximal univalent lemniscate} $D(P,t(P))$, of a polynomial $P$, which is the principal object in all the main results of this paper. Let $P$ be a non-linear polynomial of degree $d$, fixing the origin, with non-vanishing derivative at the origin. For any $s > 0$ we let $$D(P,s)$$ denote the component of $P^{-1}(\mathbb{D}_{s})$ containing the origin, where $$\mathbb{D}_{s} \coloneq \{z \in \mathbb{C}: |z| < s \}$$ Since $P$ has non-vanishing derivative at the origin, for sufficiently small $s > 0$ $$\restr{P\ }{D(P,s)} : D(P,s) \rightarrow \mathbb{D}_{s}$$ is a conformal bijection. We can then define $$t(P) \coloneq \sup \{ s : \restr{P\ }{D(P,s)} : D(P,s) \rightarrow \mathbb{D}_{s} \text{ is a conformal bijection} \} $$ Since $P$ is non-linear, we necessarily have that $$t(P) < \infty$$ moreover $$\restr{P \ }{D(P,t(P))} : D(P,t(P)) \rightarrow \mathbb{D}_{t(P)}$$ is a conformal bijection, so that $$t(P) \in \{ s : \restr{P\ }{D(P,s)}: D(P,s) \rightarrow \mathbb{D}_{s} \text{ is a conformal bijection} \} $$ We now show that for any $0<s\leq t(P)$, $D(P,s)$ is a Jordan domain. To see this note that the part of the boundary of $D(P,s)$ near a boundary point $z \in \partial D(P,s)$ must (locally) consist of at most $2d$ real-analytic Jordan arcs (and only a single real-analytic Jordan arc centered at $z$, if $z$ is a boundary point which is not a critical point of $P$) issuing from $z$ at equal angles, and forming part of the lemniscate $\{z \in \mathbb{C}: |P(z)| = s \}$, so that we conclude $$\partial D(P,s)$$ is locally-connected. Thus, the inverse branch $f$ of $P^{-1}$ fixing the origin defined on $\mathbb{D}_{s}$, must extend continuously to $\overline{\mathbb{D}_{s}}$ by Carath\'eodory's theorem (see Theorem 2.6 of page 24 of \cite{pommerenke1992boundary})), and by continuity, we necessarily have $$P(f(z)) = z$$ for all $z \in \overline{\mathbb{D}_{s}}$. It follows that $f$ extends to a homeomorphism of $\overline{\mathbb{D}_{s}}$ onto $\overline{D(P,s)}$ so that $\partial D(P,s) $ is a Jordan curve. By definition of $t(P)$, there is necessarily a critical point $b \in Z(P')$ lying on the (Jordan) boundary of $\partial D(P,t(P))$ (if not, we could extend $P$ to a conformal bijection of $D(P,s)$ onto $\mathbb{D}_{s}$ for some $s > t(P)$). It is thus equivalent to define $t(P)$ to be the maximum of the quantities $$|P(b)|$$ taken over the set of critical points $b$ of $P$ for which $P$ is a conformal bijection of $D(P,|P(b)|)$ onto $\mathbb{D}_{|P(b)|}$. In view of the above definition, we call $D(P,t(P))$ the \textbf{maximal univalent lemniscate} of $P$ (with respect to the origin). For any critical point $b \in Z(P')$, we define $$S(P,b) \coloneq |\frac{P(b)}{P'(0) b}|$$

Note that for any non-zero complex numbers $\alpha$ and $\beta$, we have for $Q(z):=\alpha P (\beta z)$, that  $S(Q,\frac{b}{\beta})=S(P,b)$ for any critical point $b \in Z(P')$. It follows that we may choose appropriate $\alpha$ and $\beta$ so that $t(Q)=1$ and $Q'(0)=1$. We call $Q$ the \textbf{Schlicht normalization} of the polynomial $P$. So for Conjecture 1 and 2, we can always (without loss of generality) assume  that 
\begin{equation}
P'(0)=1=t(P) 
\end{equation}
A polynomial satisfying these two conditions plus the condition that $P(0)=0$ will be referred to as \textbf{Schlicht normalized}. If $P$ is a Schlicht normalized polynomial, then the branch of   $P^{-1}:\mathbb{D} \to D(P,1)$ fixing the origin is a Schlicht function (in the usual sense of univalent function theory), which extends continuously to a homeomorphism $P^{-1} : \overline{\mathbb{D}} \to \overline{D(P,1)}$, and by the above discussion, $\partial D(P,1)$ is a (piecewise-analytic) Jordan curve containing at least one critical point of $P$. For $c>0$ and $d\ge 2$, we let $$\mathcal{P}_{d,c}:=\{P(z)=a_dz^d +\cdots+z: t(P)=1, |a_d|=c\}.$$

\medskip
Notice that by P\'olya's theorem (Proposition \ref{Polya}), for $\mathcal{P}_{d,c}$ to be non-empty, we necessarily have $0<c <1$. In fact, as a consequence of Theorem 2 in \cite{dubinin2012some}, the classes $\mathcal{P}_{d,c}$ are empty for $$c > \frac{1}{d}(1 - \frac{1}{d})^{d-1} = c(Q^{\ast})\sim \frac{1}{ed}$$ where $$Q^{\ast}$$ is the Schlicht normalization of $$P^{\ast}(z) \coloneq z^d - z$$ and $$c(Q^{\ast})$$ is the modulus of the leading coefficient of $Q^{\ast}$\footnote{One can also show (though this will not be needed anywhere in the paper) that the parameter range $$c \in (0, \frac{1}{d}(1 - \frac{1}{d})^{d-1}]$$ is exact, in the sense that for any $$c \in (0,  \frac{1}{d}(1 - \frac{1}{d})^{d-1}]$$ there exists a Schlicht normalized polynomial $P(z) = a_dz^d + ... + z$ of degree $d$, with $|a_d| = c$. Since we will not need this result anywhere else in the paper we omit a proof of this result.}

We now state the main results of this paper.

\begin{theorem}\label{thm:1}
 Let $L_c := \ln(1/c)$ and $g(s) := (1 + s)^2 \cdot s^{\frac{-2s}{1+s}}$ for $s \in (0,1)$. Then for any $P\in \mathcal{P}_{d,c}$, and critical point $b \in Z(P') \cap \partial D(P,1)$

\begin{itemize}
\item[i)] $S(P,b) \leq g(s_{*})$, where $s_{*} = \sqrt{1 - W_0(e c^{1/(2d)})}$ (Lambert-W version). Here $W_0$ is the principal branch of the Lambert function $W$. 
\item[ii)] $S(P,b) \leq g(\hat{s})$, where $\hat{s} = \sqrt{1 - c^{1/(2d)}}$ (elementary version). 
\item[iii)] 
For fixed $d$, as $c \to 0$, $S(P,b) \leq g(s_{*}) \to g(1) = 4$;\
\item[iv)] 
$$S(P,b) \leq 1 + 3 \sqrt{ \frac{L_c\ln((2ed)/L_c)}{d}}$$ when $d \geq 15 L_c$
 \item[v)] $$S(P,b) \leq 1 + 3 d^{-1/4} \sqrt{\ln(2e\sqrt{d})} \quad \to 1 \quad \mathrm{as} \quad d \to \infty$$  when $d \geq \max\{L_c^2, 169\}$.
 In particular, if $c\ge e^{-\sqrt{d}}$ and $d\ge 169$, then $S(P,b)\le 2.7169\ldots$
 \end{itemize}
\end{theorem}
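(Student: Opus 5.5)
The strategy is to reduce the bound on $S(P,b)$ to a lower bound on $|b|$, and then to get that lower bound from a Koebe-type distortion estimate that uses the capacity information carried by $c$. Since $b\in Z(P')\cap\partial D(P,1)$ and $P$ is Schlicht normalized, we have $|P(b)|=t(P)=1$ and $P'(0)=1$, so $S(P,b)=1/|b|$. By the discussion in Section~\ref{preliminaryConstructionsandStatementOfResults}, the branch $f$ of $P^{-1}$ fixing $0$ is a Schlicht function on $\mathbb{D}$ extending to a homeomorphism of $\overline{\mathbb{D}}$ onto $\overline{D(P,1)}$. Hence $b=f(w)$ with $|w|=1$, and $|b|\ge \operatorname{dist}(0,\partial f(\mathbb{D}))$. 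Classical Koebe gives only $|b|\ge 1/4$, that is $S\le 4$, so the gain must come from the size of $f(\mathbb{D})$.

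\textbf{Step 1 (capacity bound).} The filled lemniscate $\{|P|\le 1\}$ is the preimage of $\overline{\mathbb{D}}$ under a degree $d$ polynomial with leading coefficient of modulus $c$. Its logarithmic capacity is therefore $c^{-1/d}=e^{L_c/d}$. Since $D(P,1)$ is contained in this set, monotonicity of capacity gives $\operatorname{cap}(f(\mathbb{D}))\le e^{L_c/d}$. This is close to $1$ when $L_c\ll d$, and $1$ is the minimum possible value for a Schlicht image.

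\textbf{Step 2 (Cunningham).} Next I would invoke Cunningham's refined Koebe theorem for Schlicht $f$ with $\operatorname{cap} f(\mathbb{D})\le R$. I expect it to state that $\operatorname{dist}(0,\partial f(\mathbb{D}))\ge 1/g(s)$, where $s\in(0,1)$ is determined implicitly by $R$ through an extremal slit/two-arc configuration. The main obstacle is to match Cunningham's parametrization with $R=c^{-1/d}$ and to solve the resulting transcendental relation for $s$, showing that it is exactly $s_*=\sqrt{1-W_0(e\,c^{1/(2d)})}$. I would first write the capacity of the extremal domain as a function of $s$, set it equal to $c^{-1/d}$, and reduce the equation to the form $y e^{y}=e\,c^{1/(2d)}$ with $y=1-s^2$. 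Since $g$ is increasing on $(0,1)$, running from $g(0^+)=1$ to $g(1)=4$, any upper bound on $s$ yields an upper bound on $S(P,b)$; this gives (i).

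\textbf{Step 3 (ii and iii).} For (ii), put $x=c^{1/(2d)}\in(0,1]$ and $y=W_0(ex)$. Then
\[
y e^{y}=ex\ge xe^{x},
\]
so $y\ge x$, hence $s_*\le\hat s$ and $g(s_*)\le g(\hat s)$. For (iii), as $c\to0$ we have $x\to 0$, so $W_0(ex)\to 0$, $s_*\to1$ and $g(s_*)\to4$.

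\textbf{Step 4 (iv and v).} These are explicit estimates of $g(\hat s)$ for small $\hat s$. From $1-e^{-u}\le u$ we get
\[
\hat s^2=1-e^{-L_c/(2d)}\le \frac{L_c}{2d}.
\]
Then
\[
\ln g(s)=2\ln(1+s)+\frac{2s\ln(1/s)}{1+s}\le 2s+2s\ln(1/s).
\]
Since $s\ln(e/s)$ is increasing for small $s$, we may substitute $s=\sqrt{L_c/(2d)}$, which gives an exponent of order $\sqrt{L_c/d}\,\ln(2ed/L_c)$ up to constants. Finally I would use $e^u\le 1+u+u^2$ for $u$ small; the hypothesis $d\ge 15L_c$ guarantees the needed smallness. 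Absorbing constants should produce the stated $1+3\sqrt{L_c\ln(2ed/L_c)/d}$, with the exact placement of the square root to be checked carefully against the expansion. Part (v) follows by substituting $L_c\le\sqrt d$ and using the monotonicity of the bound in $L_c$. Evaluating at $d=169$ gives the numerical constant $2.7169\ldots$.
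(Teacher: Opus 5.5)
Steps 1–3 are essentially right, but Step 4 has a genuine gap: parts (iv) and (v) cannot be deduced from (ii), nor even from (i), whatever constants you choose. Put $u:=L_c/d$. Then $\hat s^2=1-e^{-u/2}\sim u/2$, so $g(\hat s)-1\sim 2\hat s\ln(1/\hat s)\sim \tfrac{1}{\sqrt2}\sqrt{u}\,\ln(1/u)$. The claimed bound, by contrast, has excess $3\sqrt{u\ln(2e/u)}$. The ratio of the two excesses grows like $\sqrt{\ln(1/u)}/(3\sqrt2)\to\infty$.

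Concretely, at $u=10^{-8}$ you get $g(\hat s)-1\approx 1.49\cdot 10^{-3}$, while $3\sqrt{u\ln(2e/u)}\approx 1.35\cdot 10^{-3}$. Such small $u$ are compatible with $d\ge 15L_c$: for example, the Schlicht normalization of $z^d-z$ has $u\approx (1+\ln d)/d$. The same failure occurs for $g(s_*)$, since $s_*^2\sim u/4$; it just sets in at smaller $u$. So the issue you flagged as ``placement of the square root'' is a real loss of a factor $\sqrt{\ln(1/u)}$, and no absorption of constants will repair it.

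The missing idea is that the bound on $s$ must come from Cunningham's full capacity relation, $R(s)=s^{-4s^2/(1-s^2)}(1-s^2)^{-2}\le c^{-1/d}$. You cannot pass through $\hat s$ or $s_*$. For small $s$, the factor $s^{-4s^2/(1-s^2)}$ is exactly what (ii) drops and what (i) weakens to $e^{2s^2}$. It contributes $\approx 4s^2\ln(1/s)$ to $\ln R$, which dominates the $\approx 2s^2$ coming from $(1-s^2)^{-2}$.

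The paper writes $s^2=e^{-\lambda}$ and uses $\ln R(\lambda)\ge 2e^{-\lambda}(1+\lambda)$. Then $\ln R\le u$ forces $\lambda\ge\lambda^*$, where $(1+\lambda^*)e^{-(1+\lambda^*)}=u/(2e)$; that is, $s^2\lesssim u/(2\ln(1/u))$ rather than $u/2$. Combining this with the same estimate $1-r(s)\le s(2+\lambda)=2s\ln(e/s)$ that you use for $\ln g$, and with $(y+1)^2\le 2y(y-\ln y)$ for $y=1+\lambda^*\ge 6$, gives $1-r\le\sqrt{u\ln(2e/u)}$. Then $1/r\le 1+3(1-r)$ whenever $1-r\le 2/3$.

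Three smaller points:
\begin{itemize}
\item In Step 2, $R(s)=c^{-1/d}$ does not reduce exactly to $ye^{y}=e\,c^{1/(2d)}$. One only has $R(s)\ge e^{2s^2}(1-s^2)^{-2}$, which gives the inequality $s\le s_*$. That inequality is all (i) needs.
\item Part (v) is not a formal consequence of (iv). For $169\le d<225$, the hypothesis $d\ge\max\{L_c^2,169\}$ does not imply $d\ge 15L_c$. The conditions $\ln C\le 2$, $y\ge 6$ and $1-r\le 2/3$ must therefore be checked directly under (v)'s hypothesis.
\item Your derivation of (ii) from (i), via $W_0(ex)\ge x$ for $x\in(0,1]$, is correct. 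It is a neat alternative to the paper's direct argument of dropping the exponential factor.
\end{itemize}
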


\medskip

Recall that to solve Smale's mean value conjecture (Conjecture 1), it suffices to only consider the Schlicht normalized polynomials. Theorem 1 (iv) and (v) show that for Schlicht normalized polynomials whose leading coefficient decays sufficiently slowly (as $d \rightarrow \infty$) the conjecture is asymptotically true. The below Figure \ref{regionOfLeadingCoefficients} illustrates  the values of $c$ for which the upper bound of Theorem 1 (v) is valid: these are the points $(d,c)$ (with $d \geq 169$) contained in the shaded blue region. 

\begin{figure}[H]
    \centering
    \includegraphics[width=\textwidth,height=0.24\textheight,keepaspectratio]{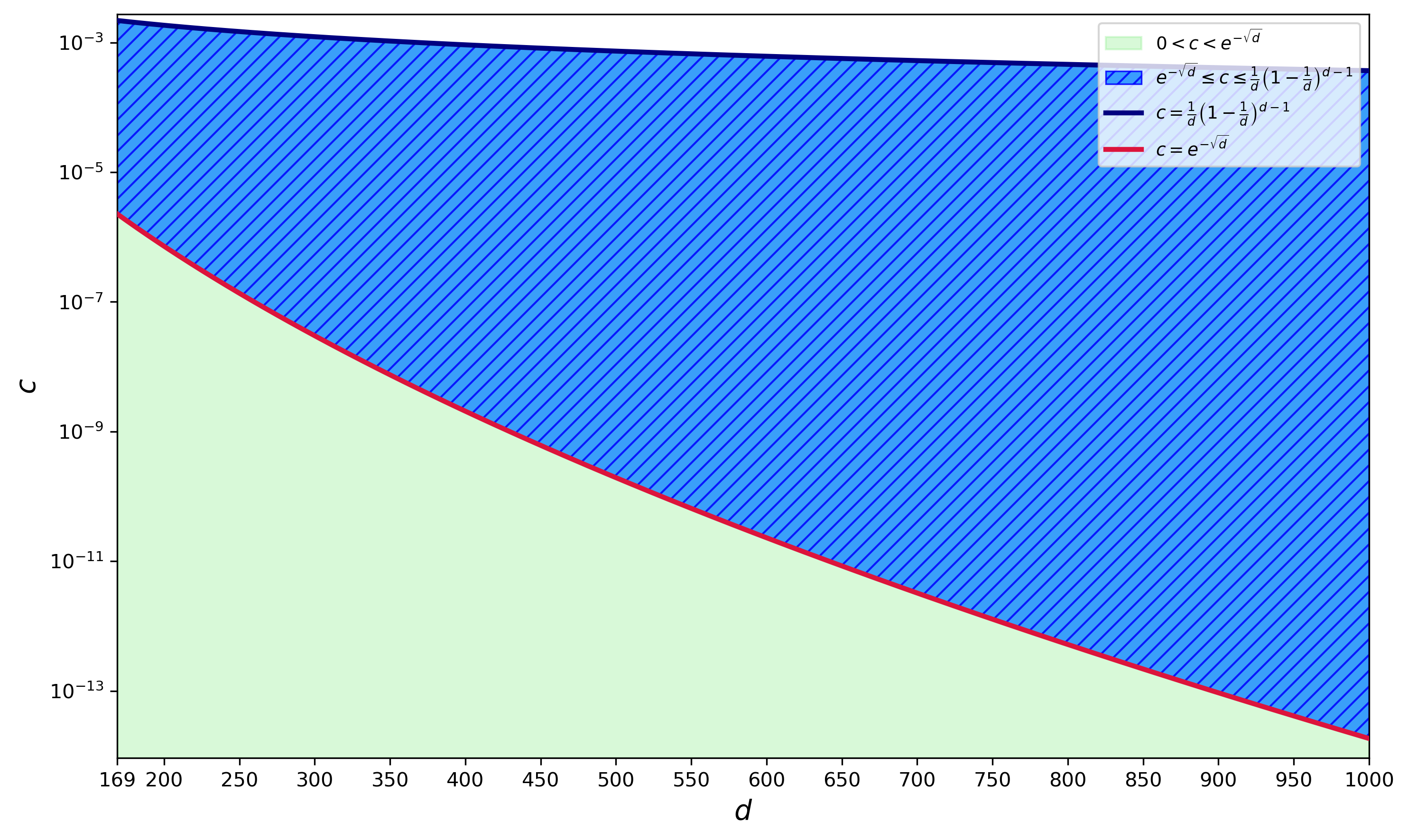}
    \caption{Admissible values of $c$ for Theorem 1 (v)}
    \label{regionOfLeadingCoefficients}
\end{figure}

\indent
To state the next theorem, we first recall the Markov type inequality of Eremenko ($\cite{Eremenko2007}$, Theorem 1) which says 
that if $P$ is a polynomial of degree $d$, and 
$E$ is any continuum $E\subset\C$, then
\begin{equation}\label{Markov}
\mathrm{cap}(E)\,\sup_{z\in E}|P'(z)| \;\le\; 2^{\frac{1}{d}-1}\,d^2\ \sup_{z\in E}|P(z)|.
\end{equation} 
Here $\mathrm{cap}(E)$ is the logarithmic capacity of $E$. If we further assume that $P(0)=0$, $t(P)=1=P'(0)$ and $\Omega = D(P,1)$ is the component of $P^{-1}(\mathbb{D})$ containing $0$, then we have  
\begin{equation}\label{EM}
|P'(0)|\mathrm{cap}(\overline{\Omega}) \le \sup_{z \in \overline{\Omega}} |P'(z)| \mathrm{cap}(\overline{\Omega}) \leq 2^{\frac{1}{d}} \frac{d^2}{2}
\end{equation}

Therefore, if we let $R:=\mathrm{cap}(\overline{\Omega})$ and $A_d:=2^{\frac{1}{d}} \frac{d^2}{2}$, then 
\begin{equation}\label{ieq:R}
   R\le A_d
\end{equation}

On the other hand, applying P\'{o}lya’s result (Proposition \ref{Polya}) to $f=P^{-1}$ restricted to $\mathbb{D}$, we have $1< R=\mathrm{cap}(\overline{\Omega})$. Therefore, $1<R\le A_d$. The following result says that when $R$ is close to the two ends of this range, we have $S(P)\le 3$.\\

\begin{theorem}\label{thm:2} Let $P$ be a polynomial of degree $d\ge 2$ normalized by $P(0)=0$, $P'(0)=1$ and $t(P)=1$. Let
$\Omega=D(P,1)$, $R=\mathrm{cap}(\overline{\Omega})$ and $A_d=2^{\frac{1}{d}}\frac{d^2}{2}$. Then, for any critical point $b \in Z(P') \cap \partial \Omega$,
\begin{equation}\label{Ad-R}
    S(P,b) \le \min\{4-\frac{e^2}{4R},\frac{A_d}{R}, 2+\log\!\Bigl(\frac{A_d}{R}\Bigr)\}.
\end{equation}
Hence, for any $R\in (1,1.8] \cup [\frac{A_d}{3}, A_d]$, we have 
$$
S(P)\le 3.
$$
\end{theorem}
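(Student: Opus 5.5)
Throughout, let $f=P^{-1}:\mathbb{D}\to\Omega=D(P,1)$ be the univalent branch fixing $0$. It is a Schlicht function and extends to a homeomorphism $\overline{\mathbb{D}}\to\overline{\Omega}$.

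For a critical point $b\in Z(P')\cap\partial\Omega$ we have $|P(b)|=t(P)=1$, so $S(P,b)=1/|b|$. Let $\delta:=\mathrm{dist}(0,\partial\Omega)$. Since $|b|\ge\delta$, all three bounds in (\ref{Ad-R}) will follow from lower bounds on $\delta$, namely
$$\tfrac1\delta\le 4-\tfrac{e^2}{4R},\qquad \tfrac1\delta\le \tfrac{A_d}{R},\qquad \tfrac1\delta\le 2+\log\tfrac{A_d}{R}.$$

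\textbf{First bound.} I would apply Cunningham's refined Koebe $\tfrac14$-theorem, mentioned in the abstract, to $f$. That theorem covers Schlicht functions whose image has bounded logarithmic capacity, and here $\mathrm{cap}(\overline{f(\mathbb{D})})=R$. I assume it yields $\delta\ge \bigl(4-\tfrac{e^2}{4R}\bigr)^{-1}$, or a bound that can be simplified to this one.

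\textbf{Second bound.} Set $M:=A_d/R$. By (\ref{EM}), Eremenko's inequality gives $|P'|\le M$ on $\overline{\Omega}$. Choose $q\in\partial\Omega$ with $|q|=\delta$. The segment $[0,q]$ lies in $\overline{\Omega}$, because the open disk $\mathbb{D}_\delta$ is contained in $\Omega$. Since $|P(q)|=1$,
$$1=|P(q)-P(0)|\le \int_{[0,q]}|P'|\,|dz|\le M\delta,$$
so $1/\delta\le A_d/R$.

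\textbf{Third bound.} I would refine the same integration along $[0,q]$, parametrized by arclength $s\in[0,\delta]$. Put $u(s)=|P(sq/\delta)|$. Koebe's estimate $(1-|w|^2)|f'(w)|\le 4\,\mathrm{dist}(f(w),\partial\Omega)$ is the wrong direction here. Instead I would use its companion: from $\mathrm{dist}(f(w),\partial\Omega)\le(1-|w|^2)|f'(w)|$ we get
$$|P'(z)|\le \frac{1-|P(z)|^2}{\mathrm{dist}(z,\partial\Omega)}\le \frac{2(1-u)}{\delta-s}.$$
Hence $u'\le \min\{M,\,2(1-u)/(\delta-s)\}$. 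I would split the interval at $s_0=\delta-\kappa/M$, with $\kappa$ to be chosen:
\begin{itemize}
\item on $[0,s_0]$, use $u'\le M$;
\item on $[s_0,\delta]$, use the second bound, which integrates to a $\log$ of the ratio of distances to $\partial\Omega$.
\end{itemize}
Choosing the split point optimally should give an inequality of the form $1\le \delta\,(2+\log M)$. Calibrating the constants so that the result is exactly $2+\log(A_d/R)$ is the step I expect to be the main obstacle. If the Koebe-type pointwise bound above is too lossy, the first thing I would try instead is Schwarz--Pick applied to $P$ on the disk $\mathbb{D}(z,\delta-s)$.

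\textbf{The final claim.} For $R\in(1,1.8]$, the first bound gives $S(P,b)\le 4-e^2/7.2<3$, since $e^2/7.2\approx 1.03>1$. For $R\in[A_d/3,A_d]$, the second bound gives $S(P,b)\le A_d/R\le 3$. In both cases, taking the minimum over $b\in Z(P')\cap\partial\Omega$ (which is nonempty) yields $S(P)\le 3$.
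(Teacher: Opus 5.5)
Your second bound and the final deduction for $R\in(1,1.8]\cup[A_d/3,A_d]$ match the paper. The other two bounds have genuine gaps.

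\textbf{The third bound.} Your plan for $1/\delta\le 2+\log(A_d/R)$ cannot succeed. The two constraints you impose along $[0,q]$ are $u'\le M$ and $u'\le 2(1-u)/(\delta-s)$. Both are satisfied by $u(s)=Ms$ with $\delta=1/M$, since then $2(1-Ms)/(1/M-s)=2M\ge M$. The same holds with $1-u^2$ in place of $2(1-u)$. Schwarz--Pick on $\Delta(z;\delta-s)$ gives exactly that pointwise bound, so your fallback changes nothing. Hence no choice of split point yields more than $\delta\ge 1/M$, which is just your second bound. But $2+\log M<M$ once $M$ exceeds about $3.15$, so the third bound is strictly stronger exactly where it matters. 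If you drop the factor $1-u$ to integrate, $\int_{s_0}^{\delta}2\,ds/(\delta-s)$ diverges, so the endpoint piece cannot even be made finite.

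The missing idea is to use Eremenko's bound on all of $\overline{\Omega}$ rather than along one segment. From $|P'|\le M$ on $\overline{\Omega}$ you get $|f'|\ge 1/M$ on all of $\mathbb{D}$. So $\phi=\log f'+\log M$ is holomorphic with $\mathrm{Re}\,\phi\ge 0$ and $\phi(0)=\log M$. Carath\'eodory's lemma $|\phi'(0)|\le 2\,\mathrm{Re}\,\phi(0)$ gives $|a_2|=|f''(0)|/2\le \log(A_d/R)$. Then use the argument from the proof of Koebe's theorem: for an omitted value $w_0$, the function $w_0f/(w_0-f)$ is Schlicht with second coefficient $a_2+1/w_0$, so $1/|w_0|\le 2+|a_2|$. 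This gives $1/\delta\le 2+\log(A_d/R)$.

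\textbf{The first bound.} This is not proved either. Writing that you assume Cunningham's theorem yields $\delta\ge(4-e^2/(4R))^{-1}$ skips exactly the step that needs work, and this bound carries the case $R\in(1,1.8]$. Cunningham's theorem gives $r$ and $R$ only parametrically in $A$, so $4-e^2/(4R)$ must be extracted from that parametrization. The paper sets $\sqrt{A}=\coth\rho$. Then $r=X/4$ and $R=XY/16$, with $X=\sech^2\rho\,e^{2\rho\tanh\rho}$ and $Y=\csch^2\rho\,e^{2\rho\coth\rho}$. This gives the exact identity $1/r=4-Z/(4R)$, where $Z=Y(X-1)$. One then checks three things:
\begin{enumerate}
\item $Z$ is increasing in $\rho$, which reduces to $e^{2\rho\tanh\rho}\le\cosh^4\rho$;
\item $Z\to e^2$ as $\rho\to 0^+$;
\item $\rho\mapsto R(\rho)$ decreases strictly from $\infty$ to $1$, so every $R>1$ is covered.
\end{enumerate}
Citing Duren--Schiffer's sharp estimate $|a_2|\le 2-e^2/(4R)$ instead would only handle $R>e^2\pi^2/64\approx 1.14$, which leaves part of $(1,1.8]$ uncovered.
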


\begin{corollary}\label{cor:1}
Suppose $P$ is a polynomial of degree $d \geq 2$, satisfying  $P(0) = 0$ and $P'(0) \neq 0$ with critical points $b_1,...,b_{d-1}$. For any critical point $b \in Z(P') \cap \partial D(P,t(P))$, the below inequality holds:

$$S(P,b) + \frac{e^2}{4} (\frac{S(P,b)}{M(P,b)})^{\frac{d-1}{d}}  (\frac{2d-1}{d})^{\frac{1}{d}}  < 4 $$
Here $$M(P,b) := (\prod_{i=1}^{d-1} \frac{|b_i|}{|b|})^{\frac{1}{d-1}}$$
In particular, $$|\frac{P(b)}{b P'(0)}| < 3$$ whenever  $$(\prod_{j=1}^{d-1} |\frac{b_j}{b}|)^{\frac{1}{d-1}} < \frac{3 e^2}{4} \approx 5.5$$ and $b \in Z(P') \cap \partial D(P,t(P))$

\end{corollary}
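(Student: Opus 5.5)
The plan is to deduce the corollary from the first term of the bound in Theorem \ref{thm:2}, namely $S(P,b)\le 4-\frac{e^2}{4R}$ with $R=\mathrm{cap}(\overline{\Omega})$. It then suffices to bound $R$ from above in terms of $S(P,b)$, $M(P,b)$ and $d$. Both $S(P,b)$ and $M(P,b)$ are invariant under $P\mapsto \alpha P(\beta z)$, since $M$ involves only ratios of critical points. So I first replace $P$ by its Schlicht normalization and assume $P(0)=0$, $P'(0)=1=t(P)$. Then $\Omega=D(P,1)$, and $b\in\partial\Omega\cap Z(P')$.

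\textbf{Step 1: express $c$ and $|b|$ through $S$ and $M$.} Write $P(z)=a_dz^d+\cdots+z$ with $|a_d|=c$. Then $P'(z)=d a_d\prod_{i=1}^{d-1}(z-b_i)$, and $P'(0)=1$ gives $d\,c\prod_i|b_i|=1$. Since $b\in\partial\Omega$ we have $|P(b)|=1$, hence $S(P,b)=1/|b|$. Also $\prod_i|b_i|=M(P,b)^{d-1}|b|^{d-1}$. Together these give
\[
\frac1c=d\,\Bigl(\frac{M(P,b)}{S(P,b)}\Bigr)^{d-1}.
\]

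\textbf{Step 2: bound the capacity.} The crude bound is $\overline\Omega\subset\{|P|\le1\}$, whose capacity is $c^{-1/d}$. This gives $R\le d^{1/d}(M/S)^{(d-1)/d}$, which is weaker than required by the factor $\bigl(d/(2d-1)\bigr)^{1/d}$ versus $d^{1/d}$. What the statement needs is
\[
R^d\le \frac{1}{(2d-1)\,c},
\]
and this must use the fact that $P$ is univalent on $\Omega$. I would obtain it from the extremal problem for the capacity of polynomial lemniscates on which $P$ is univalent (the ingredient mentioned in the abstract, proved alongside Theorem \ref{thm:3}).

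My first attempt at this estimate would go as follows. Let $f=P^{-1}:\mathbb{D}\to\Omega$, and let $\phi$ be the exterior conformal map of $\overline{\Omega}$, so that $\phi(w)\sim Rw$. Consider $g=P\circ\phi$ on $|w|>1$. It has modulus $1$ on $|w|=1$, has a pole of order $d$ at $\infty$ with leading coefficient $a_dR^d$, and has $d-1$ further zeros in $|w|>1$, counted with multiplicity (the zeros of $P$ outside $\overline\Omega$). A Blaschke-type factorization together with a subharmonicity or Grunsky-type estimate should then give $|a_d|R^d(2d-1)\le 1$. Checking that the constant is exactly $2d-1$, and identifying the extremal case, is the step I expect to be the main obstacle. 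If a direct argument fails, I would quote the paper's capacity lemma in this form.

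\textbf{Step 3: conclude.} Combining Steps 1 and 2 gives
\[
\frac1R\ge\Bigl(\frac{S}{M}\Bigr)^{\frac{d-1}{d}}\Bigl(\frac{2d-1}{d}\Bigr)^{\frac1d}.
\]
Inserting this into $S\le 4-\frac{e^2}{4R}$ yields the displayed inequality. Strictness should come from strictness in either the capacity bound or Theorem \ref{thm:2}, both of which are strict for nonlinear $P$, as in P\'olya's inequality $R>1$.

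For the final claim, use that $(2d-1)/d\ge1$ and, as long as $S\ge3$, that $S/M\ge 3/M$. Suppose $S\ge3$. Then $\frac{e^2}{4}(S/M)^{(d-1)/d}<1$. If in addition $M<3e^2/4$, then $S/M>4/e^2$, so $(S/M)^{(d-1)/d}$ exceeds $\min\{S/M,1\}$ in a way that forces the left side of the inequality to be at least $4$. This contradicts the inequality, so $S(P,b)<3$. The exponent $(d-1)/d$ needs only routine case-checking here.
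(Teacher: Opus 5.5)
Your proposal follows the paper's proof: Schlicht-normalize, use $d\,c\prod_i|b_i|=1$ and $S(P,b)=1/|b|$ to get $c^{-1}=d(M/S)^{d-1}$, quote Theorem~\ref{thm:5} for $R^d\le 1/((2d-1)c)$, and insert this into $S(P,b)\le 4-\frac{e^2}{4R}$ from Theorem~\ref{thm:2}. Theorem~\ref{thm:5} is proved as you sketched, writing $g=P\circ\phi=\lambda w^d/B(w)$; the missing step in your sketch is that $|B'|\le d$ on $\partial\mathbb{D}$, because $g$ preserves orientation there, combined with $\|B'\|_{\partial\mathbb{D}}\ge (d-1)\frac{1+|B(0)|}{1-|B(0)|}$ from the Cowen--Pommerenke identity, which yields the constant $2d-1$; your argument for the ``in particular'' clause is also correct. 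One correction on strictness: the capacity bound is \emph{not} strict for nonlinear $P$, since Theorem~\ref{thm:5} is an equality for $z^d-z$, so the strict inequality must come from Theorem~\ref{thm:2}; its proof gives $S(P,b)\le 1/r<4-\frac{e^2}{4R}$ because $Z(\rho)$ is strictly increasing with limit $e^2$ as $\rho\to 0^+$, which Pólya's $R>1$ alone does not supply.
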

\begin{remark}
For $P$ a degree $d$ Schlicht normalized polynomial with leading coefficient of modulus $c$, in view of the identity (proved in Section \ref{sec2.3})
\begin{equation} \label{leadingCoefficientIdentity} c^{-\frac{1}{d}} = \mathrm{cap}(P^{-1}(\overline{\mathbb{D}})) = d^{\frac{1}{d}}(\prod_{i} |b_i|)^{\frac{1}{d}} = d^{\frac{1}{d}} (\frac{M(P,b)}{S(P,b)})^{\frac{d-1}{d}} \end{equation} Corollary \ref{cor:1} can be rewritten in the form  $$S(P,b) + \frac{e^2}{4} (2d-1)^{\frac{1}{d}}c^{\frac{1}{d}} < 4 $$ or equivalently $$\frac{e^2}{4} (2d-1)^{\frac{1}{d}} \frac{1}{4 - S(P,b)} < c^{-\frac{1}{d}}$$ 

Rewriting this in terms of $c$, we find that $$c < \frac{1}{2d-1}(\frac{4}{e^2} \lbrace 4 - S(P,b)\rbrace)^{d}$$

Thus if $$c \geq \frac{1}{2d-1}(\frac{4}{e^2} A)^{d} $$ then $$S(P,b) \leq 4 - A$$

In particular if $$c \geq \frac{1}{2d-1} (\frac{19}{20})^{d}  $$ then $$S(P,b) \leq 2.245 \ldots$$
\end{remark}

Corollary \ref{cor:1} shows that for a large class of polynomials, for which there exists a ``representative'' critical point on $\partial D(P,t(P))$, i.e a critical point with log-norm close to the mean of the log-norms of the critical points of $P$, the (asymptotically) best known upper bound of $4$ for Conjecture 1 can be replaced by 3. This refines Corollary 2 of \cite{dubinin2012some}.

The third main result is the following improved Eremenko's Markov type inequality which will be used to study the dual mean value conjecture.

\begin{restatable}{theorem}{improvedMarkov} \label{thm:3}
 Let $\mathcal{P}_{d}$ denote the class of polynomials $P$ of degrees between $2$ and $d \geq 2$ inclusive, satisfying $P(0) = 0$, $P'(0) \neq 0$. Then the following inequality holds for any $P \in \mathcal{P}_{d}$.

$$\sup_{z \in \overline{D(P,t(P))}} |P'(z)| \mathrm{cap}(\overline{D(P,t(P))}) \leq (\frac{2}{2d-1})^{\frac{1}{d}} \frac{d^2}{2} t(P)$$ 

The RHS of the above inequality is always strictly smaller than $$\frac{d^2}{2} t(P)$$
\end{restatable}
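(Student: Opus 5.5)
Throughout we normalize, as the paper allows, so that $t(P)=1$; then $\Omega:=D(P,1)$ and the statement becomes $\sup_{\overline{\Omega}}|P'|\,\mathrm{cap}(\overline{\Omega})\le (\frac{2}{2d-1})^{1/d}\frac{d^2}{2}$. The strategy is to compare $\overline{\Omega}$ with the full filled lemniscate $E:=P^{-1}(\overline{\mathbb{D}})$, to which Eremenko's inequality (\ref{Markov}) applies, and to gain the factor by controlling the ratio $\mathrm{cap}(\overline\Omega)/\mathrm{cap}(E)$. Two facts are standard:
\begin{itemize}
\item $\mathrm{cap}(E)=|a_d|^{-1/d}$, since the Green function of the complement of $E$ is $\frac1d\log|P|$;
\item $\sup_{\overline\Omega}|P'|\le\sup_{E}|P'|$, since $\overline\Omega\subset E$.
\end{itemize}
If $E$ is connected, (\ref{Markov}) gives $\mathrm{cap}(E)\sup_E|P'|\le 2^{1/d}\frac{d^2}{2}$. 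The desired inequality would then follow from the purely geometric estimate
\begin{equation*}
\mathrm{cap}(\overline{\Omega})^d\le \frac{1}{2d-1}\,\mathrm{cap}(E)^d ,
\end{equation*}
valid whenever $P$ is univalent on the component $\Omega$ of $E$. The final remark of the statement is elementary: $(\frac{2}{2d-1})^{1/d}<1$ for $d\ge 2$.

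The argument therefore splits into two extremal problems. The first is to reduce to the case where $E$ is connected, i.e.\ all critical values of $P$ lie in $\overline{\mathbb{D}}$, so that (\ref{Markov}) applies to $E$ itself. Here I would follow the Eremenko--Hayman quasiconformal deformation method. Consider the functional $\Phi(P)=\sup_{\overline\Omega}|P'|\,\mathrm{cap}(\overline\Omega)$ on $\mathcal{P}_d$ with $t(P)=1$. It is scale invariant, and P\'olya's theorem (Proposition \ref{Polya}) together with (\ref{ieq:R}) bounds it, so a normalized compactness argument gives a maximizer. Suppose some critical value of the maximizer has modulus $>1$. Then deform $P$ by a quasiconformal change of the critical values, pulling that value radially toward the unit circle. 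Composing with a qc map that is conformal on $\mathbb{D}$ leaves the pair $(\Omega,P|_\Omega)$ conformally unchanged up to a Möbius normalization. One must check that this deformation does not decrease $\Phi$ while strictly changing the outer geometry. This shows the extremal lemniscate $E$ is connected, with $P$ univalent on $\Omega$ and $\partial\Omega$ containing critical points. Note that univalence on $\Omega$ is preserved because the deformation is conformal on $\mathbb{D}$.

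The second, and in my view main, obstacle is the capacity estimate $\mathrm{cap}(\overline\Omega)^d\le\mathrm{cap}(E)^d/(2d-1)$. This is the \lq\lq extremal problem for the logarithmic capacity of polynomial lemniscates'' mentioned in the abstract. I would first check the candidate extremal $P(z)=(z+1)^d-1$ suitably normalized, where $E$ is connected and $\Omega$ is a single petal, and verify equality or near-equality there. Then I would attack the general case through the Green function.
\begin{itemize}
\item Since $P$ maps $\Omega$ univalently onto $\mathbb{D}$, the harmonic measure of $\partial\Omega$ seen from $\infty$ in $\C\setminus E$ equals the $\mu_E$-mass of $\partial\Omega\cap\partial E$.
\item The equilibrium measure of $E$ is $\frac1d$ times the pullback of arclength under $P$, so this mass is at most $1/d$ of the total.
\item Comparing Green functions, $g_{\C\setminus\overline\Omega}\ge g_{\C\setminus E}$, and using subordination / Dubinin-type symmetrization, should give $\log\mathrm{cap}(E)-\log\mathrm{cap}(\overline\Omega)\ge \frac{1}{d}\log(2d-1)$.
\end{itemize}
If this direct route fails, I would instead apply the qc-deformation extremality once more, now directly to the ratio $\mathrm{cap}(\overline\Omega)/|a_d|^{-1/d}$. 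This pushes all critical points onto $\partial\Omega$, where explicit computation with the polynomials $(z+1)^d-1$ should close the argument.
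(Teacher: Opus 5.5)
Your architecture matches the paper's: normalize $t(P)=1$, take an extremal polynomial, and use the Eremenko--Hayman deformation to put every nonzero critical value on $\partial\mathbb{D}$, so that $E=Q^{-1}(\overline{\mathbb{D}})$ is a continuum. Then apply Eremenko's inequality on $E$ and pay for passing from $E$ to $\overline{\Omega}$ with $\mathrm{cap}(\overline{\Omega})^d\le \mathrm{cap}(E)^d/(2d-1)$.

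\textbf{Main gap: the capacity bound.} That bound (Theorem~\ref{thm:5}), which you rightly call the main obstacle, is left unproved, and your sketch does not reach it. Monotonicity of Green functions only gives ratio $\le 1$. The identity $\mu_E(\partial\Omega)=1/d$ is an averaged statement and cannot produce the constant $2d-1$.

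Your candidate extremal is also wrong. For $(z+1)^d-1$ one has $\mathrm{cap}(E)=1$, while $\overline{\Omega}$ is a petal inside a sector of radius $2^{1/d}$ and opening $\pi/d$. So the ratio tends to $1/4$, far below $(2d-1)^{-1/d}\to 1$. The extremal is $z^d-z$; the two coincide up to rescaling only when $d=2$.

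The missing idea is the following.
\begin{itemize}
\item Let $f$ map $\Delta$ conformally onto $\overline{\C}\setminus\overline{\Omega}$ with $f(w)\sim \mathrm{cap}(\overline{\Omega})\,w$. By reflection, $P\circ f=\eta\, w^d/B(w)$ with $|\eta|=1$ and $B$ a Blaschke product of degree $d-1$.
\item Comparing leading terms gives $|B(0)|=|a_d|\,\mathrm{cap}(\overline{\Omega})^d=\bigl(\mathrm{cap}(\overline{\Omega})/\mathrm{cap}(E)\bigr)^d$.
\item Since $P\circ f$ is an orientation-preserving homeomorphism of $\partial\mathbb{D}$, one gets the \emph{pointwise} bound $|B'|\le d$ on $\partial\mathbb{D}$.
\item The Cowen--Pommerenke identity is $\sum_j 1/|B'(\zeta_j)|=\mathrm{Re}\,\frac{\xi+B(0)}{\xi-B(0)}$, summed over the $d-1$ solutions of $B(\zeta_j)=\xi$. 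Taken at $\xi=-B(0)/|B(0)|$, it gives $\|B'\|_{\partial\mathbb{D}}\ge (d-1)\frac{1+|B(0)|}{1-|B(0)|}$.
\item Together these force $|B(0)|\le 1/(2d-1)$.
\end{itemize}
This holds for every $P$; connectedness of $E$ is needed only for Eremenko's inequality.

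\textbf{The deformation step lacks its mechanism.} It is not true that a deformation conformal over $\mathbb{D}$ leaves $(\Omega,P|_{\Omega})$ unchanged up to a M\"obius map. The conjugating map $\phi_\lambda$ is conformal on $\Omega$ but not M\"obius, so $\mathrm{cap}(\overline{D(P_\lambda,1)})$ and $\|P_\lambda'\|$ genuinely vary with $\lambda$. Nothing in your sketch shows that $\Phi$ does not drop.

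The paper uses that $\lambda\mapsto\log\|P_\lambda'\|_{\partial D(P_\lambda,1)}$ and $\lambda\mapsto\log\mathrm{cap}(\partial D(P_\lambda,1))$ are subharmonic, as in Lemmas 2 and 3 of \cite{Eremenko2007}. Hence $\log\Phi(P_\lambda)$, maximal at $\lambda=0$, is constant, and a limiting argument then moves the critical values to the circle. This places critical \emph{values} on $\partial\mathbb{D}$, i.e.\ critical points on $\partial E$, not on $\partial\Omega$. So your fallback does not reduce to an explicit computation either.

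\textbf{Smaller gaps.} Existence of a maximizer needs continuity of $\mathrm{cap}(\overline{D(P_n,1)})$ along convergent Schlicht-normalized sequences (Lemma~\ref{lem:10}, Corollary~\ref{cor:4}). This fails for general Schlicht functions because of pinching. Also, the limit may have degree $m<d$; the paper absorbs this using the monotonicity of $(2m-1)^{1/m}$ and $2^{1/m}m^2$.
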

Theorem \ref{thm:3} gives an improvement on the Markov inequality of Eremenko (\cite{Eremenko2007}, Theorem 1) for the regions $D(P,t(P))$. As a corollary, we obtain a corresponding improvement on the best known lower bound for the dual mean value conjecture.

\begin{restatable}{corollary}{DMVC}\label{cor:2}

Let $d\ge 2$ and $P\in \mathcal{P}_{d}$, $b \in Z(P') \cap \partial D(P,t(P))$. Then: $$S(P,b) \geq S(P_0,b_0) \geq \frac{(d - \frac{1}{2})^{\frac{1}{d}}}{d^2}$$

Here $P_0$ is an extremal polynomial (for $\inf_{P \in \mathcal{P}_{d}} L(P)$) of degree at most $d$ and of degree at least $2$, and $b_0 \in \partial D(P_0,t(P_0)) \cap Z(P_0')$. Moreover $P_0$ may be chosen to satisfy $|P_0(b)| = |P_0(b')|$ for any two points $b,b' \in Z(P_0') \setminus Z(P_0)$, that is, all of $P_0$'s non-zero critical values have the same modulus.

In particular, for any polynomial $P$ of degree $d \geq 2$ satisfying $P(0) = 0$ and $P'(0) \neq 0$ the below inequality holds.

$$D(P) \geq \frac{ (d - \frac{1}{2})^{\frac{1}{d}}}{d^2}$$
\end{restatable}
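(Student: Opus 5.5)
The plan is to deduce Corollary \ref{cor:2} from Theorem \ref{thm:3} together with a lower bound for $|b|$ in terms of the capacity of $\overline{D(P,t(P))}$. Since $S(P,b)$ is invariant under $P\mapsto \alpha P(\beta z)$, we may assume $P$ is Schlicht normalized: $P(0)=0$, $P'(0)=1$ and $t(P)=1$. Then $|P(b)|=1$ for every $b\in Z(P')\cap\partial D(P,1)$, so
\[
S(P,b)=\frac{1}{|b|}.
\]
Everything therefore reduces to an upper bound on $|b|$ for critical points on the boundary of the maximal univalent lemniscate.

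\textbf{Step 1: bounding $|b|$ by the capacity.} Let $f=P^{-1}:\mathbb{D}\to\Omega=D(P,1)$ be the Schlicht branch, extended homeomorphically to $\overline{\mathbb{D}}$. Then $b=f(\zeta)$ for some $|\zeta|=1$. I would use the bound
\[
|f(\zeta)|\le \mathrm{cap}(\overline{\Omega})\sup_{\overline\Omega}|P'| .
\]
To justify it, I would first compare $f$ with the exterior Riemann map of $\overline\Omega$, using P\'olya's result (Proposition \ref{Polya}) and subordination. A more direct route is the estimate $|b|\le \mathrm{diam}(\overline\Omega)\le 4\,\mathrm{cap}(\overline\Omega)$, but that loses a factor of $4$.

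The cleaner route goes through the derivative. Integrate $f'=1/P'(f)$ along the radius $[0,\zeta]$. That gives $|b|\le\int_0^1|f'(r\zeta)|\,dr$, which is bounded by the reciprocal of a lower bound on $|P'|$, the wrong direction. So instead I would argue via the extremal problem $\inf_{P\in\mathcal{P}_d}L(P)$ mentioned in the statement. Here I take $L(P)$ to be the normalized functional $\min_{b}S(P,b)$ over boundary critical points. By compactness of the Schlicht normalized class of bounded degree (the coefficients are bounded since $c<1$, and $t(P)=1$ is preserved under limits), an extremal $P_0$ exists, with degree between $2$ and $d$.

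\textbf{Step 2: properties of the extremal polynomial.} I would show that $P_0$ can be taken with all nonzero critical values of equal modulus. The tool is a variational, quasiconformal deformation argument in the spirit of Eremenko–Hayman: if some critical value had smaller modulus, moving it outward decreases $S$ or keeps it fixed while preserving $t=1$. For such $P_0$, $P_0^{-1}(\overline{\mathbb{D}})$ is connected, and its capacity is $|a_d|^{-1/d}$. Combined with identity \eqref{leadingCoefficientIdentity}, this gives
\[
\mathrm{cap}(\overline{\Omega})^d=d\prod_i|b_i| .
\]
In the equal-modulus case, $\overline{D(P_0,1)}$ is essentially the whole filled lemniscate.

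\textbf{Step 3: combining with Theorem \ref{thm:3}.} The aim is a product-type relation of the form $|b_0|\le \mathrm{cap}\cdot\sup|P_0'|$, obtained from Bernstein-type considerations. Applying Theorem \ref{thm:3} then gives
\[
|b_0|\le \left(\frac{2}{2d-1}\right)^{1/d}\frac{d^2}{2}=\frac{d^2}{(d-\frac12)^{1/d}},
\]
which is exactly the claimed bound $S(P_0,b_0)\ge (d-\frac12)^{1/d}/d^2$. The final statement $D(P)\ge S(P,b)$ is immediate, since $Z(P')\cap\partial D(P,t(P))\neq\emptyset$.

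\textbf{Main obstacle.} The hardest step is the inequality $|b|\le\mathrm{cap}(\overline\Omega)\sup_{\overline\Omega}|P'|$ needed in Step 3 (equivalently, in Step 1). I would first try to prove it via Dubinin's sharp finite-increment theorem applied on the extremal configuration from Step 2.
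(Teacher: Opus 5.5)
\textbf{There is a genuine gap, at exactly the step you flag as the main obstacle.} The inequality
\[
|b|\le \mathrm{cap}(\overline{\Omega})\sup_{\overline{\Omega}}|P'|\qquad (P'(0)=1=t(P))
\]
is not merely unproved; it is false.

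Take $d=2$ and $P(z)=z-z^2/4=1-(z-2)^2/4$. This polynomial is Schlicht normalized, with critical point $b=2$ and $P(2)=1$, so $S(P,b)=1/2$. Write $z=2+2u$. Then $\overline{D(P,1)}$ is the image of the left lobe of $\{|u^2-1|\le 1\}$, and $P'(z)=-u$, so $\sup_{\overline{D(P,1)}}|P'|=\sqrt2$. Since $P$ is a rescaling of $z^2-z$, the equality case of Theorem~\ref{thm:5} gives $\mathrm{cap}(\overline{D(P,1)})=3^{-1/2}\,\mathrm{cap}(P^{-1}(\overline{\mathbb{D}}))=2/\sqrt3$. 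Hence $\mathrm{cap}\cdot\sup|P'|=2\sqrt{2/3}\approx 1.63<2=|b|$.

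This is the equality case of Theorem~\ref{thm:3} at $d=2$. Moreover, every polynomial in $\mathcal{P}_2$ is a rescaling of this one, so it is extremal for $L$. Restricting to the extremal configuration of Step 2 therefore cannot rescue the inequality.

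You did not notice because of an arithmetic slip in Step 3: $\bigl(\tfrac{2}{2d-1}\bigr)^{1/d}\tfrac{d^2}{2}=\tfrac{d^2}{2(d-\frac12)^{1/d}}$, not $\tfrac{d^2}{(d-\frac12)^{1/d}}$. Granting your inequality would give twice the stated bound, which already contradicts $S=1/2$ at $d=2$.

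Step 2 also has two errors. First, $\overline{D(P_0,1)}$ is never essentially the whole filled lemniscate. $P_0$ is univalent on it, and Theorem~\ref{thm:5} shows its capacity is at most $(2m-1)^{-1/m}$ times that of $P_0^{-1}(\overline{\mathbb{D}})$; that gap is exactly where Theorem~\ref{thm:3} improves on Eremenko. Second, identity \eqref{leadingCoefficientIdentity} concerns $\mathrm{cap}(P^{-1}(\overline{\mathbb{D}}))$, not $\mathrm{cap}(\overline{\Omega})$.

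\textbf{The missing idea.} Your crude route is valid: $|b|\le 4\,\mathrm{cap}(\overline\Omega)$ and $1=|P'(0)|\le\sup|P'|$. Combined with Theorem~\ref{thm:3}, it gives only $(d-\tfrac12)^{1/d}/(2d^2)$, a loss of a factor $2$, not $4$. The paper recovers that factor $2$ by recentering and doubling:
\begin{itemize}
\item Normalize $b=1$ and $P(1)=1=t(P)$. Let $f$ be the inverse branch, $\gamma=f([0,1])$, $w_0=f(\tfrac12)$, and set $Q(z)=2P(z+w_0)-1$.
\item Then $Q\in\mathcal{P}_d$ and $t(Q)=1$. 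Indeed, $Q$ maps $f(\Delta(\tfrac12;\tfrac12))-w_0$ univalently onto $\mathbb{D}$, and $1-w_0$ is a critical point on its boundary.
\item Also $\gamma-w_0\subset\overline{D(Q,1)}$ and $Q'(-w_0)=2P'(0)$.
\item Theorem~\ref{thm:3} for $Q$, together with $\mathrm{cap}(\gamma)\ge\tfrac14$, gives $\tfrac14\cdot 2|P'(0)|\le\bigl(\tfrac{2}{2d-1}\bigr)^{1/d}\tfrac{d^2}{2}$, that is, $S(P,1)=1/|P'(0)|\ge (d-\tfrac12)^{1/d}/d^2$.
\end{itemize}

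This holds for every boundary critical point directly, with no Bernstein or finite-increment input. The extremal $P_0$ is needed only for $S(P,b)\ge S(P_0,b_0)$ and for the equal-modulus clause. There the correct variational statement is that $\lambda\mapsto\max_{b}|\phi_\lambda(b)|$ (with $\phi_\lambda$ the quasiconformal conjugacy from the proof of Theorem~\ref{thm:9}) is subharmonic and maximal at $\lambda=0$, hence constant. Compactness comes from Koebe plus Montel and Lemma~\ref{lem:10}, not from $c<1$.
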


Corollary \ref{cor:2} improves upon the best known lower bound of $\frac{1}{d^2}$ for Conjecture 2, by a factor $(d - \frac{1}{2})^{\frac{1}{d}}$ for $d \geq 2$. When $d =8$ (the smallest degree for which the dual mean value conjecture is still open), this factor is approximately $1.286$ ; in all cases it is strictly greater than $1$. As $d \rightarrow \infty$, the factor $$(d - \frac{1}{2})^{\frac{1}{d}} \rightarrow 1$$ so we do not obtain an asymptotic improvement on Dubinin's lower bound of $$\frac{1}{d^2}$$ On the other hand, the proof of Corollary \ref{cor:2} actually produces the following sharper result (see also Remark \ref{asymptoticFactorInDualConjecture}).

\begin{restatable}{corollary}{SharperDMVC} \label{SharperDualIneq}
For any polynomial $P \in \mathcal{P}_{d} ; d \geq 2$ and $b \in Z(P') \cap \partial D(P,t(P))$, letting $$\gamma$$ denote the Jordan arc from $0$ to $b$ in $\overline{ D(P,t(P))}$ for which $P(\gamma) = [0, P(b)]$ the below inequality holds
$$
S(P,b) \geq \frac{\mathrm{cap}(\gamma)}{\mathrm{cap}[0,b]}\frac{\left(d-\frac{1}{2}\right)^{\frac{1}{d}}}{d^2}
\geq \frac{\left(d - \frac{1}{2}\right)^{\frac{1}{d}}}{d^2}
$$

\end{restatable}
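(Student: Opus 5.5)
We argue from Theorem \ref{thm:3}, using the univalent inverse branch to relate $\gamma$ to the lemniscate $\overline{D(P,t(P))}$. Since $S(P,b)$ is invariant under $P\mapsto \alpha P(\beta z)$, and the quantities $\mathrm{cap}(\gamma)/\mathrm{cap}[0,b]$ and $t(P)$ transform compatibly, we may assume $P$ is Schlicht normalized: $P(0)=0$, $P'(0)=1$, $t(P)=1$. Let $f:\overline{\mathbb{D}}\to\overline{\Omega}$, $\Omega=D(P,1)$, be the continuous extension of the univalent branch of $P^{-1}$ with $f(0)=0$. Then $\gamma=f([0,P(b)])$, $|P(b)|=1$, and $S(P,b)=1/|b|$. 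Since $\mathrm{cap}[0,b]=|b|/4$, the inequality to prove is
\begin{equation*}
\frac{1}{|b|}\ \ge\ \frac{4\,\mathrm{cap}(\gamma)}{|b|}\cdot\frac{(d-\frac12)^{1/d}}{d^2},
\qquad\text{i.e.}\qquad
\mathrm{cap}(\gamma)\ \le\ \frac{d^2}{4\,(d-\frac12)^{1/d}}=\Bigl(\frac{2}{2d-1}\Bigr)^{\frac1d}\frac{d^2}{2}\cdot\frac12 .
\end{equation*}
So the claim is a Markov-type bound for the arc $\gamma$, sharper by a factor $1/2$ than Theorem \ref{thm:3} applied to $\overline\Omega$.

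The natural step is to apply Theorem \ref{thm:3} not to $\Omega$ but to the "doubled" picture of $\gamma$. Consider the polynomial $Q(z)=P(z)^2$, or better $Q=P\cdot \overline{P(b)}$ composed with the map $w\mapsto$ something that folds the segment $[0,P(b)]$. A cleaner route is to run the \emph{proof} of Theorem \ref{thm:3} directly. Its proof, per the abstract, solves an extremal problem for the capacity of lemniscates and then uses Eremenko--Hayman quasiconformal deformation. Here the set of interest is the continuum $E=P^{-1}([0,P(b)])\cap\overline\Omega=\gamma$, on which $\sup|P|=1$ and $\sup|P'|\ge|P'(0)|=1$. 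Eremenko's Markov inequality \eqref{Markov} in its proof compares $E$ with $P^{-1}(P(E))$. For a segment $P(E)$ of capacity $1/4$ (versus $1$ for the disk), the Green-function comparison gives $\mathrm{cap}(P^{-1}(I))=(\mathrm{cap}\, I/|a_d|)^{1/d}$.

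The key steps are as follows.
\begin{enumerate}
\item Reduce to the extremal configuration of Theorem \ref{thm:3}. Among $P\in\mathcal P_d$, consider the extremal problem of maximizing $\mathrm{cap}(\gamma)\,|P'(0)|/t(P)$. Show by the same compactness and quasiconformal deformation argument that an extremal $P_0$ exists with all nonzero critical values of equal modulus.
\item Bound $\mathrm{cap}(\gamma)$ by monotonicity. Since $\gamma\subset P^{-1}([0,P(b)])$, the capacity of $\gamma$ is controlled by the quantity appearing in the proof of Theorem \ref{thm:3}, with the radius-$1$ disk replaced by a segment of capacity $1/4$. This yields the factor $\frac12$ after the $d$-th root bookkeeping. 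The bookkeeping uses the identity \eqref{leadingCoefficientIdentity} and the product of the critical-point moduli.
\item Combine with $S(P,b)=1/|b|$, which gives the first inequality. The second inequality follows from $\mathrm{cap}(\gamma)\ge\mathrm{cap}[0,b]$: the circular projection $z\mapsto|z|$ is a contraction mapping $\gamma$ onto $[0,|b|]$, because $\gamma$ connects $0$ to $b$.
\end{enumerate}

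I expect step 2 to be the main obstacle: obtaining exactly the constant $\frac12\bigl(\frac{2}{2d-1}\bigr)^{1/d}\frac{d^2}{2}$, rather than losing a factor. I would first try to verify it on the extremal polynomial $P_0$ from Corollary \ref{cor:2}. There, all critical values lie on $|w|=t$, and the preimage of the radial segment should be explicitly comparable to the lemniscate through the equal-modulus structure. I would then transfer the bound to general $P$ through the extremality.
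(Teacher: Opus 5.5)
\textbf{There is a genuine gap in your Step 2.} Your reduction is correct: after Schlicht normalization the claim is exactly $\mathrm{cap}(\gamma)\le\frac12\bigl(\frac{2}{2d-1}\bigr)^{1/d}\frac{d^2}{2}$. Your radial-projection argument for $\mathrm{cap}(\gamma)\ge\mathrm{cap}[0,b]$ is also fine. The problem is how you propose to gain the factor $\frac12$, and Step~2 does not deliver it.
\begin{itemize}
\item Monotonicity against the full preimage gives $\mathrm{cap}(\gamma)\le\mathrm{cap}(P^{-1}([0,P(b)]))=(4|a_d|)^{-1/d}$. So passing from the disk to a segment of capacity $\frac14$ gains only $4^{-1/d}$, not $\frac12$.
\item $|a_d|^{-1/d}$ is not bounded in terms of $|P'(0)|=1$ over Schlicht normalized polynomials of fixed degree, since the leading coefficient may tend to $0$ (cf.\ Theorem~\ref{thm:1}(iii)).
\item $P^{-1}([0,P(b)])$ is in general disconnected, so neither Eremenko's inequality nor Theorem~\ref{thm:5} applies to it.
\item Take the most favourable case, where $P^{-1}(\overline{\mathbb D})$ is connected and Eremenko's inequality bounds its capacity by $2^{1/d-1}d^2$. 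Even then your chain only yields $\mathrm{cap}(\gamma)\le 2^{-1-1/d}d^2$. This exceeds the target by the factor $2\bigl(\frac{2d-1}{4}\bigr)^{1/d}>1$.
\end{itemize}
Step~1 sets up a new extremal problem for $\mathrm{cap}(\gamma)|P'(0)|$, with $b$ and $\gamma$ varying. You have not established its compactness or subharmonicity properties, and it would not repair Step~2 anyway.

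\textbf{The missing idea is a re-centering.} It turns the disk $\Delta(\frac12P(b);\frac12)$, which is internally tangent to $\partial\mathbb D$ at $P(b)$ and contains the open segment $(0,P(b))$, into the unit disk. Let $w_0=f(\frac12P(b))\in\gamma$ and $Q(z)=\frac{2}{P(b)}P(z+w_0)-1$.
\begin{itemize}
\item $P$ maps $f(\Delta(\frac12P(b);\frac12))$ univalently onto $\Delta(\frac12P(b);\frac12)$. Hence $Q$ maps $f(\Delta(\frac12P(b);\frac12))-w_0$ univalently onto $\mathbb D$, so this set is $D(Q,1)$.
\item Its closure contains $\gamma-w_0$.
\item The critical point $b-w_0$ lies on its boundary, so $t(Q)=1$ and $Q\in\mathcal{P}_{d}$.
\end{itemize}
Now apply Theorem~\ref{thm:3} to $Q$. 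Evaluate $|Q'|$ at $-w_0\in\gamma-w_0\subset\overline{D(Q,1)}$, where $|Q'(-w_0)|=2|P'(0)|=2$:
\[
2\,\mathrm{cap}(\gamma)\le \mathrm{cap}\bigl(\overline{D(Q,1)}\bigr)\,|Q'(-w_0)|\le\Bigl(\frac{2}{2d-1}\Bigr)^{1/d}\frac{d^2}{2},
\]
which is precisely your target. This is the paper's proof, written there with the normalization $b=1$, $P(1)=1$.

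The factor $\frac12$ comes from the affine map $w\mapsto 2w-1$. It doubles the derivative while keeping $\sup|Q|=1$ on a maximal univalent lemniscate, so no capacity bookkeeping is involved. Your first instinct, post-composing $P$ with a map on the image side, was the right direction. The map needed is this affine one, though, not $P^2$ or a folding map.
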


Theorems \ref{thm:1},\ref{thm:2} and Corollary \ref{cor:1} (our main results for the mean value conjecture) are established by means of a strengthened version of the Koebe $\frac{1}{4}$-Theorem for Schlicht functions mapping the unit disk onto sets having bounded logarithmic capacity, due to Cunningham \cite{Cunningham1993}, whereas Corollaries \ref{cor:2} and \ref{SharperDualIneq} (our results for the dual mean value conjecture) are derived from Theorem \ref{thm:3}. 

This paper uses the notion of logarithmic capacity. See e.g. \cite{ransford1995potential},\cite{saff-totik2024} for the definition and basic properties of logarithmic capacity.\\

\subsection{Notation and Background} \label{Notation}
We record here basic notation to be used throughout this paper.
Whenever $Q$ is a polynomial, we will let $$Z(Q)$$ refer to its multi-set of zeros, that is, $Z(Q)$ contains the zeros of $Q$ recorded with multiplicity. Moreover, $$\mathbb{D}_{r}$$ will refer to the $\textbf{open}$ disk with center zero and radius $r > 0$, while $$\overline{\mathbb{D}_{r}}$$ will refer to its closure. More generally we will denote the open disk of radius $r > 0$ and center $z_0 \in \mathbb{C}$ by $$\Delta(z_0;r) \coloneq \{z \in \mathbb{C} : |z - z_0| < r \}$$ and set $$\Delta(R) \coloneq \{z \in \overline{\mathbb{C}} : |z| > R \}$$  $$\Delta \coloneq \Delta(1)$$

We recall here the definition of $t(P)$ (see also the discussion at the beginning of Section \ref{preliminaryConstructionsandStatementOfResults}). We define, for $P$ a non-linear polynomial fixing the origin with non-zero derivative at the origin $t(P)$ to be the maximum of the quantities $|P(b)|$, where the maximum is taken over the set of critical points $b$ of $P$ for which $P$ is a univalent conformal mapping of the component $D(P,|P(b)|)$ of $P^{-1}(\mathbb{D}_{|P(b)|})$ containing $0$ onto $\mathbb{D}_{|P(b)|}$. Then $t(P) = |P(b_0)|$ for some critical point $b_0$ of $P$ lying on $\partial D(P,t(P))$, and $D(P,t(P))$ is a Jordan domain bounded by a piecewise-analytic Jordan curve (see the discussion at the start of Section \ref{preliminaryConstructionsandStatementOfResults}), for which $P$ effects a univalent mapping of $D(P,t(P))$ onto $\mathbb{D}_{t(P)}$. Indeed, $D(P,t(P))$ is the largest such domain (containing the origin), which we refer to as the \textbf{maximal univalent lemniscate} of $P$ (centered at the origin). We let $$E(H,r)$$ and $$D(H,r)$$ (respectively) denote the components of $$H^{-1}(\overline{\mathbb{D}_{r}})$$ and $$H^{-1}(\mathbb{D}_{r})$$ containing the origin, whenever $H$ is a polynomial fixing the origin.

Define  $$U(P) := \max_{b \in Z(P') \cap \partial D(P,t(P)) } |\frac{P(b)}{b P'(0)}|$$ $$L(P) \coloneq \min_{b \in Z(P') \cap \partial D(P,t(P))} |\frac{P(b)}{b P'(0)}|$$ and $$M(P,b_0) := (\prod_{b \in Z(P')} \frac{|b|}{|b_0|})^{\frac{1}{d-1}}$$ for a distinguished critical point $b_0 
\in Z(P')$. Finally we let $$M(P) := \max_{b \in Z(P')} M(P,b)$$ $$S(P,b) := |\frac{P(b)}{b P'(0)}|$$ for a critical point $b \in Z(P')$ and $$S(P) \coloneq \min_{b \in Z(P')} S(P,b)$$ $$D(P) \coloneq \max_{b \in Z(P')} S(P,b)$$

Note that  $U(P)$,$L(P)$, $S(P)$,$D(P)$ and $S(P,b)$, $M(P,b)$ are invariant under rescalings $P \rightarrow \alpha P (\beta z)$ of the polynomial $P$ and that we have the following two inequalities:

$$S(P) \leq U(P)$$
$$L(P) \leq D(P)$$

It follows that obtaining upper and lower bounds for $U(P)$ and $L(P)$ will in turn establish upper and lower bounds for $S(P)$ and $D(P)$ respectively. 

The choice of restriction to the set $Z(P') \cap \partial D(P,t(P))$ for $U(P)$ was first considered by Dubinin in the paper \cite{dubinin2012some}, where the below result was proved.

\begin{result}[Dubinin; \cite{dubinin2012some} Corollary 3]
     Let $P(z) = a_dz^d + ... + a_1z$ , $a_d, a_1 \neq 0$, $d \geq 2$, then $$S(P,b_0) +  \frac{1}{M(P,b_0) (d-1)^{\frac{1}{d}}} \leq 4 $$

    holds for any critical point $b_0 \in \partial D(P,t(P))$.
\end{result} 

  For the quantity $L(P)$, Dubinin has mentioned in \cite{dubinin2012some} that it would be interesting to consider its upper bound in connection to his Problem 3.  On the other hand, Dubinin's finite increment theorem \cite{Dubinin2010} actually implies the following lower bound for $L(P)$:

\begin{result}[Dubinin; \cite{Dubinin2010} Theorem 2]
    Let $P(z) = a_dz^d + ... + a_1z$, $a_d,a_1 \neq 0$, $d \geq 2$, then $$L(P) \geq \frac{1}{d^2}$$ 
\end{result}
The main results of this paper provide, as corollaries, refinements of the above results of Dubinin. In particular Corollary \ref{cor:1} refines Result 1, and Corollaries \ref{cor:2},\ref{SharperDualIneq} refine Result 2.

\medskip
In Section \ref{sec:2} we develop the tools needed for the two mean value conjectures.  We first study an extremal problem for the logarithmic capacity of polynomial lemniscates and establish a sharp comparison principle (Theorem \ref{thm:5}), with the polynomial $P^\ast(z)=z^d-z$ playing an extremal role.  We then deduce Corollary \ref{cor:1} from Theorem \ref{thm:2} (Section \ref{sec2.3}), and give the proofs of the quantitative bounds for the mean value conjecture (Theorems \ref{thm:1} and \ref{thm:2}) in Section \ref{sec2.4}, combining capacity estimates with Cunningham’s refined Koebe $\frac14$--theorem. Section \ref{sec3} concerns the extremal problem for the boundary quantity $U(P)$ and proves the existence and structure of extremal polynomials for $\sup_{P \in \mathcal{P}_{d}} U(P)$ (Theorem \ref{thm:9}), using a quasiconformal deformation argument in the spirit of Eremenko--Hayman. In Section \ref{sec:4} we prove the improved Markov-type inequality for the maximal univalent lemniscates $\overline{D(P,t(P))}$ (Theorem \ref{thm:3}), refining Eremenko’s inequality by incorporating the capacity comparison principle from Section \ref{sec:2} (Theorem \ref{thm:5}).  Corollary \ref{cor:2} (the improved unconditional lower bound for the dual mean value conjecture) is then proved at the end of Section \ref{sec4.1} and Corollary \ref{SharperDualIneq} is obtained in the final Section \ref{sec4.2}.

\medskip  

\section{Proofs for Mean Value Conjecture results} \label{sec:2}
In this section, we will prove our results for the mean value conjecture (Theorems \ref{thm:1},\ref{thm:2} and Corollary \ref{cor:1}). In Section \ref{sec2.3} we prove Corollary \ref{cor:1} from Theorem \ref{thm:2} while Theorems \ref{thm:1} and \ref{thm:2} are established in Section \ref{sec2.4}. Section \ref{sec2.1} establishes an extremal property of the polynomial $P^{\ast}(z) = z^d - z$ which will turn out to be key to the proof of Theorem \ref{thm:3}, and is (less essentially) also used in the proof of Corollary \ref{cor:1}.

We state here for future reference a result (Theorem 5.2.5 of \cite{ransford1995potential}) which will be frequently used in our paper.\\

\begin{proposition}\label{prop4}
Let $E \subset \mathbb{C}$ be a compact set, and $P \in \mathbb{C}[z]$ a degree $d$ polynomial. Then

$$\mathrm{cap}(P^{-1}(E)) = (\frac{\mathrm{cap}(E)}{|a_d|})^{\frac{1}{d}}$$
Here $\mathrm{cap}$ denotes the logarithmic capacity.
\end{proposition}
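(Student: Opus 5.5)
The plan is to use the equilibrium measure and Frostman's theorem, and to pull back potentials through $P$ using the factorization
\[
P(z)-\zeta=a_d\prod_{j=1}^{d}(z-z_j(\zeta)),
\]
where the $z_j(\zeta)$ are the $d$ roots of $P(z)=\zeta$, listed with multiplicity. Write $K:=P^{-1}(E)$, which is compact. Here the potential of a probability measure $\mu$ is $p_\mu(w)=\int\log|w-\zeta|\,d\mu(\zeta)$, and I use the sign convention in which $\log\mathrm{cap}(K)=\sup_\nu I(\nu)$, with $I(\nu)=\int p_\nu\,d\nu$ and the supremum taken over probability measures $\nu$ on $K$.

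\textbf{Polar case.} First dispose of $\mathrm{cap}(E)=0$. Then $E$ is polar and one needs $K$ polar. If $\nu$ is a probability measure on $K$ of finite energy, then $P_*\nu$ is a probability measure on $E$. The identity $\log|P(z)-P(w)|=\log|a_d|+\sum_j\log|z-z_j(P(w))|$, together with local boundedness of $P$, shows that $P_*\nu$ also has energy $>-\infty$. This contradicts the polarity of $E$, so $\mathrm{cap}(K)=0$, as claimed.

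\textbf{Candidate measure on $K$.} Now let $\mathrm{cap}(E)>0$ and let $\mu$ be the equilibrium measure of $E$. Define the pulled-back probability measure on $K$ by
\[
\nu:=\frac1d\int\sum_{j=1}^d\delta_{z_j(\zeta)}\,d\mu(\zeta).
\]
Taking logarithms in the factorization and integrating against $\mu$ gives the key identity
\[
p_\nu(z)=\frac1d\bigl(p_\mu(P(z))-\log|a_d|\bigr)\qquad (z\in\mathbb{C}).
\]
By Frostman's theorem, $p_\mu\ge\log\mathrm{cap}(E)$ on $\mathbb{C}$, with equality on $E$ outside a polar set $N$. Since preimages of polar sets under polynomials are polar (the argument of the polar case, applied to $N$), we obtain, with $c:=\frac1d(\log\mathrm{cap}(E)-\log|a_d|)$,
\[
p_\nu\ge c\ \text{on}\ \mathbb{C},\qquad p_\nu=c\ \text{q.e. on}\ K.
\]

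\textbf{Identifying $c$ with $\log\mathrm{cap}(K)$.} This is the main step, a standard symmetry argument. First, $\nu$ has finite energy: $p_\nu$ is bounded below by $c$, and bounded above on the compact set $K$. A measure of finite energy does not charge polar sets, so
\[
I(\nu)=\int p_\nu\,d\nu=c,\quad\text{hence}\quad \log\mathrm{cap}(K)\ge c.
\]
Conversely, let $\nu_K$ be the equilibrium measure of $K$, which also charges no polar set. By Fubini (symmetry of the kernel) and Frostman applied to $K$,
\[
c=\int p_\nu\,d\nu_K=\int p_{\nu_K}\,d\nu\ge\log\mathrm{cap}(K).
\]
Hence $\log\mathrm{cap}(K)=c$, which is exactly the claimed formula $\mathrm{cap}(P^{-1}(E))=(\mathrm{cap}(E)/|a_d|)^{1/d}$.

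\textbf{Expected obstacle.} The only delicate points are measure-theoretic. One is the measurability and well-definedness of $\nu$; the roots vary continuously as an unordered tuple, so $\zeta\mapsto\sum_j\phi(z_j(\zeta))$ is continuous for continuous $\phi$. The other is the repeated use that finite-energy measures do not charge polar sets. Alternatively, one could argue via Green's functions: $g_K(z)=\frac1d g_E(P(z))$ on the unbounded component, and compare constants at $\infty$. That route is equally short, but it requires treating regularity and the components of the complement separately, so I would use the potential-theoretic route above.
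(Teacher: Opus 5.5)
The paper gives no proof of its own here; it only cites Theorem 5.2.5 of Ransford, so your argument has to stand on its own. Its overall structure is sound, but your polar-set lemma has a genuine gap. Both the case $\mathrm{cap}(E)=0$ and your inequality $\log\mathrm{cap}(K)\le c$ depend on it; the latter does so through ``$P^{-1}(N)$ is polar, so $p_\nu=c$ $\nu_K$-a.e.'' You assert that for an \emph{arbitrary} probability measure $\nu$ on $K$ with $I(\nu)>-\infty$, the factorization identity together with local boundedness of $P$ gives $I(P_*\nu)>-\infty$. Integrating the identity actually gives
\[
I(P_*\nu)=\log|a_d|+\int\sum_{j=1}^{d}p_\nu\bigl(z_j(P(w))\bigr)\,d\nu(w).
\]
Only the branch with $z_j(P(w))=w$ is controlled by $I(\nu)$. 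The other terms evaluate $p_\nu$ at the remaining preimages $z_j(P(w))\neq w$; they are mutual energies of $\nu$ with its images under the other local inverse branches of $P$, and nothing you invoke bounds them from below. Boundedness (or the Lipschitz property) of $P$ only yields $\log|P(z)-P(w)|\le\log L+\log|z-w|$, which is an upper bound and points the wrong way. The claim is in fact true, but proving it for general $\nu$ needs extra machinery: positive definiteness of the logarithmic energy on measures of total mass zero, plus a local analysis at the critical points of $P$.

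The repair is simple and actually shortens your proof: apply the identity to the equilibrium measure of $K$ rather than to an arbitrary finite-energy measure. For any probability measure $\sigma$ on $K$, the same factorization gives $p_{P_*\sigma}(\zeta)=\log|a_d|+\sum_j p_\sigma(z_j(\zeta))$. If $\mathrm{cap}(K)>0$, Frostman gives $p_{\nu_K}\ge\log\mathrm{cap}(K)$ on all of $\mathbb{C}$. Since $P_*\nu_K$ is a probability measure on $E$,
\[
\log\mathrm{cap}(E)\ \ge\ I(P_*\nu_K)=\int p_{P_*\nu_K}\,d(P_*\nu_K)\ \ge\ \log|a_d|+d\log\mathrm{cap}(K).
\]
This is exactly $\log\mathrm{cap}(K)\le c$, and it also settles the polar case: if $\mathrm{cap}(E)=0$ it forces $\mathrm{cap}(K)=0$. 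Your other direction only needs $p_\nu\ge c$ everywhere, which gives $\log\mathrm{cap}(K)\ge I(\nu)\ge c$ directly. You can therefore drop the equality $I(\nu)=c$, the exceptional set $N$, the polar-preimage lemma and the Fubini step. The argument then uses only the inequality part of Frostman's theorem, applied once to $E$ and once to $K$.
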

\subsection{An extremal problem for logarithmic capacity} \label{sec2.1}
In this section, we will establish the following result.

\begin{theorem}\label{thm:5}
Let $P$ be a degree $d \geq 2$ polynomial, satisfying $P'(0) \neq 0$ and $P(0) = 0$. Write $D(P,t(P))$ for the component of $P^{-1}(\mathbb{D}_{t(P)})$ containing the origin. Then the following inequality holds:

$$\frac{\mathrm{cap}(\overline{D(P,t(P))})}{\mathrm{cap}(P^{-1}(\overline{\mathbb{D}_{t(P)}}))} \leq  \frac{\mathrm{cap}(\overline{D(P^{\ast},t(P^{\ast}))})}{\mathrm{cap}({(P^{\ast})^{-1}(\overline{\mathbb{D}_{t(P^{\ast})}}))}}  \eqqcolon C(d)^{-1} = (2d-1)^{-\frac{1}{d}}$$

Equality holds in the first inequality for linear rescalings of $P^{\ast}(z) = z^d - z$
\end{theorem}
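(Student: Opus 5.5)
We would first reduce to a normalization and then translate everything to the unit disk. By the invariance of both capacities under $P\mapsto \alpha P(\beta z)$ (Proposition \ref{prop4} gives $\mathrm{cap}(P^{-1}(\overline{\mathbb{D}_{t}}))=(t/|a_d|)^{1/d}$, and $\mathrm{cap}(\overline{D(P,t(P))})$ scales linearly in $\beta^{-1}$), we assume $P$ is Schlicht normalized, so $t(P)=1$ and $\Omega=D(P,1)$. Write $R=\mathrm{cap}(\overline\Omega)$. The claim is then equivalent to
\[
|a_d|\,R^d\le \frac{1}{2d-1}.
\]
Let $\varphi(\zeta)=R\zeta+\cdots$ be the conformal map of $\Delta$ onto $\overline{\mathbb{C}}\setminus\overline\Omega$, and set $F(z)=1/P(\varphi(1/z))$ on $\mathbb{D}$. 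Since $\partial\Omega$ is a Jordan curve on which $|P|=1$, $F$ extends continuously with $|F|=1$ on $\partial\mathbb{D}$. Near $0$, $F$ has a zero of order $d$ with $F(z)\sim z^d/(a_dR^d)$. Because $P$ is univalent on $\Omega$ with its only zero there at $0$, the remaining $d-1$ zeros of $P$ lie outside $\overline\Omega$ (none on $\partial\Omega$, where $|P|=1$). They give poles $p_1,\dots,p_{d-1}\in\mathbb{D}$ of $F$. By the reflection principle,
\[
F(z)=e^{i\theta_0}z^d\prod_{j=1}^{d-1}\frac{1-\overline{p_j}z}{z-p_j},
\]
and comparing leading terms at $0$ yields the key identity $|a_d|R^d=\prod_j|p_j|$. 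So we must show $\prod_j r_j\le 1/(2d-1)$, where $r_j=|p_j|$.

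Next we encode the maximal univalence. $P$ maps $\partial\Omega$ homeomorphically onto the unit circle, so $\arg F(e^{i\theta})$ is monotone. Its derivative is
\[
\frac{d}{d\theta}\arg F(e^{i\theta})=d-\sum_j \mathcal{P}_j(\theta),\qquad \mathcal{P}_j(\theta)=\frac{1-r_j^2}{|e^{i\theta}-p_j|^2}.
\]
The winding number is $d-(d-1)=1>0$, so this derivative is $\ge 0$ everywhere. The existence of a critical point of $P$ on $\partial\Omega$ forces equality at some $\theta$. For $d=2$ this already finishes the proof: $(1+r)/(1-r)\le 2$ gives $r\le 1/3$.

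In general, consider
\[
H(w)=d-\sum_j\frac{w+p_j}{w-p_j}\quad (|w|>1).
\]
It is analytic with $\mathrm{Re}\,H\ge 0$ and $H(\infty)=1$, and its expansion is
\[
H(w)=1-2\sum_{k\ge1}s_k w^{-k},\qquad s_k=\sum_j p_j^k.
\]
Moreover $\mathrm{Re}\,H$ on the circle equals $N(\theta)/\prod_j|e^{i\theta}-p_j|^2$, with $N$ a nonnegative trigonometric polynomial of degree $d-1$ that vanishes somewhere. Carathéodory--Toeplitz/Fejér--Riesz then describe the admissible $(p_j)$: $N=|q|^2$ for a polynomial $q$ of degree $d-1$ with a zero on the circle. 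We would then maximize $\prod r_j$ over this semialgebraic set. The plan is to show that, at a maximizer, the constraint $\sum_j\mathcal{P}_j\le d$ must be tight at $d-1$ points. The argument is a variational one: if some $p_j$ can be pushed radially outward without violating the constraint, the product increases. A rotation/symmetrization argument should then force the $p_j$ to be equally spaced on a circle $|p|=r$. For that configuration,
\[
\sum_j\mathcal{P}_j(\theta)=\mathrm{Re}\,\frac{(d-1)(1+r^{d-1}e^{-i(d-1)\theta})}{1-r^{d-1}e^{-i(d-1)\theta}}\quad\text{(up to rotation)},
\]
with maximum $(d-1)\frac{1+r^{d-1}}{1-r^{d-1}}$. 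Setting this equal to $d$ gives $r^{d-1}=1/(2d-1)$, matching $P^\ast(z)=z^d-z$ (whose $F$ has exactly this symmetric pole configuration). This yields both the inequality and the equality case.

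The main obstacle is this last extremal step: proving that $\prod_j r_j$ is maximized, under the constraint $\sum_j\mathcal{P}_j\le d$ on $\partial\mathbb{D}$, by the equally spaced configuration. If the variational argument proves awkward, we would try a direct route. The idea is to apply the Carathéodory--Toeplitz coefficient bounds for $H$ to the rational function $\prod_j(1-\overline{p_j}w^{-1})/(1-p_jw^{-1})$, evaluated via a subordination of $H$ to $(1+ w^{-(d-1)}\,\cdot)/(1-w^{-(d-1)}\,\cdot)$. This would bound the geometric mean of the $r_j$ by the value attained by $P^\ast$.
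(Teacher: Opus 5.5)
Your reduction is sound and matches the paper's. Write $B(z)=\prod_j (z-p_j)/(1-\overline{p_j}z)$. Then your $F$ is $e^{i\theta_0}z^d/B(z)$, and your identity reads $|a_d|R^d=|B(0)|$. Since $|B'(e^{i\theta})|=\sum_j\mathcal{P}_j(\theta)$ on the circle, your monotonicity condition is exactly $\|B'\|_{\partial\mathbb{D}}\le d$. The $d=2$ case and the computation for the equally spaced configuration (hence the equality case for $z^d-z$) are also fine.

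But the whole theorem rests on the implication
\[
\deg B=d-1,\quad \|B'\|_{\partial\mathbb{D}}\le d\ \Longrightarrow\ |B(0)|\le \frac{1}{2d-1},
\]
and this is what you leave open.

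\begin{itemize}
\item The variational plan only shows that at a maximizer no single $p_j$ can be moved radially outward. It does not show that the constraint is active at $d-1$ points.
\item The ``rotation/symmetrization'' step names no operation that preserves $\sup_\theta\sum_j\mathcal{P}_j(\theta)\le d$ without decreasing $\prod_j r_j$. So equal spacing is asserted, not derived.
\item The fallback is not made precise, and coefficient bounds for $H$ alone cannot suffice. They give $|s_k|\le 1$, yet for $d\ge 3$ the equally spaced configuration with $r^{d-1}=1/(d-1)>1/(2d-1)$ satisfies all of them.
\end{itemize}

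The missing idea is a Herglotz representation rather than an extremal configuration. This is the Cowen--Pommerenke identity (Theorem 8.1 of \cite{cowen1982inequalities}) that the paper uses.

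Fix $\lambda\in\partial\mathbb{D}$, and let $\zeta_1,\dots,\zeta_{d-1}$ be the solutions of $B(\zeta)=\lambda$ on the circle. The rational function $h=(\lambda+B)/(\lambda-B)$ has $\mathrm{Re}\,h=0$ on $\partial\mathbb{D}$ except at simple poles at the $\zeta_k$. On the circle $\zeta B'(\zeta)/B(\zeta)=|B'(\zeta)|$, and matching the poles gives
\[
h(z)=\sum_{k=1}^{d-1}\frac{1}{|B'(\zeta_k)|}\,\frac{\zeta_k+z}{\zeta_k-z}+ic,\qquad c\in\mathbb{R}.
\]
This holds because the difference is analytic near $\overline{\mathbb{D}}$ with zero real part on the circle. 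Hence
\[
\sum_k \frac{1}{|B'(\zeta_k)|}=\mathrm{Re}\,h(0)=\frac{1-|B(0)|^2}{|\lambda-B(0)|^2}.
\]
Here $B(0)\neq 0$ since all $p_j\neq 0$. Choosing $\lambda=-B(0)/|B(0)|$ makes the right side equal to $(1-|B(0)|)/(1+|B(0)|)$. Each term on the left is at least $1/d$, so $(d-1)/d\le (1-|B(0)|)/(1+|B(0)|)$, i.e.\ $|B(0)|\le 1/(2d-1)$.

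This needs no maximizer and no symmetrization. It does not even use the critical point on $\partial\Omega$, which only matters for the equality case.
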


The below figures show how the \textbf{central components} $\overline{D(P^{\ast},t(P^{\ast}))}$ i.e. the closures of the regions bounded by the single Jordan curve encircling the origin, compare in size to the entire lemniscate $(P^{\ast})^{-1}(\overline{\mathbb{D}_{t(P^{\ast})}})$ for the extremal polynomial $P^{\ast}$ of Theorem \ref{thm:5}. As $d \rightarrow \infty$, one sees that the central components become asymptotically circular.

\begin{figure}[ht]
   \begin{subfigure}{0.2\textwidth}
       \includegraphics[width = \linewidth]{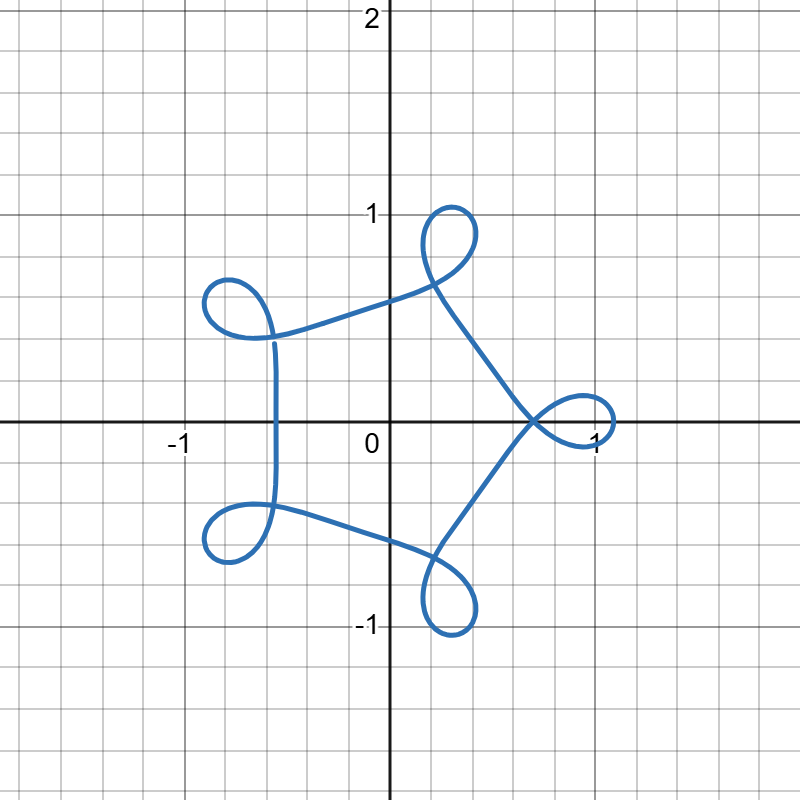} 
       \caption{Critical lemniscate of $z^6 - z$}
       \label{fig:subim1}
   \end{subfigure}
\hfill 
     \begin{subfigure}{0.2\textwidth}
       \includegraphics[width = \linewidth]{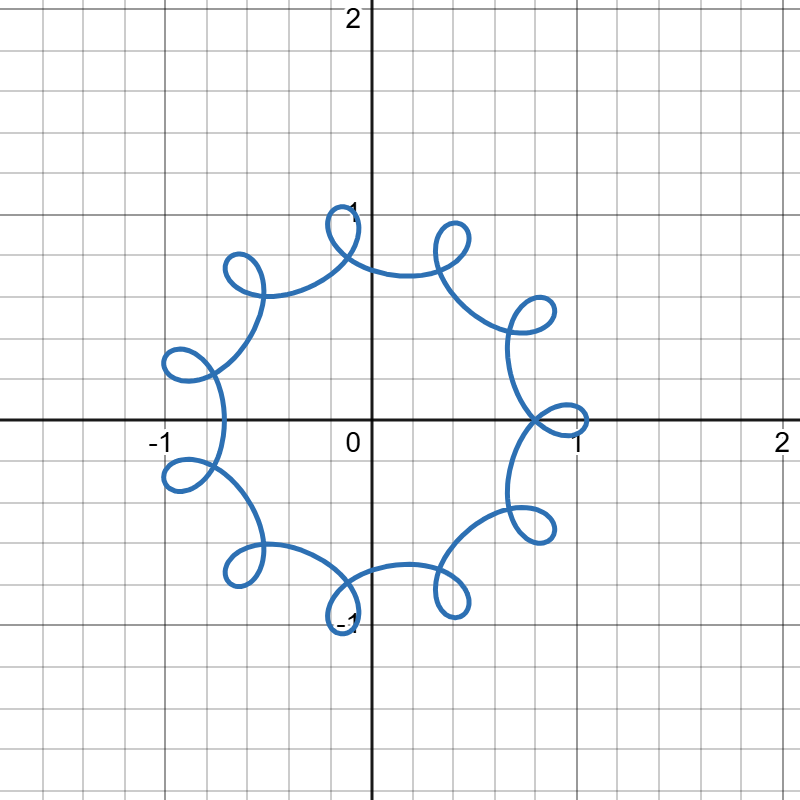} 
       \caption{Critical lemniscate of $z^{12} - z$}
       \label{fig:subim2}
   \end{subfigure}
\hfill 
    \begin{subfigure}{0.2\textwidth}
       \includegraphics[width = \linewidth]{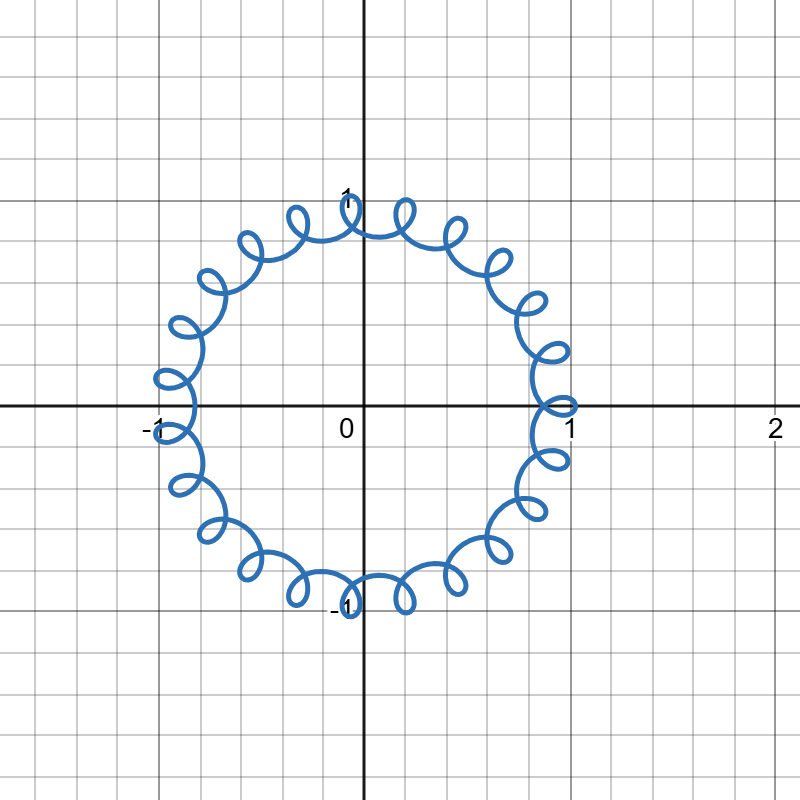} 
       \caption{Critical lemniscate of $z^{24} - z$}
       \label{fig:subim3}
   \end{subfigure}
\hfill 
    \begin{subfigure}{0.2\textwidth}
       \includegraphics[width = \linewidth]{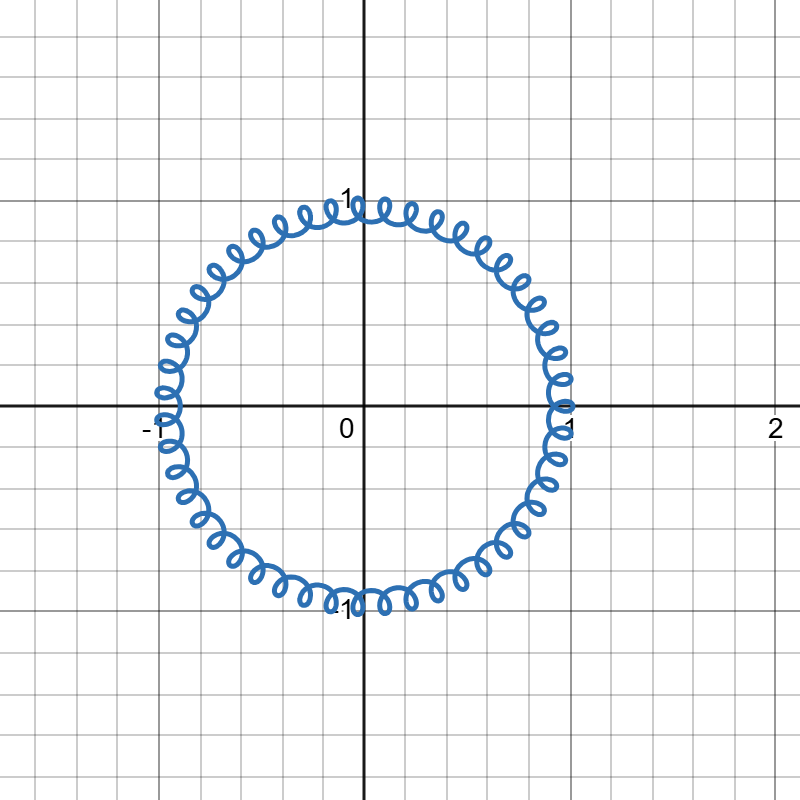} 
       \caption{Critical lemniscate of $z^{48} - z$}
       \label{fig:subim4}
   \end{subfigure}

\caption{Critical lemniscates of $z^d-z$ for $d=6,12,24,48$}
\label{fig:image2}
   
\end{figure}

\begin{remark}
Note that the ratio $$\frac{\mathrm{cap}(\overline{D(P,t(P))})}{\mathrm{cap}(P^{-1}(\overline{\mathbb{D}_{t(P)}}))}$$ is invariant under rescalings of $P$, i.e. if $Q(z) = \alpha P(\beta z)$ for some non-zero complex numbers $\alpha, \beta \in \mathbb{C}$, then $$\frac{\mathrm{cap}(\overline{D(P,t(P))})}{\mathrm{cap}(P^{-1}(\overline{\mathbb{D}_{t(P)}}))} = \frac{\mathrm{cap}(\overline{D(Q,t(Q))})}{\mathrm{cap}(Q^{-1}(\overline{\mathbb{D}_{t(Q)}}))}$$ Thus in light of the fact that the Schlicht normalization of $$P^{\ast}(z) = z^d - z$$ has the form $$Q^{\ast} (z) \coloneq \frac{1}{t(P^{\ast})} P^{\ast}( -t(P^{\ast}) z)$$ and $$t(P^{\ast}) = \frac{d-1}{d^{\frac{d}{d-1}}}$$ we observe that the central components $$D(P^{\ast},t(P^{\ast})) = t(P^{\ast}) D(Q^{\ast},1) = \frac{d-1}{d^{\frac{d}{d-1}}} D(Q^{\ast},1)$$ are asymptotically given by the unit disk $\mathbb{D}$, by (the equality case of) P\'olya's Theorem (\ref{Polya}) and the fact that $$1 \leq \mathrm{cap}(\overline{D(Q^{\ast},1)}) = (2d-1)^{-\frac{1}{d}} t(P^{\ast})^{-\frac{d-1}{d}} \rightarrow 1$$ In fact the same argument shows that for any sequence of Schlicht normalized polynomials $P_{d_k} ; k=1,2,...$ of increasing degrees $\deg(P_{d_k}) = d_k \uparrow \infty$, for which $(d,c_{d_k}) ; k=1,2, \ldots$ lies in the blue region of Figure 1 (or more generally such that $\lim_{k \rightarrow \infty} c_{d_k}^{-\frac{1}{d_k}} = 1$), one has that $D(P_{d_k},1)$ is asymptotically given by $\mathbb{D}$ as $k \rightarrow \infty$. 
\end{remark}
\subsection{Proof of Theorem \ref{thm:5}} \label{sec2.2}
\begin{proof}
Let $P$ be a degree $d $ polynomial, satisfying $P'(0) \neq 0$ and $P(0) = 0$ (as in the statement of Theorem \ref{thm:5}). Since the left-hand side of the inequality of Theorem \ref{thm:5} is invariant under rescalings of the form $P \rightarrow A P(B z)$, we may assume that $t(P) = 1$. Let $$f : \Delta \rightarrow \overline{\mathbb{C}} \setminus \overline{D(P,1)}$$ be the exterior conformal map fixing $\infty$ and satisfying $$f(z) \sim \mathrm{cap}(\overline{D(P,1)})z$$ near $\infty$. Let $$\alpha \coloneq \mathrm{cap}(\overline{D(P,1)}) $$

Define $$g(w) \coloneq P(f(w))$$ Since $\partial D(P,1)$ is locally-connected (in fact, it is a piecewise analytic Jordan curve by the argument given at the start of Section \ref{preliminaryConstructionsandStatementOfResults}), by Carath\'eodory's theorem (see Theorem 2.6 of page 24 of \cite{pommerenke1992boundary}) we conclude that $f$ extends continuously to $\overline{\Delta}$, i.e. to the boundary $\partial \Delta = \partial \mathbb{D}$, and $f(w) \in \partial D(P,1)$ for $w \in \partial \mathbb{D}$, so that $|g(w)| = 1$ for $w \in \partial \mathbb{D}$. By the Schwarz reflection principle, $g$ is a rational function. Since there are exactly $d-1$ zeros of $P$ (counting multiplicity) lying exterior to $D(P,1)$, $g$ has exactly $d-1$ zeros (counting multiplicity) lying in $\Delta$ (which are all finite), and since $g$ is defined in $\mathbb{D}$ by reflection in the unit circle, $g$ has exactly $d-1$ poles (counting multiplicity) lying in $\mathbb{D}$, which are all contained in $\mathbb{D} \setminus \{0 \}$. Since $f(z) \sim \alpha z$ near $\infty$, $g$ has a pole of order $d$ at $\infty$, and no other poles in $\Delta$. By reflection, $g$ has a zero of order $d$ at the origin, and has no other zeros in $\mathbb{D}$. Collectively, these points comprise all of the zeros and poles of $g$. It follows that $$g(w) = \lambda \frac{w^d}{B(w)}$$ where $|\lambda| = 1$ is a unimodular constant and $B$ is a finite Blaschke product of degree $d-1$ of the form $$B(w) = \prod_{j=1}^{d-1} \frac{w - A_{j}}{\overline{A_{j}}w - 1}$$ where $$A_{j} \in \mathbb{D} \setminus \{0 \} ; j=1,...,d-1$$  we find that for $w$ near $\infty$, $$g(w) \sim \lambda \lbrace \prod_{j=1}^{d-1} \overline{A_{j}} \rbrace  \ w^d$$ On the other hand, since $g(w) = P(f(w))$ we have that $$g(w) \sim a_d \alpha^d w^d$$ near $\infty$. Since $$|\lambda \prod_{j=1}^{d-1} \overline{A_{j}}| = |B(0)|$$ we obtain that $$|B(0)| = |a_d| \alpha ^d$$

By Proposition \ref{prop4} $$|a_d| \alpha^d = (\frac{\mathrm{cap}(\overline{D(P,1)})}{\mathrm{cap}(P^{-1}(\overline{\mathbb{D}}))})^{d} = |B(0)|$$
It follows that the problem of obtaining an upper bound for the capacity ratio is equivalent to obtaining an upper bound for $|B(0)|^{\frac{1}{d}}$, which we will now proceed to obtain. Since $f$ is an orientation preserving homeomorphism of the unit circle $\partial \mathbb{D}$ onto $$\partial D(P,1)$$ (here and always we assume $$\partial D(P,1)$$ is oriented so that the origin lies to the left, and similarly for $\partial \mathbb{D}$) and $P$ is an orientation preserving homeomorphism of $$\partial D(P,1)$$ onto $\partial \mathbb{D}$ we conclude that $$g= P \circ f$$ is an orientation preserving homeomorphism of $$\partial \mathbb{D}$$ onto itself, so that we necessarily have $$\frac{d}{d \theta} \arg{g(\exp(i \theta))} =\frac{d}{d \theta} \arg{(P\circ f )(\exp(i \theta))} = d - \frac{d}{d \theta} \arg{B(\exp(i \theta))} \geq 0$$
As $B$ is a finite Blaschke product, $$\frac{d}{d \theta} \arg{B(\exp(i \theta))} = |B'(\exp(i \theta))|$$ and we conclude $$d \geq |B'(\exp(i \theta))|$$ for every $\theta \in [0, 2 \pi]$, hence \begin{equation} \label{BlaschkeUpper} d \geq ||B'||_{ \partial \mathbb{D}} \end{equation}

For finite Blaschke products $\phi$ of degree $k$, and an arbitrary point $\lambda \in \partial \mathbb{D}$, the following representation is valid (see Theorem 8.1 of \cite{cowen1982inequalities}) \begin{equation} \label{CowenPomm} \sum_{j=1}^{k} \frac{1}{|\phi'(\zeta_{j})|} = \mathrm{Re}(\frac{\lambda + \phi(0)}{\lambda - \phi(0)}) \end{equation} where $\zeta_{j} ; j=1,...,k$ are the $k$ distinct points on $\partial \mathbb{D}$ for which $\phi(\zeta_{j}) = \lambda$.

Suppose $\phi(0) \neq 0$. Since the right-hand side of (\ref{CowenPomm}) is the Poisson kernel $$\frac{1 - |\phi(0)|^2}{|\lambda - \phi(0)|^2}$$ we conclude that the right-hand side of (\ref{CowenPomm}) is equal to  $$\frac{1 - |\phi(0)|}{1 + |\phi(0)|}$$ for $$\lambda = -\frac{\phi(0)}{|\phi(0)|}$$ 

On the other hand, the left-hand side of (\ref{CowenPomm}) is always greater than or equal to $$\frac{k}{||\phi'||_{\partial \mathbb{D}}}$$ We thus obtain the following inequality.

\begin{equation} \label{BlaschkeIneq} \frac{k}{||\phi'||_{\partial \mathbb{D}}} \leq \frac{1 - |\phi(0)|}{1 + |\phi(0)|} \end{equation}
for any degree $k$ Blaschke product $\phi$ satisfying $\phi(0) \neq 0$. Applying (\ref{BlaschkeIneq}) to $\phi = B$ we obtain \begin{equation} \label{BlaschkeLower} ||B'||_{\partial \mathbb{D}} \geq (d-1) \frac{1 + |B(0)|}{1 - |B(0)|} \end{equation}

Combining (\ref{BlaschkeUpper}) and (\ref{BlaschkeLower}) we find \begin{equation} \label{BlaschkeCombined} \frac{d}{d-1} \geq \frac{1 + |B(0)|}{1 - |B(0)|} \end{equation}

This is equivalent to \begin{equation} \label{BlaschkeFinalWithoutRoot} |B(0)| \leq (2d-1)^{-1} \end{equation} So that we obtain \begin{equation} \label{BlaschkeFinalWithRoot} |B(0)|^{\frac{1}{d}} \leq (2d-1)^{-\frac{1}{d}} \end{equation}

It remains only to show \begin{equation} \label{equalityCap} C(d)^{-1} \coloneq \frac{\mathrm{cap}(\overline{D(P^{\ast},t(P^{\ast}))})}{\mathrm{cap}({(P^{\ast})^{-1}(\overline{\mathbb{D}_{t(P^{\ast})}}))}}  = (2d-1)^{-\frac{1}{d}} \end{equation} We now proceed to prove this.

Let $$P^{\ast}(z) = z^d - z$$ and let $$t \coloneq t(P^{\ast}) = \frac{d-1}{d^{\frac{d}{d-1}}}$$  $$Q(u) \coloneq u(u-1)^{d-1}$$ It is easy to see that the pre-image of $\overline{D(Q,t^{d-1})}$ by the power map $$z \rightarrow u=z^{d-1}$$ is exactly $$\overline{D(P^{\ast},t)}$$ Hence, by Proposition \ref{prop4} we have $$\mathrm{cap}(\overline{D(P^{\ast},t)}) = \mathrm{cap}(\overline{D(Q,t^{d-1})})^{\frac{1}{d-1}}$$

We now compute $\mathrm{cap}(\overline{D(Q,t^{d-1})})$. Let $f$ be the conformal map of the exterior of the unit disk $\Delta$ onto the exterior of $D(Q,t^{d-1})$ fixing $\infty$ and satisfying $f(w) \sim c w$ near $w = \infty$, where $$c \coloneq \mathrm{cap}(\overline{D(Q,t^{d-1})})$$ For $w \in \partial \mathbb{D}$, $f(w) \in \partial D(Q,t^{d-1})$ so that $$|f(w)(f(w) - 1)^{d-1}| = t^{d-1}$$ Then by the Schwarz reflection principle, $g(w) \coloneq Q(f(w)) = f(w) (f(w) - 1)^{d-1}$ extends to a rational function $g : \overline{\mathbb{C}} \rightarrow \overline{\mathbb{C}}$. The zeros of $g$ in the exterior of the unit disk are precisely the pre-images (by $f$) of the zeros of $Q$ lying exterior to $D(Q,t^{d-1})$ by definition of $g$, and since $g$ is defined in the unit disk by Schwarz reflection, the zeros of $g$ lying in the unit disk are exactly the reflections in the unit circle of the poles of $g$ in the exterior of the unit disk. Since $g(w) = f(w)(f(w) - 1)^{d-1}$ and $f(w) \sim cw$ near $\infty$, $g$ has a pole of multiplicity $d$ at $\infty$. Since the only zero of $Q$ in the exterior of $D(Q,t^{d-1})$ is $1$, we conclude that $g$ has a zero at the unique point $W_0 \in \overline{\mathbb{C}} \setminus \overline{\mathbb{D}}$ for which $$f(W_0) = 1$$ Moreover, near $W_0$, we have $g(w) = f(w)(f(w) - 1)^{d-1} \sim A (w - W_0)^{d-1}$ where $A = f'(W_0)^{d-1}$, so that the multiplicity of $W_0$ as a zero of $g$ is exactly $d-1$. By symmetry, $g$ has a zero of multiplicity $d$ at $0$ and a pole of multiplicity $d-1$ at the point $\frac{1}{\overline{W_0}}$. Collectively, these four points comprise all the zeros and poles of $g$. It follows that $$g(w) = t^{d-1}\lambda w^d (\frac{w - W_0}{\overline{W_0}w - 1})^{d-1}$$ where $|\lambda| = 1$ is a unimodular constant. Note that the above construction (which is the same as the construction used in the first part of the proof) is essentially the argument used in the proof of the first part of Theorem 2.1 in \cite{Younsi2016} on page 4.

Now letting $f(w) \sim cw$ as $w \rightarrow \infty$, we have $g(w) \sim c^d w^d$ as $w \rightarrow \infty$, and $$\mathrm{cap}(\overline{D(Q,t^{d-1})})^{d} =c^d = t^{d-1}(\frac{1}{|W_0|})^{d-1}$$ so that $$\mathrm{cap}(\overline{D(Q,t^{d-1})}) = t^{\frac{d-1}{d}}|W_0|^{-\frac{d-1}{d}}$$ 

It remains to find $W_0$. Notice that $\frac{1}{d}$ is the unique critical point of $Q$ lying on $\partial D(Q,t^{d-1})$ (and is the only critical point of $Q$ besides $w = 1$). By the Schwarz reflection principle and conjugate symmetry of $\overline{D(Q,t^{d-1})}$, $f(z) = \overline{f(\overline{z})}$ necessarily maps $[1, +\infty)$ onto $[\frac{1}{d} ,+\infty)$ due to the normalization $f(w) \sim cw$ where $c > 0$ at $\infty$. In particular $W_0 \in (1,+\infty)$ is real. Now $$g'(w) = f'(w)(f(w)-1)^{d-1} + (d-1)f(w)f'(w) (f(w) -1)^{d-2} $$ $$= (f(w)-1)^{d-2} (f'(w) (f(w)-1) + (d-1)f(w) f'(w))  $$ $$= (f(w)-1)^{d-2} ( f'(w)) (d f(w) -1)) $$ and \begin{equation} \label{OutwardCorner} \lim_{w \in \Delta ; w \rightarrow 1 } |f'(w)| = 0 \end{equation} Where in the equality (\ref{OutwardCorner}) the limit is equal to $0$ because $f(1) = \frac{1}{d}$ is a corner of the curve $\partial D(Q,t^{d-1})$ of angle $\frac{3 \pi}{2}$ (of the exterior domain). We thus conclude that $$g'(1)=0$$

Now $$\frac{g'(w)}{g(w)} = \frac{d}{w} + \frac{d-1}{w - W_0} - \frac{(d-1)(W_0)}{W_0w -1}$$

Evaluating this at $w=1$ we find $$ d + (d-1)\lbrace \frac{1}{1-W_0} - \frac{W_0}{W_0-1} \rbrace = d + (d-1)\lbrace \frac{1+ W_0}{1-W_0} \rbrace = 0$$ Hence $$W_0 = (2d-1)$$

We conclude that $c = t^{\frac{d-1}{d}}(2d-1)^{-\frac{d-1}{d}}$ and $$C(d)^{-1} \coloneq \frac{\mathrm{cap}(\overline{D(P^{\ast},t(P^{\ast}))})}{\mathrm{cap}({(P^{\ast})^{-1}(\overline{\mathbb{D}_{t(P^{\ast})}}))}}  = \frac{c^{\frac{1}{d-1}}}{t^{\frac{1}{d}}} = \frac{t^{\frac{1}{d}}(2d-1)^{-\frac{1}{d}}}{t^\frac{1}{d}}  = (2d-1)^{-\frac{1}{d}}$$

This completes the proof of Theorem \ref{thm:5}.
\end{proof}

\begin{remark}
Theorem \ref{thm:5} (in fact, a more general comparison principle) can also be established by an argument involving condenser capacity and the so-called method of dis-symmetrization (due to Dubinin). Since this approach would require familiarity with these notions, for the purposes of keeping the proofs of this paper relatively self-contained, we omit this argument.
\end{remark}

\subsection{Proof of Corollary \ref{cor:1} from Theorem \ref{thm:2}} \label{sec2.3}
Let $P$ be Schlicht normalized , so that $t(P) = 1 = |P'(0)|$. Then $$\frac{1}{S(P,b)} = |b|$$ holds for any critical point $b \in \partial D(P,1) \cap Z(P') \equiv \partial D(P,t(P)) \cap Z(P')$ (in fact, this holds for any critical point $b \in Z(P')$ satisfying $|P(b)| = 1 = t(P)$). By Proposition \ref{prop4} we have $$\mathrm{cap}(P^{-1}(\overline{\mathbb{D}})) = (\frac{1}{|a_d|})^{\frac{1}{d}}$$ Since $$d |a_d| \prod_{i=1}^{d-1} |b_i| = |P'(0)| = 1$$ we obtain $$\mathrm{cap}(P^{-1}(\overline{\mathbb{D}})) = d^{\frac{1}{d}}(\prod_{i=1}^{d-1} |b_i|)^{\frac{1}{d}} = d^{\frac{1}{d}} (\frac{M(P,b)}{S(P,b)})^{\frac{d-1}{d}}$$ where the final equality follows from the definition of $M(P,b)$ (see Section \ref{Notation}) as $$M(P,b) = (\prod_{i=1}^{d-1} \frac{|b_i|}{|b|})^{\frac{1}{d-1}}$$ In the above computations, $$b_i ; i=1,...,d-1$$ denote the critical points of $P$ counted with multiplicity.

We thus obtain from Theorem \ref{thm:5} that \begin{equation} \label{eqn:3} \mathrm{cap} (\overline{D(P,1)}) \leq C(d)^{-1} d^{\frac{1}{d}} (\frac{M(P,b)}{S(P,b)})^{\frac{d-1}{d}} \end{equation} for any critical point $b \in \partial D(P,1) \cap Z(P')$. 

Recall that $C(d)^{-1} = (2d-1)^{-\frac{1}{d}}$ in Theorem \ref{thm:5}, we then have:

\begin{equation} \label{eqn:4} (\frac{S(P,b)}{M(P,b)})^{\frac{d-1}{d}} \mathrm{cap}(\overline{D(P,1)}) \leq C(d)^{-1} d^{\frac{1}{d}} = \frac{d^{\frac{1}{d}}}{(2d-1)^{\frac{1}{d}}}  \end{equation}
for any critical point $b \in Z(P') \cap \partial D(P,1) \equiv Z(P') \cap \partial D(P,t(P))$.

Theorem \ref{thm:2} and (\ref{eqn:4}) immediately imply Corollary \ref{cor:1}, which in turn implies the following

\begin{theorem}
 Let $P(z) = a_dz^d + ... + a_1z$ , $a_d, a_1 \neq 0$, $d \geq 2$, then $$U(P) + \frac{e^2}{4} (\frac{U(P)}{M(P)})^{\frac{d-1}{d}}  (\frac{2d-1}{d})^{\frac{1}{d}}  < 4 $$
\end{theorem}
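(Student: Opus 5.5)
The plan is to derive this statement directly from Corollary \ref{cor:1}, applied to a critical point chosen so that the left-hand side is as large as possible. First I would reduce to the Schlicht normalized case. Every quantity involved ($U(P)$, $M(P)$, $S(P,b)$, $M(P,b)$) is invariant under $P\mapsto \alpha P(\beta z)$. So we may assume $P(0)=0$ and $P'(0)=1=t(P)$, and let $b_1,\dots,b_{d-1}$ denote the critical points counted with multiplicity.

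Next I would choose the critical point. Pick $b^\ast\in Z(P')\cap\partial D(P,t(P))$ with $S(P,b^\ast)=U(P)$; this is possible because the set is finite and nonempty. Corollary \ref{cor:1} at $b^\ast$ then gives
$$U(P)+\frac{e^2}{4}\Bigl(\frac{U(P)}{M(P,b^\ast)}\Bigr)^{\frac{d-1}{d}}\Bigl(\frac{2d-1}{d}\Bigr)^{\frac1d}<4 .$$
By definition $M(P)=\max_{b\in Z(P')}M(P,b)\ge M(P,b^\ast)$. Hence $\bigl(U(P)/M(P)\bigr)^{\frac{d-1}{d}}\le \bigl(U(P)/M(P,b^\ast)\bigr)^{\frac{d-1}{d}}$, because the exponent $\frac{d-1}{d}$ is positive. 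Replacing $M(P,b^\ast)$ by $M(P)$ can therefore only decrease the left-hand side, and the strict inequality $<4$ survives. This is exactly the stated inequality.

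The argument is essentially immediate, so the only step needing care is the direction of monotonicity, and it goes the right way. The remaining check is that Corollary \ref{cor:1} really applies to every $b\in Z(P')\cap\partial D(P,t(P))$. That is what it asserts: it is obtained by combining (\ref{eqn:4}), which is valid for any such $b$, with Theorem \ref{thm:2}. Concretely, Theorem \ref{thm:2} gives $S\le 4-\frac{e^2}{4R}$ with $R=\mathrm{cap}(\overline{D(P,1)})$. Inequality (\ref{eqn:4}) gives $\frac1R\ge (S/M(P,b))^{\frac{d-1}{d}}\bigl(\frac{2d-1}{d}\bigr)^{\frac1d}$. Together these yield Corollary \ref{cor:1}, with strictness inherited from it. No additional obstacle is expected.
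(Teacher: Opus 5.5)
Your proposal is correct and follows the paper's route: the paper states that this theorem follows from Corollary~\ref{cor:1}, which it obtains from Theorem~\ref{thm:2} and (\ref{eqn:4}). You do exactly that, applying it at the boundary critical point realizing $U(P)$ and using $M(P)\ge M(P,b^\ast)$ together with the positive exponent $\frac{d-1}{d}$. The preliminary Schlicht normalization is harmless but unnecessary, since Corollary~\ref{cor:1} is already stated for arbitrary $P$ with $P(0)=0$, $P'(0)\neq 0$.
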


\medskip  

\subsection{Proofs for Theorems \ref{thm:1} and \ref{thm:2}} \label{sec2.4}

The proof of Theorems \ref{thm:1} and \ref{thm:2} will require the below (Proposition \ref{Polya}) classical result of P\'olya (see e.g. \cite{ransford1995potential} page 141, Theorem 5.3.5), as well as a strengthened version of the Koebe $\frac{1}{4}$-Theorem for Schlicht functions mapping the unit disk onto sets of bounded logarithmic capacity by Cunningham (Theorem \ref{Cunningham}).

\begin{proposition}\label{Polya}
Let $f : \mathbb{D} \rightarrow \mathbb{C}$ be a Schlicht univalent function. That is, $f$ is univalent and satisfies $f'(0) = 1$. Then,
$$\mathrm{cap}(\overline{f(\mathbb{D})}) \geq 1$$
Equality in the above inequality holds if and only if $f$ is the identity function.
\end{proposition}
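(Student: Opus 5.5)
The plan is the standard route through the exterior map and Grunsky's area theorem. Set $K=\overline{f(\mathbb{D})}$. Since $f$ is univalent and $f(\mathbb{D})$ is a bounded simply connected domain (the case of interest; if $f(\mathbb{D})$ is unbounded then $\mathrm{cap}(K)=\infty$ and there is nothing to prove), the complement $\overline{\mathbb{C}}\setminus K$ is connected, possibly after filling holes, which does not change capacity. Its unbounded component admits a Riemann map
\[
h:\Delta\to\overline{\mathbb{C}}\setminus \widehat K, \qquad h(w)=\rho w+b_0+\frac{b_1}{w}+\cdots,
\]
with $\rho=\mathrm{cap}(K)$. So the task is to compare the interior map $f$, normalised by $f'(0)=1$, with this exterior map.

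I would use the subordination/Grunsky form of the area argument, namely the classical Lebedev--Milin type inequality. Equivalently, and more simply, I would use the monotonicity of Green's functions. Let $G(z)=g_{\Omega}(z,\infty)$ be the Green function of $\Omega=\overline{\mathbb{C}}\setminus\widehat K$, so that $G(z)=\log|z|-\log\rho+o(1)$ at $\infty$. Then consider the function
\[
u(\zeta)=\log\Bigl|\frac{f(\zeta)}{\zeta}\Bigr|
\]
on $\mathbb{D}$. The cleaner path is the known identity $\log\mathrm{cap}(K)\ge \log r_{\mathrm{in}}$, where $r_{\mathrm{in}}=|f'(0)|$ is the conformal (inner) radius of $f(\mathbb{D})$ at $0$. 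This follows from the comparison principle for the condenser $(\overline{D_\epsilon}, K)$: as $\epsilon\to 0$, the modulus of the annulus $K\setminus \overline{\mathbb{D}_\epsilon}$ satisfies the superadditivity
\[
\frac{1}{2\pi}\log\frac{\mathrm{cap}(K)}{\epsilon}\ \ge\ \bigl(\text{modulus of } f(\mathbb{D})\setminus\overline{\mathbb{D}_\epsilon}\bigr)+\bigl(\text{modulus of }\overline{\mathbb{C}}\setminus K\text{ relative to }\infty\bigr)\text{ terms},
\]
and the second piece is nonnegative. Concretely, I would write the Grötzsch/Teichmüller superadditivity of reduced moduli: the reduced modulus of $\overline{\mathbb{C}}\setminus\{0\}$ split by the curve $\partial K$ gives
\[
\log\mathrm{cap}(K)-\log|f'(0)| \;=\; \frac{1}{2\pi}\iint(\text{nonnegative Dirichlet defect})\ \ge 0 .
\]
A hands-on version: the function $v=G\circ f$ is not defined inside, so instead take $\phi=h^{-1}$ extended by $\phi(z)=\infty$-reflection. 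Then $\psi(\zeta)=1/\overline{\phi(f(1/\bar\zeta))}$ is not available, so I would directly apply the Grunsky area inequality to the pair $(f,h)$ of nonoverlapping univalent maps, $f(\mathbb{D})\cap h(\Delta)=\emptyset$. This gives $|f'(0)|\le \rho$, with equality iff the two images fill the sphere up to a null set, and their coefficient series vanish beyond the leading ones, i.e.\ $f(\zeta)=\zeta$ and $h(w)=w$.

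The main obstacle is the equality case. I expect to handle it through the area theorem's explicit defect: the inequality $\rho^2-|f'(0)|^2\ge\sum n|b_n|^2+\sum n|a_{n+1}|^2$-type bound, obtained from the area of $\overline{\mathbb{C}}$ minus both images. Equality forces all $a_n=0$ for $n\ge2$, so $f$ is the identity. Since the proposition is classical (Ransford, Thm.~5.3.5), I would ultimately cite it, presenting only this sketch.
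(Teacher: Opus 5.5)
Your proposal is correct. It differs from the paper only in that the paper gives no argument at all: Proposition~\ref{Polya} is simply quoted from Ransford (Theorem 5.3.5).

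\textbf{The argument that works.} The part of your sketch that actually proves the statement is the final area comparison. Write $f(\zeta)=\zeta+\sum_{n\ge2}a_n\zeta^n$, with $f(0)=0$ as implicit in ``Schlicht''. Write the exterior map as $h(w)=\rho w+b_0+\sum_{n\ge1}b_nw^{-n}$. The inclusion $f(\mathbb{D})\subset\widehat K=\mathbb{C}\setminus h(\Delta)$, together with Gronwall's area theorem for $h$, gives
\[
\pi\Bigl(1+\sum_{n\ge2}n|a_n|^2\Bigr)=\mathrm{area}\,f(\mathbb{D})\le\mathrm{area}\,\widehat K=\pi\Bigl(\rho^2-\sum_{n\ge1}n|b_n|^2\Bigr).
\]
Hence
\[
\rho^2-1\ge\sum_{n\ge2}n|a_n|^2+\sum_{n\ge1}n|b_n|^2\ge0 .
\]
Two small corrections:
\begin{itemize}
\item The weight is $n$ on $a_n$, not on $a_{n+1}$. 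Either version suffices for your purpose.
\item No Grunsky inequality is involved; this is just the area theorem.
\end{itemize}
The equality case, which you single out as the main obstacle, is immediate. If $\rho=1$, then $a_n=0$ for all $n\ge2$, so $f(\zeta)=\zeta$ (or $\zeta+f(0)$ without the normalisation $f(0)=0$). Conversely, $f=\mathrm{id}$ gives $\mathrm{cap}(\overline{\mathbb{D}})=1$. The unbounded case and the filling of holes are handled correctly, since $\mathrm{cap}(K)=\mathrm{cap}(\widehat K)$.

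\textbf{What to cut.} Everything before the area comparison should be removed, because none of it is an argument:
\begin{itemize}
\item The ``known identity'' $\log\mathrm{cap}(K)\ge\log r_{\mathrm{in}}$ is the proposition itself.
\item The displayed condenser inequality contains unspecified ``terms''.
\item The ``Dirichlet defect'' equality is not a formula you have defined.
\item The functions $u$ and $\psi$ are introduced and then abandoned.
\end{itemize}
The Gr\"otzsch route can be made rigorous, via superadditivity of the moduli of $f(\mathbb{D})\setminus\overline{\mathbb{D}_\epsilon}$ and $(\mathbb{C}\setminus\widehat K)\cap\mathbb{D}_R$ inside $\{\epsilon<|z|<R\}$. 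It is unnecessary here, and its equality case is harder.

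\textbf{Comparison with the paper.} The paper's citation is economical and covers the equality statement by reference. Your area proof is two lines and self-contained. It also yields a stability form of P\'olya's inequality, $\mathrm{cap}(\overline{f(\mathbb{D})})^2\ge 1+\sum_{n\ge2}n|a_n|^2$, and in particular $|a_2|\le\sqrt{(R^2-1)/2}$. The bare citation does not provide this, and it is the kind of coefficient control the paper uses elsewhere, for instance in the proof of Theorem~\ref{thm:2}.
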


The following refined Koebe $\frac14$--type theorem of Cunningham for Schlicht functions whose images have controlled logarithmic capacity plays the crucial role in the proof of Theorems \ref{thm:1} and  \ref{thm:2}.

\begin{theorem}[Cunningham 1993 \cite{Cunningham1993}]\label{Cunningham}
For every $R > 1$, let $\mathcal{S}_{R}$ denote the class of Schlicht functions (i.e. univalent functions $f: \mathbb{D} \rightarrow \mathbb{C}$ fixing the origin and satisfying $f'(0) = 1$), whose images have logarithmic capacity $R$.
Then every function $f \in \mathcal{S}_{R}$ maps $\mathbb{D}$ onto a region containing the open disk about the origin with radius 
\begin{equation}\label{rA}
   r = \frac{A-1}{4A}(\frac{\sqrt{A} + 1}{\sqrt{A}-1})^{\frac{1}{\sqrt{A}}} 
\end{equation}
where $A$ is the unique positive value satisfying the relation 
\begin{equation}\label{RA}
R = \frac{(A-1)^2}{16 A} ( \frac{ \sqrt{A} + 1}{ \sqrt{A} - 1})^{\frac{1}{\sqrt{A}} + \sqrt{A}} 
\end{equation}
\end{theorem}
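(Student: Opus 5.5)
The plan is to recast the statement as a dual extremal problem and solve that problem by symmetrization followed by an explicit extremal map. Take $f\in\mathcal S_R$, let $\Omega=f(\mathbb D)$, and let $w_0\in\partial\Omega$ be a boundary point of minimal modulus $\rho=\operatorname{dist}(0,\partial\Omega)$. After a rotation (which preserves both the normalization $f'(0)=1$ and the capacity) we may assume $w_0=-\rho$. The claim $\rho\ge r(A)$ is then equivalent to a lower bound on capacity: among all simply connected domains $\Omega\ni0$ with conformal radius $1$ at $0$ and with $-\rho\notin\Omega$, one has $\mathrm{cap}(\overline\Omega)\ge\Phi(\rho)$ for an explicit function $\Phi$. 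We then check that $\Phi$ is decreasing in $\rho$ and that $\Phi(r(A))=R(A)$, where $r(A)$ and $R(A)$ are given by (\ref{rA}) and (\ref{RA}).

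\textbf{Step 1 (symmetrization).} Apply circular symmetrization to $\Omega$ with respect to the positive real axis. Each arc $\Omega\cap\{|w|=s\}$ is replaced by an arc of the same length centred on the positive axis. By the Pólya--Szeg\H{o} theory, the inner conformal radius at $0$ does not decrease, and the logarithmic capacity of the closure does not increase. Since $-\rho\notin\Omega$, the circle $\{|w|=\rho\}$ meets the complement, so after symmetrization the point $-\rho$ is still omitted. We then rescale the symmetrized domain by a factor $\lambda\le1$ to restore conformal radius $1$; this decreases capacity and moves the omitted point inward. Since we will show $\Phi$ is decreasing, this only helps, and we are reduced to domains that are circularly symmetric about the positive axis and omit $-\rho$. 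A further monotonicity argument, deleting the part of the complement off the negative axis inside $|w|<\rho$, lets us also assume the complement contains the radial slit $(-\infty,-\rho]\cap\overline{\mathbb C}\setminus\Omega$.

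\textbf{Step 2 (extremal domain).} Among these symmetric domains we minimize capacity. I would use a Schiffer-type boundary variation. The constraints are fixed conformal radius at $0$ and the omitted point $-\rho$, so the extremal boundary should consist of trajectories of a quadratic differential with a double pole at $0$, a simple zero at $-\rho$, and the behaviour $Q\sim c\,dw^2/w^2$ at $\infty$. This suggests the ansatz
\[
Q(w)\,dw^2=-\frac{(w+\rho)}{w^2(w-a)}\,dw^2 ,\qquad a>0.
\]
The extremal domain should therefore be a Jordan-type region with an inward slit along $[-\rho,-\beta]$ for some $\beta>\rho$. Its Riemann map can be written in closed form by integrating $\sqrt{Q}$, and this is where the expressions $\bigl(\tfrac{\sqrt A+1}{\sqrt A-1}\bigr)^{1/\sqrt A}$ arise. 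The parameter $A$ encodes the ratio fixed by the position of the zero.

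\textbf{Step 3 (computation).} Compute the conformal radius at $0$, the capacity, and the tip $-\rho$ of the extremal domain as functions of $A$. Normalize the conformal radius to $1$, then check that capacity equals (\ref{RA}) and the tip distance equals (\ref{rA}). Finally verify that $R(A)$ is strictly monotone, so that $A$ is unique, and that $r$ and $R$ move in the direction needed for the monotonicity of $\Phi$ used in Step 1.

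\textbf{Main obstacle.} I expect the hardest step to be Step 2: rigorously showing that the minimizer exists (by compactness in the Carathéodory kernel topology) and that it is exactly the slit-plus-trajectory domain described above. The explicit integration in Step 3 should then be routine but tedious.
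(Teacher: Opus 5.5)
The paper does not prove this statement. It is quoted from Cunningham's 1993 paper and only used, after reparametrization, in the proofs of Theorems~\ref{thm:1} and~\ref{thm:2}, so your proposal has to stand on its own.

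Its architecture is reasonable. The dual problem is the right move: minimize $\mathrm{cap}(\overline\Omega)$ over domains of conformal radius $1$ at $0$ that omit $-\rho$. Existence of a minimizer by kernel compactness also works there, because the omitted point survives in the kernel limit and capacity can only drop in the limit; the pinching phenomenon described before Lemma~\ref{lem:10} is harmless for a minimization.

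\textbf{The gap: your quadratic differential in Step~2 is wrong.} This is the heart of the proof.
\begin{itemize}
\item A simple zero has three trajectory arcs issuing from it, so it cannot be the free end of a slit. The tip $-\rho$ must be a simple \emph{pole}.
\item Your $Q=-\frac{w+\rho}{w^2(w-a)}$ has leading coefficients $\rho/a>0$ at $0$ and $-1$ at $\infty$ (in units of $dw^2/w^2$). So small circles about $0$ are orthogonal trajectories while large circles are trajectories. No closed boundary curve can then bound circle domains on both sides, which is what the interior and exterior Green functions require.
\item The simple pole at $a>0$ would have to be the free end of a boundary arc on the positive axis. That is impossible for a domain circularly symmetrized about that axis.
\end{itemize}
Integrating $\sqrt{Q}$ for this $Q$ cannot produce (\ref{rA})--(\ref{RA}), so Step~3 fails as designed.

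\textbf{What the variational step actually gives.} Let $F=2\partial_w g_\Omega(w,0)$ and $G=2\partial_w g_{\Omega_e}(w,\infty)$, with $\Omega_e$ the exterior of $\overline\Omega$. Stationarity of $\log\mathrm{cap}(\overline\Omega)-\mu\log r(\Omega,0)$ under perturbations of the free boundary gives $F^2=\mu G^2$ there. So $F^2$ and $\mu G^2$ glue to a rational $\Psi$.
\begin{itemize}
\item $\Psi$ is a perfect square in $\Omega$ and in $\Omega_e$, so its odd-order points lie on $\partial\Omega$.
\item Only the constrained point $-\rho$ can carry a pole, and it is simple.
\item The degree count on the sphere then leaves exactly one simple zero.
\item The coefficient $1$ of $w^{-2}$ at $0$ places that zero at $-\rho/\mu$. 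Here $\mu=1/A<1$, because the zero lies on $\partial\Omega$ and $-\rho$ is the nearest boundary point.
\end{itemize}
Hence
\[
\Psi(w)\,dw^2=\frac{w+A\rho}{A\,w^2(w+\rho)}\,dw^2 ,
\]
and the slit $[-A\rho,-\rho]$ runs from the zero (your $\beta=A\rho$) to the pole.

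The Riemann map satisfies $\Psi(f(z))f'(z)^2=z^{-2}$. With $k=\sqrt A$ and $t=\sqrt{(w+A\rho)/(w+\rho)}$ this integrates to $z=c\bigl(\frac{t+1}{t-1}\bigr)^{1/k}\frac{k-t}{k+t}$.
\begin{itemize}
\item $f'(0)=1$ forces $c=\frac{4A\rho}{A-1}\bigl(\frac{k-1}{k+1}\bigr)^{1/k}$.
\item $|z|=1$ at the tip $t=\infty$ forces $|c|=1$, i.e.\ $\rho=r(A)$.
\item The exterior map obeys $\Psi(g)g'^2=(A\zeta^2)^{-1}$, which gives $\mathrm{cap}(\overline\Omega)=\frac{(A-1)\rho}{4}\bigl(\frac{k+1}{k-1}\bigr)^{k}=R(A)$.
\end{itemize}

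You would still have to make the variation rigorous. Since no boundary regularity is known a priori, use Schiffer interior variations normalized to fix $0$ and $-\rho$. You also need to show that this trajectory structure determines the domain uniquely.

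A minor point: the ``deleting part of the complement'' step in Step~1 is vacuous. After symmetrization the complement automatically contains $(-\infty,-\rho]$, because $\mathbb{C}\setminus\Omega$ is connected and unbounded and so meets every circle $|w|=s$ with $s\ge\rho$.
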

\medskip
\noindent
{\it Proof of Theorem \ref{thm:1}.} We will actually prove that Theorem \ref{thm:1} still holds when $S(P)$ is replaced by $U(P)$. This is a stronger result as $S(P)\le U(P)$.\\

Since $P$ has the Schlicht normalization. The map $$f:= P^{-1}:\mathbb{D}\to \Omega:=D(P,1)$$ is a univalent function with $f(0)=0$ and $f'(0)=1$. Cunningham’s theorem (Theorem \ref{Cunningham}) describes the optimal pair $(r,R)$ consisting of the radius $r$ of the largest centered Euclidean disk contained in $\Omega$ and the capacity $R = \mathrm{cap}(\overline{\Omega})$, in terms of a parameter $A>1$.  Let $t := \sqrt{A}$ and $s=\frac{t-1}{t+1}$ so that $t=\frac{1+s}{1-s}$, $1-\frac{1}{t^2}=\frac{4s}{(s+1)^2}$ and $t+\frac{1}{t}= \frac{2(1+s^2)}{1-s^2}$. Then, $0<s<1$ and 
$$r= \frac{A-1}{4A} \cdot 
(\frac{\sqrt{A}+1}{\sqrt{A}-1})^{1/\sqrt{A}}=\frac{1}{4}(1-\frac{1}{t^2}) \cdot 
(\frac{t+1}{t-1})^{\frac{1}{t}}=\frac{s^{ \frac{2s}{1+s}}}{(1 + s)^2}$$
and 
$$R = \frac{(A-1)^2}{16A} \cdot (\frac{\sqrt{A}+1}{\sqrt{A}-1})^{1/\sqrt{A} + \sqrt{A}}=\frac{1}{16}(1-\frac{1}{t^2})^2 \cdot t^2 \cdot (\frac{t+1}{t-1})^{\frac{1}{t} + t}=\frac{s^{\frac{-4s^2}{1-s^2}}}{(1-s^2)^2}.$$

Notice that $r$ decreases with $s$. This follows by differentiating $\ln r(s) = (2 s/(1 + s)) \ln s - 2 \ln(1 + s)$, which gives $d/ds\ln r(s) = 2 \ln s/(1 + s)^2 < 0$ for $s \in (0,1)$, hence $r'(s) < 0$. Also, $r(s)\to 1$ as $s\to 0+$ and $r(s)\to 1/4$ as $s\to 1$.Similarly, one can check that $R$ strictly increases with $s$, $R(s)\to 1$ as $s\to 0+$ and $R(s)\to \infty$ as $s\to 1$.\\

Apply P\'{o}lya’s result (Proposition \ref{Polya}) to $f$, one has $1< R(s)=\mathrm{cap}(\overline{\Omega})=\mathrm{cap}(\overline{f(\mathbb{D})}) \leq \mathrm{cap}(P^{-1}(\overline{\mathbb{D}})) = c^{-1/d}$. Since $R$ strictly increases with $s$ and $R(s)\le C:=c^{-1/d}$,  there exists some $s_{\max}(C)$ such that $s \leq s_{\max}(C)$. For example, one can take $s_{\max}(C)$ to be $R^{-1}(C)$. Since $r$ decreases with $s$, the inclusion $\Omega \supset \mathbb{D}(0, r(s))$ improves as $s$ decreases. Therefore, if we can produce an explicit upper bound $s \leq s_{\max}(C)$ depending only on the capacity bound $C$, then we will have $r(s) \geq r(s_{\max}(C))$. 
Since $f(\mathbb{D})=D(P,1)$ contains the disk $\mathbb{D}(0,r(s_{\max}(C)))$, then for every critical point $b$ on $\partial D(P,1)$ $$S(P,b) = |P(b)| / (|b| |P'(0)|) = 1 / (|b|\cdot 1) \leq 1/r(s_{\max}(C))$$ Taking the maximum (of $S(P,b)$) over all critical points $b \in \partial D(P,1) \cap Z(P')$ gives
\begin{equation}\label{smax}
U(P) \leq \frac{1}{r(s)} \le \frac{1}{r(s_{\max}(C))} 
\end{equation}

To produce such an explicit upper bound $s_{\max}(C)$, we first rewrite $R$ as a function in $\lambda$ where $e^{-\lambda} = s^2$, i.e.,
$R=\exp( 2 \lambda/(e^{\lambda}-1))/(1-e^{-\lambda})^2$ which is a decreasing function on $(0,\infty)$. Hence, there is a unique parameter $\lambda_C$ such that $R(\lambda_C) = C$. Using the elementary inequality $e^{\lambda} - 1 \leq \lambda e^{\lambda}$ for $\lambda \geq 0$ (equivalently $1 - e^{-\lambda} \leq \lambda$), we obtain a lower bound
$R(\lambda) \geq \exp(2e^{-\lambda})/(1-e^{-\lambda})^2$ and hence $$C\ge \frac{\exp(2e^{-\lambda_C})}{(1-e^{-\lambda_C})^2}.$$\\
Taking square root gives
$$\sqrt{C}(1-e^{-\lambda_C})\ge \exp(e^{-\lambda_C})=e\exp(-(1-e^{-\lambda_C}))$$
and hence $(1-e^{-\lambda_C})\exp(1-e^{-\lambda_C})\ge \frac{e}{\sqrt{C}}$.
Since the principal Lambert function $W_0$ (the inverse of $w \mapsto w e^w$ on $[0, \infty)$) is strictly increasing, this means
$1-e^{-\lambda_C} \geq W_0(e/\sqrt{C})$. Recall that  $s^2=e^{-\lambda}$, we then conclude that any admissible $s$ must satisfy
$s \leq s_{*} := \sqrt{1 - W_0(e/\sqrt{C})} = \sqrt{1 - W_0(e c^{1/(2d)})}$. Because $r$ decreases with $s$, it follows that
$r(s) \geq r(s_{*}) = s_{*}^{ 2 s_{*}/(1 + s_{*}) } / (1 + s_{*})^2$, and therefore
$U(P) \leq 1/r(s) \leq (1 + s_{*})^2 / s_{*}^{ 2 s_{*}/(1 + s_{*}) }$. This proves i).\\

To prove ii), we simply drop the exponential factor in $R(s)$. Since $s \in (0,1)$ implies $s^{ - 4 s^2/(1 - s^2) } \geq 1$, we have $R(s) \geq 1/(1 - s^2)^2$. If $R(s) \leq C$, then $1 - s^2 \geq C^{-1/2}$, i.e. $s \leq \sqrt{1 - C^{-1/2}} = \sqrt{1 - c^{1/(2d)}}$. Calling this upper bound $\hat{s}$, the same monotonicity argument gives $r(s) \geq r(\hat{s})$, and by (\ref{smax}), we have 
$U(P) \leq g(\hat{s})$ with $\hat{s} = \sqrt{1 - c^{1/(2d)}}$.\\

For iii), as $c \to 0$ (so $C \to \infty$) one has $W_0(e/\sqrt{C}) \to 0$ and $s_{*} \to 1-$, hence $r(s_{*}) \to 1/4$ and $g(s_{\ast}) \to 4$. \\

To prove iv) and v), we first show that under some conditions, 
$$1 - r(s) \leq \sqrt{ (\ln C) \ln(2e/\ln C) }.$$

Recall that the capacity $R$ of $\overline{\Omega}=\overline{f(\mathbb{D})}=\overline{D(P,1)}$ can be expressed as $$R=\exp( 2 \lambda/(e^{\lambda}-1))/(1-e^{-\lambda})^2$$ which is strictly decreasing in $\lambda >0$ and $\lambda_{C}$ is defined by $$R(\lambda_C) = C = c^{-1/d}>1$$ Since $\mathrm{cap}(\overline{\Omega}) \le C$, we have $\lambda \ge \lambda_C$.

By the elementary bounds $1/(e^\lambda - 1) \geq e^{-\lambda}$ and $-\ln(1 - y) \geq y$ ($0 < y < 1$), we have
  \begin{align*}
  \ln R(\lambda) &= 2 \lambda/(e^\lambda - 1) - 2 \ln(1 - e^{-\lambda}) \\
          &\geq 2 \lambda e^{-\lambda} + 2 e^{-\lambda} \\
          &= 2 e^{-\lambda} (1 + \lambda).
  \end{align*}
Hence, at $\lambda = \lambda_C$ this gives
\begin{equation}\label{L_C}
2 e^{-\lambda_C} (1 + \lambda_C) \leq \ln C = L_c/d.
\end{equation}

Notice that the function $h(x) := (1 + x) e^{-(1 + x)}$ is strictly decreasing on $[0, \infty)$, with range $(0, 1/e]$. If $\ln C \leq 2$, then there exists a unique $\lambda^* \geq 0$ such that 
\begin{equation}\label{lambda*}
    (1 + \lambda^*) e^{-(1 + \lambda^*)} = (\ln C)/(2e) = (L_c/d)/(2e)
\end{equation}

From (\ref{L_C}) and the monotonicity of $h$ we get $\lambda_C \geq \lambda^*$. Since $\lambda \geq \lambda_C$ for every $\overline{\Omega}=\overline{f(\mathbb{D})}=\overline{D(P,1)}$, we have $\lambda \geq \lambda^*$.

Now we try to bound $1 - r(s)$ in terms of $\lambda$. Recall that $r(s) = s^{2s/(1+s)}/(1 + s)^2$ and $\ln s = -\lambda/2$. We have
  \begin{align*}
  1 - r(s) &= 1 - (1 + s)^{-2} + (1 + s)^{-2} [1 - s^{2s/(1+s)}] \\
           &\leq 2s/(1 + s) + (2s/(1 + s)) \cdot (\lambda/2) \\
           &\leq s (2 + \lambda)=e^{-\lambda/2}(2 + \lambda). 
  \end{align*}
  Here we used $1 - (1 + s)^{-2} \leq 2s/(1 + s)$ and $1 - e^{-x} \leq x$ with $x = (2s/(1 + s))\cdot(-\ln s) = (2s/(1 + s))\cdot(\lambda/2)$. Since $s = e^{-\lambda/2}$ and $q(\lambda) := e^{-\lambda/2} (2 + \lambda)$ has derivative $q'(\lambda) = -(\lambda/2) e^{-\lambda/2} \leq 0$, $q$ is decreasing on $[0, \infty)$. Using $\lambda \geq \lambda^*$, we have
  \begin{equation}\label{r}
  1 - r(s) \leq e^{-\lambda^*/2} (2 + \lambda^*).
\end{equation}

Multiplying both sides of (\ref{lambda*}) by $e$ gives $e^{-\lambda^*} (1 + \lambda^*) = (\ln C)/2$. Hence
  $e^{-\lambda^*/2} = \sqrt[ ]{ (\ln C) / (2(1 + \lambda^*)) }$.
  Plugging this into (\ref{r}), we have
\begin{equation}\label{r2}
1 - r(s) \leq (2 + \lambda^*) \sqrt[ ]{ (\ln C) / (2(1 + \lambda^*)) }.  
\end{equation} 
We now compare $(2 + \lambda^*)$ with $(1 + \lambda^*) - \ln(1 + \lambda^*)$. For $y := 1 + \lambda^* \geq 6$ one can check that 
\begin{equation}\label{y}
(y + 1)^2 \leq 2y(y - \ln y) 
\end{equation}
Indeed, $F(y) := 2y(y - \ln y) - (y + 1)^2$ has $F'(y) = 2(y - \ln y - 2) \geq 0$ for $y \geq 6$ and $F(6) > 0$. 

Setting $y = 1 + \lambda^*$ in (\ref{y}) will give
  $$(2 + \lambda^*)^2 \leq 2(1 + \lambda^*)[(1 + \lambda^*) - \ln(1 + \lambda^*)].$$
  Taking square roots and inserting into (\ref{r2}),
  $1 - r(s) \leq \sqrt{ (\ln C) \cdot ((1 + \lambda^*) - \ln(1 + \lambda^*)) }$.

Taking logs in (\ref{lambda*}) yields $(1 + \lambda^*) - \ln(1 + \lambda^*) = \ln(2e/\ln C)$. Therefore, 

\begin{equation}\label{r3}
1 - r(s) \leq \sqrt{ (\ln C) \ln(2e/\ln C) }
\end{equation}
provided that $y=1 + \lambda^* \geq 6$.\\

It remains to check that $\ln C \le 2$ and $y=1 + \lambda^* \geq 6$ for the ranges for $d$ assumed in iv) and v).

For iv) , we assume $d \geq 15 L_c$ and hence $\ln C  \leq 1/15\le 2$. From (\ref{lambda*}), $y e^{-y} = (\ln C)/(2e)$ Therefore, $(\ln C)/(2e) \leq 1/(30e) \approx 0.01226$, where $6 e^{-6} \approx 0.01487$. Since $ye^{-y}$ is a decreasing function  on $[1, \infty)$, we have $y \geq 6$.\\

For v), we assume $d \geq \max\{L_c^2, 169\}$, so $\ln C \leq 1/\sqrt{d} \leq 1/\sqrt{169}\approx 0.07692 \le 2$. Therefore, $(\ln C)/(2e) \leq 1/(26e) \approx 0.01415 < 0.0149$. We can again conclude that $y \geq 6$.\\

Furthermore, one can check that $\sqrt{(\ln C) \ln(2e/\ln C)} < 2/3$ for the ranges of $d$ assumed in iv) and v).
Since  (\ref{r3}) gives $r(s)\geq 1-\sqrt{(\ln C) \ln(2e/\ln C)}$, we can use $1/(1 - x) \leq 1 + 3x$ for $x \leq 2/3$ to get
  \begin{align*}
  1/r(s) &\leq 1 + 3 \sqrt{ (\ln C) \ln(2e/\ln C) } = 1 + 3 \sqrt{ (L_c/d) \ln((2ed)/L_c) }. 
  \end{align*}

Therefore, if $d \geq  15 L_c$ or $d \geq \max\{L_c^2, 169\}$, then from $U(P) \le 1/r(s)$, we have
\begin{equation}\label{S1}
  U(P) \leq 1 + 3 \sqrt{ (\ln C) \ln(2e/\ln C) } = 1 + 3 \sqrt{ \frac{L_c\ln((2ed)/L_c)}{d}}. 
\end{equation}

For v), since $d \geq L_c^2$, $L_c \leq \sqrt{d}$ and $\ln C = L_c/d \leq 1/\sqrt{d}$. One can check that the function $x \mapsto x \ln(2e/x)$ is increasing on $(0,1]$ and hence
  $\sqrt{ (\ln C)\ln(2e/\ln C)} \leq \sqrt{ (1/\sqrt{d}) \ln(2e\sqrt{d})}= d^{-1/4} \sqrt{ \ln(2e\sqrt{d}) }$.

From (\ref{S1}),
   \begin{equation}
  U(P) \leq  1 + 3 d^{-1/4} \sqrt{\ln(2e\sqrt{d})}\,\to 1,\quad \mathrm{as}\quad d \to \infty.
  \end{equation}
       
In particular, if $c \geq e^{-\sqrt{d}}$ and $d \geq 169$, then $d \geq \max\{L_c^2, 169\}$ and 
$$3 d^{-1/4} \sqrt{\ln(2e\sqrt{d})} \le 3 d_0^{-1/4} \sqrt{\ln(2e\sqrt{d_0})}\approx 1.7169\ldots$$ where $d_0=169$. Therefore, $U(P) \leq 2.7169\ldots$ and hence $S(P,b) \leq 2.7169\ldots$ for every $b \in Z(P') \cap \partial D(P,1)$\\

\noindent
{\it Proof of Theorem \ref{thm:2}.} Like Theorem \ref{thm:1}, we will also prove that Theorem \ref{thm:2} still holds when $S(P)$ is replaced by $U(P)$. We first prove that for any $R>1$, 
\begin{equation}\label{C}
U(P) \leq 2 + \log(A_d/R).
\end{equation}

By the Markov-type inequality of Eremenko, we have for a degree-$d$ polynomial $P$ and $\Omega=D(P,1)$,
\begin{equation}\label{P'}
    \sup_{z\in \overline{\Omega}} |P'(z)| \cdot \mathrm{cap}(\overline{\Omega}) \leq A_d
\end{equation}

Passing to $f=P^{-1}=z+a_2z^2+\cdots$ on $\mathbb{D}$ gives $|f'(\zeta)| = 1/|P'(f(\zeta))|\geq R/A_d$ for all $ \zeta\in\mathbb{D}$. Since $f'$ has no zeros on $\mathbb{D}$, we can define $g:=\log f'$ holomorphic on $\mathbb{D}$. Then $\operatorname{Re} g(\zeta) = \log|f'(\zeta)|\geq\log(R/A_d) $ for all $\zeta\in\mathbb{D}$.\\

Let $ \phi(\zeta) := g(\zeta) - \log(R/A_d) $. Then $ \phi $ is holomorphic on $\mathbb{D}$, $ \operatorname{Re} \phi \geq 0 $, and $ \phi(0)= -\log(R/A_d) \geq 0 $ because of (\ref{ieq:R}). Carathéodory’s lemma (for functions with nonnegative real part, see page 41, section 2.5 of Duren's book \cite{Duren1983}) gives $ |\phi'(0)| \leq 2 \operatorname{Re} \phi(0) = -2 \log(R/A_d)$. But $ \phi'(0) = g'(0) = f''(0)/f'(0) = 2 a_2 $, so $$ |a_2| \leq -\log(R/A_d) = \log(A_d/R)$$\\

The usual Koebe-quarter type inequality for Schlicht functions gives $ r \geq 1/(2+|a_2|)$ where $r$ is the largest radius of a disk centered at $0$ and lying inside $\Omega$. Therefore
$$U(P) \leq 1/r \leq 2 + |a_2| \leq 2 + \log(A_d/R).$$

We now prove that \begin{equation}\label{M}
U(P) \leq A_d/R.
\end{equation}

From (\ref{P'}), we have $\sup_{\Omega} |P'| \leq A_d/R$. Let $a$ be a critical point of $P$ which is also a boundary point of $\Omega$. Let $\alpha$ be a boundary point of $\Omega$ so that $|\alpha|=\text{dist}(0, \partial\Omega)$. Integrating $|P'|$ along a straight line in $\Omega$ joining $0$ to $\alpha \in \partial\Omega$ yields $1=|P(\alpha)|=|P(\alpha)-P(0)|\le \delta\frac{ A_d}{R}$ where $\delta := \text{dist}(0, \partial\Omega)$. Since $|a|\ge \delta$, we have 
$$U(P) \leq 1/|a| \leq 1/\delta \le A_d/R.$$

Finally, we prove that for any $R>1$,
\begin{equation}\label{DS}
U(P)\le 4-\frac{e^2}{4R}
\end{equation}

 Notice that when $R > \frac{e^2 \pi^2}{64} $, (\ref{DS}) follows easily from a result of Duren and Schiffer \cite{DurenSchiffer1991} which says that if $f:\mathbb{D}\to f(\mathbb{D})$ is a univalent function $f(z)=z+a_2z^2+\cdots$ with $$\mathrm{cap}(\overline{f(\mathbb{D})}) > \frac{e^2 \pi^2}{64}$$ then $$|a_2| \leq 2 - \frac{e^2}{4\mathrm{cap}(\overline{f(\mathbb{D})})}$$ and the inequality is sharp. Indeed when $R=\mathrm{cap}(\overline{\Omega}) > \frac{e^2 \pi^2}{64}$, then 
it follows from the proof of the Koebe $\frac{1}{4}$-Theorem that $$S(P,b) \leq 2+ |a_2|\leq 4 - \frac{e^2}{4R}$$ for any critical point $b \in \partial D(P,t(P))$.

Here we will prove (\ref{DS}) for all $R>1$ using Cunningham's Theorem (Theorem \ref{Cunningham}).\\

  We first get a hyperbolic parametrization for the functions $r$ and $R$ in (\ref{rA}) and (\ref{RA}). For $A>0$, define $\rho := \arccoth(\sqrt{A}) = (1/2) \ln((\sqrt{A} + 1)/(\sqrt{A} - 1)) > 0$, so that
$\sqrt{A} = \coth \rho$, $1/\sqrt{A} = \tanh \rho$, and $(\sqrt{A} + 1)/(\sqrt{A} - 1) = e^{2\rho}$. Then $(A - 1)/A = \sech^2 \rho$ and $(A - 1)^2/A = \csch^2 \rho \cdot \sech^2 \rho$. We then rewrite (\ref{rA}) and (\ref{RA}) as
\begin{equation}
r(\rho) = \frac{1}{4} \sech^2 \rho \cdot e^{2\rho \tanh \rho}
\end{equation}
and
\begin{equation}
R(\rho) = \frac{1}{16} \sech^2 \rho \cdot \csch^2 \rho \cdot e^{2\rho (\tanh \rho + \coth \rho)}
\end{equation}

  For $\rho>0$, define
$X(\rho):= \sech^2\rho \cdot e^{2\rho \tanh \rho}$, $Y(\rho):= \csch^2\rho \cdot e^{2\rho \coth \rho}. $Then one have $r(\rho) = X(\rho)/4$, $R(\rho) = X(\rho) Y(\rho)/16$ and $Y = 4R/r$. Let $Z(\rho):= Y(\rho) [X(\rho) - 1] = YX - Y = 16R - Y$. We first show that $Z$ is increasing and $e^2 \le Z(\rho) \leq 12$ for all $\rho>0$.\\

Notice that the logarithmic derivatives of $X$ and $Y$ are $2\rho \sech^2 \rho$ and $-2\rho \csch^2 \rho$ respectively. Differentiate $Z = Y(X - 1)$ and we then have $Z' = Y'(X - 1) + YX'= Y(-2\rho \csch^2 \rho (X - 1) + 2\rho \sech^2 \rho X)$. Hence $Z' \geq 0$ is equivalent to
$$X \sech^2 \rho \geq (X - 1) \csch^2 \rho \Leftrightarrow \cosh^2 \rho \geq X.$$

Since $X = e^{2\rho \tanh \rho} \sech^2 \rho$, the inequality $\cosh^2 \rho \geq X$ is the same as $e^{2\rho \tanh \rho} \leq \cosh^4 \rho$.
Set $g(\rho) := 2\rho \tanh \rho - 4 \ln \cosh \rho$. Then $g'(\rho) = 2(\rho \sech^2 \rho - \tanh \rho) \leq 0$ because $\tanh \rho = \int_0^\rho \sech^2 t \, dt \geq \rho \cdot \sech^2 \rho$ (for $\sech^2$ is decreasing on $(0,\infty)$). With $g(0) = 0$ we get $g(\rho) \leq 0$ for all $\rho > 0$. So we conclude that $e^{2\rho \tanh \rho} \leq \cosh^4 \rho$ and $Z' \geq 0$. Finally, as $\rho \to 0+$: $\tanh \rho = \rho + O(\rho^3)$, $\coth \rho = 1/\rho + O(\rho)$, so
  $X = 1 + \rho^2 + O(\rho^4)$, $Y = e^2 \rho^{-2} + O(1)$, hence
  $Z = Y(X - 1) = e^2 + O(\rho^2) \to e^2$. On the other hand, as $\rho \to \infty$: $X \to 4$ and $Y \to 4$, hence $Z \to 4\cdot(4 - 1) = 12$.\\

We now show that $1/r \leq 4 - e^2/(4R)$. From $Z = 16R - Y$ and $Z \geq e^2$ we obtain $Y \leq 16R - e^2$. Since $Y=4R/r$, $4R/r \leq 16R - e^2$
  implies $1/r \leq 4 - e^2/(4R)$ for every $\rho > 0$, hence for every $R > 1$. Notice that Cunningham’s theorem gives a unique $A > 1$, hence a unique $\rho > 0$, for each $R > 1$. To see this, we notice that
$$\frac{d}{d\rho}  \ln R = \frac{d}{d \rho}(\ln X + \ln Y - \ln 16)= 2\rho(\sech^2 \rho - \csch^2 \rho) < 0$$ Thus $R(\rho)$ decreases strictly from $\infty$ (as $\rho \to 0+$) to 1 (as $\rho \to \infty$), so the hyperbolic parametrization covers exactly $R > 1$. \\

Now, for $1<R\le 1.8$, $U(P) \le 1/r \leq 4 - e^2/(4R)\le 4 - e^2/(4\times 1.8)=2.9737422...\le 3$.

Also, for $R\ge A_d/3$, $U(P)\le A_d/R \le A_d/(A_d/3)=3$ and we complete the proof.\\

\section{Extremal polynomials for $U(P)$}\label{sec3}

It is known by work of Crane \cite{Crane2006} (see also Ng \cite{Ng}), that there exist extremal polynomials for Smale's mean value conjecture that satisfy a certain special property, that is, if $\mathcal{P}_{d}$ denotes the class of polynomials $P$ of degrees between $2$ and $d \geq 2$ inclusive, satisfying $P(0) = 0$, $P'(0) \neq 0$, then:
$$\sup_{P \in \mathcal{P}_{d}} S(P) = S(P_{\infty})$$
for some polynomial $P_{\infty} \in \mathcal{P}_{d}$ of degree $d$ which further satisfies the following special property:

    For any critical points $b,b' \in Z(P_{\infty}')$, one has $$S(P_{\infty},b) = S(P_{\infty},b')$$

Such extremal polynomials are called ``Standard extremal polynomial'' (by Crane in \cite{Crane2006}). Concerning the quantity $\sup_{P \in \mathcal{P}_{d}} U(P)$, so far there is no available information about the nature of extremal polynomials.  However, $\sup_{P \in \mathcal{P}_{d}} U(P) \ge \frac{d-2}{2d}(\sqrt{2}+1)$ for even $d \geq 14$ as this is evidenced by polynomials of the form $P_m = \frac{m}{m-1}(z - \frac{1}{m}z^m) \circ (4z^2 - 4z)$ for all $m \ge 2$. This fact was first observed by Dubinin in \cite{dubinin2012some} who considered the case when $m=7$. Note that for $m\ge 2$, all $P_m$ have critical values of equal modulus $1 = t(P_m)$ when $m$ is odd and when $m$ is even, all but one of the critical values are on the unit circle (the exceptional critical value has modulus $\frac{m+1}{m-1}$). 

Notice that $\frac{d-2}{2d}(\sqrt{2}+1)>\frac{d-1}{d}$ for $d\ge 4$. It follows then that, if Smale's mean value conjecture is true, Crane's ``standard extremal polynomials'' cannot also be extremal polynomials for $U(P)$. Therefore the following theorem appears to be meaningful.

\begin{theorem} \label{thm:9}
There exists an extremal polynomial $Q_{\infty}$ for $\sup_{P \in \mathcal{P}_{d}} U(P)$, which satisfies the following property:
$$|Q_{\infty}(b)| = |Q_{\infty}(b')| $$ for any two critical points $b,b' \in Z(Q_{\infty}') \setminus Z(Q_{\infty})$. That is, $Q_{\infty}$'s non-zero critical values lie on the same circle center zero.

\end{theorem}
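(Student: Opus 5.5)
We split the argument into an existence step and a deformation step that upgrades an arbitrary extremal polynomial to one with all non-zero critical values on one circle.

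\emph{Existence.} Work with Schlicht normalized representatives: $P(0)=0$, $P'(0)=1=t(P)$. Then $U(P)=\max\{1/|b| : b\in Z(P')\cap\partial D(P,1)\}$, and $U(P)\le 4$ by the Koebe $\frac14$-theorem. By Proposition \ref{Polya} and Theorem \ref{thm:5}, the leading coefficient of such $P$ satisfies $|a_d|\le 1$. More importantly, $D(P,1)$ contains the Koebe disk $\mathbb{D}_{1/4}$ and is a univalent image of $\mathbb{D}$.

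The critical points $b_j$ satisfy $|b_j|\ge 1/4$. This holds for those on or outside $\overline{D(P,1)}$; critical points cannot lie inside $D(P,1)$, where $P$ is univalent. Coefficients are symmetric functions of the $b_j$ through $P'(z)=\prod_j(1-z/b_j)$. Allowing some $b_j\to\infty$ lowers the degree, which is why the class $\mathcal{P}_d$ includes degrees between $2$ and $d$.

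Take a maximizing sequence $P_n$ and extract a subsequence whose critical points converge in $\overline{\mathbb{C}}$. The limit polynomial $Q$ has degree between $1$ and $d$. We must rule out degree $1$: then $P_n\to z$ locally uniformly, so $D(P_n,1)$ would eventually omit no point of the disk of radius $2$, forcing $U(P_n)$ small. That contradicts the lower bound $U\ge 1$, which is attained e.g. by $z^2+z$ after normalization.

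Next we check upper semicontinuity of $U$ under this convergence. The relevant facts are:
\begin{itemize}
\item[(a)] $t(Q)\ge \limsup t(P_n)$, by Hurwitz applied to inverse branches.
\item[(b)] Boundary critical points of $P_n$ converge to critical points $b$ of $Q$ with $|Q(b)|=1$.
\item[(c)] Any such $b$ lies on $\partial D(Q,1)$ as a limit of the boundaries, by Carath\'eodory kernel convergence of the univalent inverses $f_n\to f$.
\end{itemize}
Hence $U(Q)\ge\limsup U(P_n)$, and $Q$ is extremal.

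\emph{Equal moduli.} Among all extremal polynomials, normalized with $t=1$, choose one, $Q_\infty$, that minimizes the number of distinct moduli of non-zero critical values. The family of extremal polynomials is compact by the same argument. Suppose some non-zero critical value $w=Q_\infty(b')$ has $|w|\ne 1$; necessarily $|w|>1$. We follow Eremenko--Hayman's quasiconformal deformation.

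Let $\phi_\epsilon$ be a quasiconformal self-map of $\overline{\mathbb{C}}$ that is the identity on $\overline{\mathbb{D}}$ and near $\infty$. It moves the critical values of modulus $>1$ radially, e.g. $\phi_\epsilon(w)=w|w|^{-\epsilon}$ in an annulus, interpolated to be the identity near $\overline{\mathbb{D}}$. By the measurable Riemann mapping theorem, $\phi_\epsilon\circ Q_\infty=Q_\epsilon\circ h_\epsilon$, where $h_\epsilon$ is quasiconformal and $Q_\epsilon$ is a polynomial of the same degree. Normalize $h_\epsilon(0)=0$; since $\phi_\epsilon$ is the identity on $\mathbb{D}$, $h_\epsilon$ is conformal on $D(Q_\infty,1)$.

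Because the deformation only touches values outside $\overline{\mathbb{D}}$, the maximal univalent lemniscate structure persists. We get $t(Q_\epsilon)=1$, and $h_\epsilon$ maps $D(Q_\infty,1)$ conformally onto $D(Q_\epsilon,1)$. Thus $Q_\epsilon^{-1}=h_\epsilon\circ Q_\infty^{-1}$ on $\mathbb{D}$.

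\emph{The main obstacle.} We must control $U(Q_\epsilon)=\max 1/|h_\epsilon(b)|\cdot|Q_\epsilon'(0)|$ over the boundary critical points $b$, after renormalizing $Q_\epsilon'(0)=1$. The plan is to choose the direction of the deformation so that $U$ does not decrease: push the outer critical values either inward or outward.

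The one-parameter family $\epsilon\mapsto Q_\epsilon$ is real-analytic, so $\log U(Q_\epsilon)$ has a first variation that is linear in the deformation. If the derivative is nonzero, one of the two directions strictly increases $U$, contradicting extremality. If it is zero, we use the second-order term. Failing that, we push monotonically (as in Eremenko--Hayman) all the way until $|w|$ reaches $1$, staying in the extremal set by a continuity/closedness argument. This keeps $U$ constant while decreasing the number of distinct moduli, contradicting the minimality of $Q_\infty$.

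Care is needed at two points. First, when a pushed critical value reaches the unit circle, the new boundary critical points only enlarge the set defining $U$; this can only help, since $U$ is a maximum. Second, critical values must not collide in a way that drops the degree; such a collision is acceptable because $\mathcal{P}_d$ allows lower degrees.
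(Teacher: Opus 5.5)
The compactness part of your existence step is fine: taking limits of the critical points in $\overline{\mathbb{C}}$ is a valid substitute for the Montel argument of Remark~\ref{compactnessOfSchlichtNormalizedPolynomials}. The equal-moduli step, however, has a genuine gap: nothing you say shows that the deformed polynomials stay extremal.

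Your parameter $\epsilon$ is real. For a real-analytic one-parameter family, an interior maximum of $\epsilon\mapsto U(Q_\epsilon)$ only forces a vanishing first derivative and a nonpositive second one. $U$ can perfectly well decrease strictly in both directions, and then you cannot ``push monotonically while staying in the extremal set.'' Closedness of the extremal set gives nothing without openness, and openness is exactly what is missing.

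The key idea of the Eremenko--Hayman method, which the paper uses, is to make the deformation depend \emph{holomorphically on a complex parameter}. Take $\psi_\lambda(z)=z+\lambda\Phi(z)$ with $\Phi$ supported in a small disk about the critical value $a$. The measurable Riemann mapping theorem with parameters then makes $\lambda\mapsto\phi_\lambda(b)$ holomorphic for each fixed $b$. After dividing by $\phi_\lambda'(0)$ it is still holomorphic, and it is zero-free for $b\neq0$. Hence $\lambda\mapsto U(P_\lambda)=\max_{b\in Z(P')\cap\partial D(P,1)}1/|\phi_\lambda(b)|$ is a finite maximum of subharmonic functions with an interior maximum at $\lambda=0$, so it is \emph{constant} near $0$. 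That local constancy is what lets you move $a$ in any direction while staying extremal, and then reach the circle by a limiting argument. Your first/second-variation discussion cannot replace it.

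Three further steps need repair.

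\emph{Critical values inside the disk.} Your claim that a non-zero critical value off $\partial\mathbb{D}$ must satisfy $|w|>1$ is false. Critical points can lie in other components of $P^{-1}(\mathbb{D})$. For example, $z\bigl((z-1)^2-\delta\bigr)$ with small $\delta>0$ has $t\approx 4/27$ but a critical value of modulus about $\delta$. A deformation that is the identity on $\overline{\mathbb{D}}$ cannot move such values. The paper's $\psi_\lambda$ is supported in a disk about $a$ that avoids $0$, $\partial\mathbb{D}$ and the other critical values, so it treats $|a|<1$ and $|a|>1$ alike. It only needs $\psi_\lambda$ to be the identity near $0$ and near $\partial\mathbb{D}$ to get $\partial D(P_\lambda,1)=\phi_\lambda(\partial D(P,1))$; conformality of $\phi_\lambda$ on $D(P,1)$ is never used.

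\emph{Step (c).} Carath\'eodory kernel convergence alone does not put limits of boundary critical points of $D(P_n,1)$ on $\partial D(Q,1)$. Boundary points may converge outside the closure of the kernel, as in the pinching example of Section~\ref{sec:4}. The paper gets this from Lemma~\ref{lem:10}, i.e.\ uniform convergence of the inverse branches on $\overline{\mathbb{D}}$. A local connectedness argument near the limit critical point would also work.

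\emph{Degree one and the lower bound.} $U(z^2+z)=\tfrac12$, not $\ge1$, but any positive lower bound such as $U(z^d-z)=1-\tfrac1d$ suffices. The cleaner way to exclude degree one is that the maximizing boundary critical points satisfy $|b_n|=1/U(P_n)$, which stays bounded, so they cannot escape to $\infty$.
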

\begin{remark}
The result of this section is logically independent of the main results of all other sections. The proof technique is, however, reused (and made reference to) in the proofs of Theorem \ref{thm:3} and Corollary \ref{cor:2} in Section \ref{sec:4}.
\end{remark}
\medskip  
\subsection{Proof of Theorem \ref{thm:9}}
\begin{proof}[Proof of Theorem \ref{thm:9}]
We start by showing that extremal polynomials for $\sup_{P \in \mathcal{P}_d} U(P)$ exist. Let $$P_n ; n=1,2, \ldots $$ be an extremizing sequence of polynomials in $\mathcal{P}_{d}$, i.e, a sequence of polynomials in $\mathcal{P}_d$ such that $$U(P_n) \rightarrow \sup_{P \in \mathcal{P}_{d} } U(P) $$ which we renormalize so that $P_n'(0) = 1$, and $t(P_n) = 1$ for all $n$. Let $$b_n \in Z(P_n') \cap \partial D(P_n,1) ; n=1,2, \ldots$$ be such that $$U(P_n) = \frac{1}{|b_n|} ; n=1,2, \ldots $$ Then $$|b_n| ; n=1,2, \ldots$$ is bounded (above and below uniformly in $n$), since the Koebe-$\frac{1}{4}$-theorem implies that it is bounded below by $\frac{1}{4}$, and there exists no subsequence $n_{k} ; k = 1,2 \ldots $ of $n ; n=1,2, \ldots$ for which $$|b_{n_{k}}| \rightarrow \infty \quad \rm{as} \qquad k \rightarrow \infty$$ or else, if $n_{k} ; k=1,2,...$ was such a subsequence, then $$\sup_{P \in \mathcal{P}_{d} } U(P) = \lim_{k \rightarrow \infty} U(P_{n_{k}}) = \lim_{k \rightarrow \infty} \frac{1}{|b_{n_{k}}|} = 0$$ which is a contradiction since $$U(z^d - z) = 1 - \frac{1}{d} \geq \frac{1}{2} ; d \geq 2$$

By the compactness of Schlicht normalized polynomials of degrees $\leq d$ (see Remark \ref{compactnessOfSchlichtNormalizedPolynomials}) we may without loss of generality, after passing to an appropriate subsequence whose relabeling we suppress, assume that $P_n \rightarrow Q$ holds locally uniformly in $\mathbb{C}$, where by Lemma \ref{lem:10} of Section \ref{sec:4} we have that $Q \in \mathcal{P}_{d}$ is also Schlicht normalized. Then (after passing to a further subsequence, if necessary, whose relabeling we suppress), $$S(P_n,b_n) \rightarrow S(Q,b)$$ for some critical point $b = \lim_{n} b_n$ of $Q$, which is necessarily on $$\partial D(Q,t(Q)) \equiv \partial D(Q,1)$$ again by Lemma \ref{lem:10} of Section \ref{sec:4}.

Since $b \in \partial D(Q,1) \cap Z(Q')$, we have by definition of $U(Q)$ as $$\max_{b \in \partial D(Q,1) \cap Z(Q')} S(Q,b)$$ that \begin{equation} \label{firstineqforU} \lim_n S(P_n,b_n ) = S(Q,b) \leq U(Q) \end{equation} On the other hand \begin{equation} \label{byDefOfU} \lim_{n} U(P_n) = \lim_{n} S(P_n,b_n)= \sup_{P \in \mathcal{P}_{d}} U(P)  \end{equation} by construction, so combining (\ref{firstineqforU}) and (\ref{byDefOfU}) we conclude that $$U(Q) \leq \sup_{P \in \mathcal{P}_{d}} U(P) = \lim_{n} S(P_n,b_n) = S(Q,b) \leq U(Q) $$

It follows that $Q$ is a (Schlicht normalized) extremal polynomial for $\sup_{P \in \mathcal{P}_d} U(P)$ over $\mathcal{P}_{d}$.\\

To see that there exists an extremal polynomial for $U(.)$ over $\mathcal{P}_{d}$ having all critical values of the same modulus, we apply the quasiconformal deformation argument of Eremenko and Hayman, essentially verbatim (see \cite{eremenko1999length} for details).

Let $P$ be a Schlicht normalized extremal polynomial for $U(.)$ over $\mathcal{P}_{d}$ and suppose that $a \neq 0$ is a critical value of $P$ satisfying $a \notin \partial \mathbb{D}$. Define $\psi_{\lambda}(z) = z + \lambda \Phi(z)$, where $\Phi$ is a smooth compactly supported  function satisfying $$\Phi(a) = 1$$ and $$\mathrm{\supp{(\Phi)}} \subset \Delta(a ; \epsilon)$$ where $$\Delta(a ; \epsilon)$$ is a small open disk centered at $a$ which lies a fixed positive distance away from all other critical values of $P$, the origin and $\partial \mathbb{D}$, i.e. $$\mathrm{dist}(\Delta(a ; \epsilon) , \{ 0 \} \cup [\{P(b) : b \in Z(P') \} \setminus \{a \}] \cup \partial \mathbb{D}) \eqcolon \delta > 0$$ We further suppose that $|\lambda|$ is sufficiently  small (say $\lambda \in N$, where $0 \in N$ is a fixed sufficiently small open disk centered at the origin) so that $$\psi_{\lambda}$$ is a quasiconformal homeomorphism of the complex plane.
Then define the deformed polynomial $$P_{\lambda} \coloneq \psi_{\lambda} \circ P \circ \phi_{\lambda}^{-1}$$ where $\phi_{\lambda}$ is the quasiconformal mapping of the complex plane fixing $0$ and $\infty$, having local form $\phi_{\lambda}(z) = z + o(1)$ near $\infty$ for each fixed $\lambda$, and with Beltrami coefficient $$\mu_{\lambda} = (\mu_{\psi_{\lambda}} \circ P) \ \frac{\overline{P'}}{P'}$$ It follows  that the resulting map $P_{\lambda}$ is conformal (and thus necessarily a polynomial of the same degree as $P$). Such a quasiconformal mapping exists by the measurable Riemann mapping theorem, which also implies that \begin{itemize}
    \item[i)]  For any fixed $z \in \mathbb{C}$, $$\lambda \rightarrow \phi_{\lambda}(z)$$ is holomorphic 
    \item[ii)]  $$(\lambda,z) \rightarrow \phi_{\lambda}(z) : N \times (\mathbb{C} \setminus P^{-1}(\mathrm{\supp}(\Phi))) \rightarrow \mathbb{C}$$ is (jointly) holomorphic in both variables $\lambda$ and $z$.
\end{itemize} For a proof of this fact, see e.g. Theorem 5.7.4 on page 188 of \cite{AstalaIwaniecMartin}. It follows that for any fixed $z \in \mathbb{C} \setminus P^{-1}(\mathrm{\supp}(\Phi))$, the mapping $$\lambda \rightarrow \phi_{\lambda}'(z) = \frac{\partial}{\partial z} \phi_{\lambda}(z) : N \rightarrow \mathbb{C}$$ is holomorphic. Hence the mapping $$\lambda \rightarrow \phi_{\lambda}'(0) : N \rightarrow \mathbb{C}$$ is holomorphic. Therefore after applying the rescaling $$\phi_{\lambda} \rightarrow \frac{\phi_{\lambda}(z)}{\phi_{\lambda}'(0)}$$ the mapping $$\lambda \rightarrow \phi_{\lambda}(z) : N \rightarrow \mathbb{C}$$ is still holomorphic for any fixed $z \in \mathbb{C}$ and the above two holomorphicity properties (i) and (ii) continue to hold. We call the rescaled quasiconformal homeomorphism again $\phi_{\lambda}$, and the corresponding deformed polynomial again $P_{\lambda}$. Note that by construction $$P_{\lambda}'(0) = 1 ; \lambda \in N$$ and $$P_{\lambda}(0) = 0 ; \lambda \in N$$

Since $\partial D(P,1)$ is a Jordan curve in the complex plane (this follows from Lemma \ref{lem:10}), $\phi_{\lambda}(\partial D(P,1))$ is a Jordan curve in the complex plane. $\phi_{\lambda}(\partial D(P,1))$ winds once positively around the origin because $\phi_{\lambda}$ fixes the origin and infinity, and is an orientation preserving homeomorphism of the complex plane. Moreover, $\phi_{\lambda}(\partial D(P,1))$ contains no zeros of $P_{\lambda}$ in its interior besides $\phi_{\lambda}(0) = 0$ because the zeros of $P_{\lambda}$ are precisely of the form $$\phi_{\lambda}(z) ; z \in Z(P)$$ Letting $$z(s) \coloneq P^{-1}(\exp(is))$$ where by $P^{-1}$ we mean the continuous extension (to $\overline{\mathbb{D}}$) of the inverse branch of $P^{-1}$ fixing the origin, we have $$P_{\lambda}(\phi_{\lambda}(z(s))) = \psi_{\lambda}(P(P^{-1}(\exp(is)))) = \exp(is)$$ because $\psi_{\lambda}$ is the identity mapping in a neighbourhood of the unit circle. Since $\phi_{\lambda}(D(P,1))$ is a Jordan domain containing exactly one zero (counting multiplicity) of $P_{\lambda}$, which is at the origin $$\restr{P_{\lambda} \ }{\phi_{\lambda}(D(P,1))}$$ is a univalent map of $\phi_{\lambda}(D(P,1))$ onto $\mathbb{D}$, so that $$D(P_{\lambda},1) = \phi_{\lambda}(D(P,1))$$ and $$t(P_{\lambda}) \geq 1$$ moreover $$\partial D(P_{\lambda},1) = \phi_{\lambda}(\partial D(P,1)) ; \lambda \in N$$ Since $$\partial D(P_{\lambda},1) = \phi_{\lambda}(\partial D(P,1))$$ contains at least one critical point of $P_{\lambda}$, because the critical points of $P_{\lambda}$ are precisely given by $$\phi_{\lambda}(b) ; b\in Z(P')$$ and there is at least one critical point of $P$ on $\partial D(P,1)$ as $P$ is Schlicht normalized, we conclude that there is at least one critical point of $P_{\lambda}$ on $$\partial D(P_{\lambda},1)$$ so that $$t(P_{\lambda}) \leq 1$$ hence $$t(P_{\lambda}) = 1$$ Thus, the deformed polynomials $$P_{\lambda} :\lambda \in N$$ are Schlicht normalized and $$ \partial D(P_{\lambda},t(P_{\lambda})) = \partial D(P_{\lambda},1) = \phi_{\lambda}(\partial D(P,1)) ; \lambda \in N$$ Consequently, the function $$\lambda \rightarrow U(P_{\lambda}) = \max_{ b_{\lambda} \in \partial D(P_{\lambda},1) \cap Z(P_{\lambda}')} \frac{1}{|b_{\lambda}|} = \max_{ b \in \partial D(P,1) \cap Z(P')} \frac{1}{|\phi_{\lambda}(b)|}$$ is subharmonic in $\lambda \in N$ (as it is the maximum of finitely many subharmonic functions) and achieves a maximum at $\lambda = 0$. By the maximum principle for subharmonic functions, we conclude that $$\lambda \rightarrow U(P_{\lambda})$$ is necessarily constant.

By a standard limiting argument (as in \cite{eremenko1999length}) we can move each of the (non-zero) critical values of modulus $a \neq 1 = t(P)$ towards the unit circle over a sequence of extremal polynomials constructed by the above quasiconformal deformation of $P$. In the limit, we obtain an extremal polynomial with all (non-zero) critical values of equal modulus $t(P) = 1$.

\end{proof}
\section{An improved Markov type inequality} \label{sec:4}
Let $\mathcal{P}_{d}$ denote the class of polynomials $P$ of degrees between $2$ and $d \geq 2$ inclusive, satisfying $P(0) = 0$, $P'(0) \neq 0$. The following result gives an improvement on the Markov inequality of Eremenko for the regions $D(P,t(P))$.

\improvedMarkov*

We first establish some preliminary results which will be used in the proof of Theorem \ref{thm:3}. In particular, we will establish a continuity result (Corollary \ref{cor:4}) which says that $$\mathrm{cap}(\overline{D(P_n,1)}) \rightarrow \mathrm{cap}(\overline{D(Q,1)})$$ as $n \rightarrow \infty$, given the polynomials $P_n$ are non-linear and Schlicht normalized, and converge locally uniformly to $Q$.

The same property is \textbf{not true} for an arbitrary sequence $f_n$ of Schlicht univalent functions converging locally uniformly to a Schlicht univalent function $f$, in the sense that $$\mathrm{cap}(\overline{f(\mathbb{D})}) < \liminf_n \mathrm{cap}(\overline{f_n(\mathbb{D})}) $$ is possible. Indeed, one can perform a so-called ``pinching'' construction to show this (see page 3 of \cite{DurenSchiffer1991}). The idea is most easily conveyed through a figure eight. One can trace a nested sequence of Jordan curves $\gamma_n$ lying exterior to a ``figure eight'' or ``bow-tie'' shaped curve, which successively approach the figure eight curve from the \textbf{outside}, and form a narrower and narrower ``neck'' region close to the double tangency point of the figure eight. The kernel of the images of the (normalized, with positive derivative at the origin) conformal mappings $f_n$ onto the inner domains of each $\gamma_n$ with respect to a fixed ``center point'' $c = f_n(0) , n=1,2,... $ of one of the lobes comprising the figure eight, for definiteness say that $c$ belongs to the right lobe,  will exclude the left lobe comprising the figure eight, hence the capacity of the image of the resulting limit mapping $f$ will be strictly smaller than the limit inferior of the capacities of the images of the maps in the sequence. Such a sequence of conformal mappings can be rescaled via $f_n \rightarrow \frac{f_n}{|f_n'(0)|}$ so that they each have derivative of unit magnitude at the origin. The qualitative behaviour of the images of the rescaled mappings will not change since $|f_n'(0)|$ is bounded below by the inner radius of the right lobe with respect to $c$ and is bounded above by the inner radius of the image of $f_1$ with respect to $c$. In particular, $$\liminf_n \mathrm{cap}{\overline{f_n(\mathbb{D})}} > \mathrm{cap}{\overline{f(\mathbb{D})}}$$ will continue to hold.

This highlights the necessity of establishing the following preliminary results.

\begin{lemma} \label{lem:10}
    Let $P_n \in \mathcal{P}_{d} , \ n=1,2,3,...$ be a sequence of Schlicht normalized polynomials, converging locally uniformly to the polynomial $Q$. Then $Q \in \mathcal{P}_{d}$ and is Schlicht normalized. Furthermore, letting $P_n^{-1}$ and $Q^{-1}$ denote the inverse branches fixing zero defined on the closed unit disk $\overline{\mathbb{D}}$ we have $$\restr{P_n^{-1}}{\overline{\mathbb{D}}} \rightarrow \restr{Q^{-1}}{\overline{\mathbb{D}}}$$ uniformly on $\overline{\mathbb{D}}$ as $n \rightarrow \infty$.

\end{lemma}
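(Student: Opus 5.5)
We need to prove that $Q$ is Schlicht normalized with degree between $2$ and $d$, and that the inverse branches converge uniformly on the closed disk $\overline{\mathbb{D}}$. The plan has three steps, taken in this order.

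\textbf{Step 1: $Q$ lies in $\mathcal{P}_d$.} Since $P_n\to Q$ locally uniformly and all $P_n$ have degree at most $d$, the coefficients converge, because they are Cauchy integrals over a fixed circle. Hence $Q$ is a polynomial of degree at most $d$, with $Q(0)=0$ and $Q'(0)=1$. To see that $Q$ is not linear, use Proposition \ref{Polya} and Proposition \ref{prop4}: for each $n$ we have
\[
1\le \mathrm{cap}(\overline{D(P_n,1)})\le \mathrm{cap}(P_n^{-1}(\overline{\mathbb{D}}))=|a_{d_n}^{(n)}|^{-1/d_n}.
\]
This alone does not exclude $Q(z)=z$, so I would argue more directly. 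Each $P_n$ has a critical point $b_n\in\partial D(P_n,1)$ with $|P_n(b_n)|=1$. By the Koebe $\frac14$-theorem applied to $f_n=P_n^{-1}|_{\mathbb{D}}$, which lies in the compact class $\mathcal{S}$, we get $|b_n|\ge 1/4$. Moreover $|b_n|$ is bounded above, since $b_n=f_n(\zeta_n)$ with $|\zeta_n|=1$ and $P_n(b_n)=\zeta_n$; equivalently, the zero set of $P_n'$ (a polynomial of degree at most $d-1$) converges to that of $Q'$.

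Pass to a subsequence along which $b_n\to b$ (or $|b_n|\to\infty$) and $f_n\to f\in\mathcal{S}$ locally uniformly. Then $Q\circ f=\mathrm{id}$ on $\mathbb{D}$, and $f(\mathbb{D})$ is a component of $Q^{-1}(\mathbb{D})$ on which $Q$ is univalent. If $Q$ were linear, $Q(z)=z$, then for large $n$ the polynomial $P_n'$ would have all its zeros tending to $\infty$. In that case $P_n$ would be univalent on larger and larger disks, giving $t(P_n)>1$ eventually, a contradiction. Hence the $b_n$ stay bounded, any limit $b$ is a critical point of $Q$ with $|Q(b)|=1$, and $Q$ is nonlinear.

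\textbf{Step 2: $t(Q)=1$.} Since $f$ maps $\mathbb{D}$ univalently onto a component of $Q^{-1}(\mathbb{D})$ containing $0$, we get $D(Q,1)=f(\mathbb{D})$ and $t(Q)\ge 1$. The limiting critical point $b$ lies in $\overline{f(\mathbb{D})}$, as a limit of $f_n(\zeta_n)$ with $f_n$ uniformly bounded on $\overline{\mathbb{D}}$ (shown in Step 3), and $|Q(b)|=1$. So $b\in\partial D(Q,1)$, which forces $t(Q)\le 1$.

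\textbf{Step 3 (main obstacle): uniform convergence on $\overline{\mathbb{D}}$.} Local uniform convergence on $\mathbb{D}$ is automatic; the issue is at the boundary, where the pinching phenomenon described before the lemma must be ruled out. I would use equicontinuity of $\{f_n\}$ on $\overline{\mathbb{D}}$ together with Arzel\`a--Ascoli. Any uniform limit must then equal $f=Q^{-1}$ on $\mathbb{D}$, hence on $\overline{\mathbb{D}}$ by continuity, so the whole sequence converges.

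For equicontinuity, the key input is that the $f_n$ invert polynomials of bounded degree with convergent coefficients. Near a boundary point $\zeta_0$, the map $f_n$ is a branch of the solution of $P_n(w)=\zeta$. The local structure is uniform in $n$: near a point $w_0$ where $Q'$ has a zero of order $k-1$, Rouch\'e's theorem shows that $P_n(w)=\zeta$ has exactly $k$ roots in a small disk about $w_0$. This yields a H\"older-type modulus of continuity $|f_n(\zeta)-f_n(\zeta')|\le C|\zeta-\zeta'|^{1/k}$, with $C$ independent of $n$, coming from $|P_n(w)-P_n(w')|\ge c|w-w'|^k$ on the branch.

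Concretely, I would use the uniform bound $|P_n'(w)|\ge c\prod|w-b_j^{(n)}|$ with convergent critical points to show that preimages of nearby points under a single univalent branch are close. The delicate part is that the branch must be chosen consistently. To handle this I would argue by contradiction. Suppose $\zeta_n,\zeta_n'\to\zeta_0$ but $|f_n(\zeta_n)-f_n(\zeta_n')|\ge\epsilon$. Join $\zeta_n$ to $\zeta_n'$ by a short arc inside $\overline{\mathbb{D}}$. Its image under $f_n$ is then a connected set of diameter at least $\epsilon$, mapped by $P_n$ into a tiny set. In the limit this produces a continuum of diameter at least $\epsilon$ on which $Q$ is constant, which is impossible for a nonconstant polynomial.
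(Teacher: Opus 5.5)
There is a genuine gap. You never show that $\overline{D(P_n,1)}=f_n(\overline{\mathbb{D}})$ lies in a disk $\overline{\mathbb{D}_{R_0}}$ independent of $n$, yet every step relies on this.

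In Step 1 the upper bound on $|b_n|$ is circular. ``$b_n=f_n(\zeta_n)$ with $|\zeta_n|=1$'' bounds $b_n$ only if $f_n$ is already uniformly bounded on $\overline{\mathbb{D}}$. The condition $|P_n(b_n)|=1$ alone gives nothing, since for $\deg Q<\deg P_n$ the level set $\{|P_n|=1\}$ reaches beyond the zeros of $P_n$ that escape to $\infty$. Your appeal to convergence of the zero sets of $P_n'$ fails for the same reason: some critical points of $P_n$ then \emph{do} escape, and nothing prevents the boundary critical point $b_n$ from being one of them. Ruling out $Q(z)=z$ does not yield ``hence the $b_n$ stay bounded''. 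Even the linear case is incomplete: $P_n$ being univalent and critical-point-free on a large disk contradicts $t(P_n)=1$ only once you know $\overline{D(P_n,1)}$ lies inside that disk.

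Step 2 defers the bound to Step 3, but Step 3 does not produce it; it consumes it. To pass from the continua $f_n(\sigma_n)$ ($\sigma_n$ your short arcs) to a limit continuum on which $Q\equiv\zeta_0$, you need them in a fixed compact set on which $P_n\to Q$ uniformly. This is exactly where the paper uses $f_n(\overline{\mathbb{D}})\subset\overline{\mathbb{D}_{R_0}}$.

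The paper gets this bound from Eremenko's Markov inequality (\ref{EM}): because $|P_n'(0)|=1$, one has $\mathrm{diam}\,\overline{D(P_n,1)}\le 4\,\mathrm{cap}(\overline{D(P_n,1)})\le 2^{1+\frac{1}{d}}d^2$. In your setting there is an even cheaper fix. Since $Q$ is a nonconstant polynomial, choose $R$ with $|Q|>2$ on $|z|=R$. Then $|P_n|>1$ there for all large $n$, and the connected set $\overline{D(P_n,1)}\ni 0$, on which $|P_n|\le 1$, must lie in $\mathbb{D}_R$.

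With this inserted, your plan goes through. Your Step 3 is a qualitative version of the paper's own argument: the preimage under $Q$ of a small disk about $\zeta_0$ lies in disjoint small disks, and a connected image must stay in one of them. Running it over arcs in all of $\overline{\mathbb{D}}$ and finishing with Arzel\`a--Ascoli is slightly cleaner than the paper's combination of boundary equicontinuity, radial limits and the maximum principle.

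Finally, Step 2 must come after Step 3. Bounded $b_n$ alone does not place the limit $b$ in $\overline{f(\mathbb{D})}$; that is precisely what pinching would break. You need $f_n(\zeta_n)\to f(\zeta)$ from uniform convergence on $\overline{\mathbb{D}}$. After that, $Q'(b)=0$ and $|Q(b)|=1$ give both $t(Q)\le 1$ and the nonlinearity of $Q$ at once.
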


\begin{proof}[Proof of Lemma \ref{lem:10}]
 Clearly $Q(0) = 0$ and $Q'(0) = 1$ holds by the local-uniform convergence $P_{n} \rightarrow Q$. Let $$f_{n} ; n=1,2, \ldots$$ denote the inverse branch of $P_{n}^{-1}$ fixing zero defined on the open unit disk $\mathbb{D}$. By (\ref{EM}), we have \begin{equation} \label{markovBoundedness} |f_n(w)| = |f_n(w) - 0| \leq \mathrm{diam}(\overline{D(P_n,1)}) \leq 4 \mathrm{cap}(\overline{D(P_n,1)}) = 4 \mathrm{cap}(\overline{D(P_n,1)}) |P_n'(0)|  \end{equation} $$\leq 4 \mathrm{cap}(\overline{D(P_n,1)}) ||P_{n}'||_{\overline{D(P_n,1)}} \leq  2^{1+\frac{1}{d}} d^2 $$ for every $w \in \mathbb{D}$. It follows that $\{f_{n} \}$ is a normal family on $\mathbb{D}$. Let $h$ be any subsequential limit of $f_{n}$. Then $h$ is non-constant (since $h'(0) = 1$) and it follows by Hurwitz's theorem that $h : \mathbb{D} \rightarrow \mathbb{C}$ is Schlicht. Moreover for any $w \in \mathbb{D}$, by (\ref{markovBoundedness}), we have $$|h(w)| \leq  2^{1+\frac{1}{d}}d^2$$ Since $P_{n} \rightarrow Q$ holds locally uniformly in $\mathbb{C}$, we conclude that $$
Q(h(w))
=\lim_{k\to\infty}Q(f_{n_k}(w))
=\lim_{k\to\infty}P_{n_k}(f_{n_k}(w))
=\lim_{k\to\infty}w
=w.
$$
This implies in particular that $h$ is the (univalent) inverse branch of $Q$ fixing $0$, since this is defined on all of $\mathbb{D}$ we conclude that \begin{equation} \label{inequalityFort(Q)first} t(Q) \geq 1 \end{equation} and it follows that $f_{n} \rightarrow f$ locally uniformly on $\mathbb{D}$, where $f$ denotes the inverse branch of $Q^{-1}$ fixing zero defined on the open unit disk. Since $\partial D(P_n,1) ; n=1,2,...$ and $\partial D(Q,1)$ are (away from a finite set of critical points), real analytic curves and are thus locally-connected, the maps $f_{n}; n=1,2,....$ and $f$ extend continuously to $\overline{\mathbb{D}}$ by Carath\'eodory's theorem (Theorem 2.1 of page 20 of \cite{pommerenke1992boundary}). On the other hand we have $$Q(f(z)) = z ; z \in \overline{\mathbb{D}}$$ and $$P_n(f_n(z)) = z ; z \in \overline{\mathbb{D}} $$ by continuity (for all $n = 1,2, \ldots$), so that we conclude that $f$ and $f_n;n=1,2, \ldots$ are injective on $\overline{\mathbb{D}}$ and in particular $$\partial D(P_n,1) = f_n(\partial \mathbb{D})$$ and $$\partial D(Q,1) = f(\partial \mathbb{D})$$ are Jordan curves in the complex plane. That $f_n$ converges to $f$ uniformly on $\overline{\mathbb{D}}$ as $n \rightarrow \infty$ will follow from the following two statements.
\begin{enumerate}
\renewcommand{\labelenumi}{(\arabic{enumi})}
    \item \label{first} $$f_n\rightarrow f$$  holds pointwise everywhere on $\overline{\mathbb{D}}$ as $n \rightarrow \infty$.
    \item \label{second} For every $w \in \partial \mathbb{D}$ and $\epsilon > 0$ there exists a $\delta = \delta(\epsilon,w)$ such that for all $n$ sufficiently large, $$|\restr{f_n \ }{\partial \mathbb{D}}(w_1) - \restr{f_n \ }{\partial \mathbb{D}}(w_2)| < \epsilon$$ holds for every $w_1, w_2 \in \Delta(w ; \delta) \cap \partial \mathbb{D}$
    \end{enumerate} 

In fact, each $f_n-f$ is holomorphic on $\mathbb{D}$ and continuous on $\overline{\mathbb{D}}$, so by the maximum principle, we have $$\max_{\overline{\mathbb{D}}}|f_n-f|=\max_{\partial{\mathbb{D}}}|f_n-f|$$
Thus, to prove uniform convergence of $\{f_n\}$ to $f$ on $\overline{\mathbb{D}}$, it suffices to prove uniform convergence on $\partial \mathbb{D}$, which follows from statements (\ref{first}) and (\ref{second}) by the compactness of $\partial\mathbb{D}$.

We first prove statement (\ref{second}) and also the open disk part of statement (\ref{first}). By (\ref{markovBoundedness}), we have $$f_n(\overline{\mathbb{D}}) \subset \overline{\mathbb{D}}_{R_0} ; n = 1 ,2, \ldots $$ where $$R_0=4(2^{1/d - 1} d^2)$$ for all $n \ge 1$ and likewise, we also have $f(\overline{\mathbb{D}}) \subset \overline{\mathbb{D}}_{R_0}$. 
 Since $P_n \to Q$ locally uniformly, we have 
\begin{equation}\label{lu}
\sup_{\overline{\mathbb{D}}_{R_0}}|P_n-Q|\to 0 
\end{equation}
as $n\to \infty$. Now consider any $w \in \partial \mathbb{D}$ and let $Q^{-1}(w)=\{x_1,...,x_s\}$. Choose $\tilde{\epsilon} < \epsilon$ so small that the closed disks $\overline{\Delta(x_j;\tilde{\epsilon})} ; j=1,...,s$ are pairwise disjoint. Then for $\delta = \delta(\epsilon,w) > 0$ chosen sufficiently small, $Q^{-1}(\overline{\Delta(w ; 2\delta)})$ is a compact subset of $\bigcup_{j=1}^{s} \Delta(x_{j} ; \frac{\tilde{\epsilon}}{2})$. For this $\delta$, let $z\in \Delta(w ; \delta) \cap \partial \mathbb{D}$. By (\ref{lu}), for all $n$ sufficiently large, say $n \geq N$, where $N$ is independent on $z$ but can depend on $w$ and $\delta$ we have $$|Q(f_n(z))-z|=|Q(f_n(z))-P_n(f_n(z))|<\delta$$
and hence $Q(f_n(z))\in \Delta(w ; 2\delta)$ so that $f_n(z)\in Q^{-1}(\overline{\Delta(w ; 2\delta)}) \subset \bigcup_{j=1}^{s} \Delta(x_{j} ; \frac{\tilde{\epsilon}}{2})$. Then we necessarily have $$f_n( \Delta(w ; \delta) \cap \partial \mathbb{D}) \subset \Delta(x_{j_n} ; \frac{\tilde{\epsilon}}{2}) $$ for some $j_{n} \in \{1,...,s \}$ as $$f_n(\Delta(w ; \delta) \cap \partial \mathbb{D}) $$ is connected. It follows that for all $n$ sufficiently large, we have $$|f_n(w_1) - f_n(w_2)| \leq \tilde{\epsilon} < \epsilon$$ for any $w_1,w_2 \in \Delta(w ; \delta) \cap \partial \mathbb{D}$. 

We now continue to prove statement (\ref{first}). For a fixed $w\in \partial\mathbb{D}$, let $[0,w]=\{tw:t \in [0,1]\}$ and $[0,w)=\{tw:t \in [0,1)\}$. By the same proof, we have 
\begin{enumerate}
\renewcommand{\labelenumi}{(\arabic{enumi})}
\setcounter{enumi}{2}
    \item \label{third} For every $w \in \partial \mathbb{D}$ and  $\epsilon > 0$, there exists a $\delta = \delta(\epsilon,w)$ such that for all $n$ sufficiently large, $$|\restr{f_n \ }{[0,w]}(w_1) - \restr{f_n \ }{[0,w]}(w_2) | < \epsilon $$ holds for every $w_1 , w_2 \in \Delta(w ; \delta) \cap [0,w]$
\end{enumerate}

Given $\epsilon>0$ and $w \in \partial \mathbb{D}$, choose $\delta$ in statement (\ref{third}) and by the continuity of $f$, pick $w'\in \Delta(w;\delta)\cap [0,w)$ so close to $w$ that $|f(w)-f(w')|<\epsilon$. Hence for all $n$ sufficiently large, $$|f_n(w)-f(w)|<|f_n(w)-f_n(w')|+|f_n(w')-f(w')|+|f(w')-f(w)|<3\epsilon$$
This implies that $$f_n(w) \rightarrow f(w)$$ holds for every $w \in  \partial \mathbb{D}$ which completes the proof of statement (\ref{first}). It follows readily from (\ref{first}) and (\ref{second}) that $$f_{n} \rightarrow f$$ holds uniformly on $\overline{\mathbb{D}}$. It remains to show that $Q$ is Schlicht normalized. To this end, let $b_{n} \in \partial D(P_n,1) \subset \overline{\mathbb{D}_{R_0}} ; n=1,2,...$ be a sequence of critical points, we may take a convergent subsequence $b_{n_k} \rightarrow b \in \overline{\mathbb{D}_{R_0}}$, such that $$b_{n_{k}} = f_{n_k}(\zeta_{k}) ; k=1,2, \dots$$ where $\zeta_{k} \in \partial \mathbb{D} ; k=1,2, \ldots$. By passing to a further subsequence (whose relabeling we suppress), we may assuming that $\zeta_{k} \rightarrow \zeta \in \partial \mathbb{D}$, and thus $$f_{n_{k}}(\zeta_{k}) \rightarrow f(\zeta)$$ by the uniform convergence $$f_{n} \rightarrow f$$ on $\overline{\mathbb{D}}$. Thus $b \in \partial D(Q,1)$ and by the local uniform convergence $P_{n} \rightarrow Q$ we conclude that $$Q'(b) = \lim_{k} P_{n_k}'(b_{n_{k}})  =0$$ and hence \begin{equation} \label{InequalityFort(Q)Second} t(Q) \leq 1 \end{equation} Combining (\ref{inequalityFort(Q)first}) and (\ref{InequalityFort(Q)Second}) we conclude that $t(Q) = 1$ so that $Q$ is Schlicht normalized. This concludes the proof of Lemma \ref{lem:10}. 

\end{proof}
\begin{remark} \label{compactnessOfSchlichtNormalizedPolynomials}
Since any sequence of Schlicht normalized polynomials $P_n \in \mathcal{P}_{d}$ satisfies $$P_n(D(P_n,1)) = \mathbb{D} ; n=1,2, \ldots $$ and since $$\mathbb{D}_{\frac{1}{4}} \subset D(P_n,1) ; n=1,2, \ldots$$ holds by the Koebe $\frac{1}{4}$-theorem, it follows from Montel's theorem that $$\restr{P_n \ }{\mathbb{D}_{\frac{1}{4}}} ; n=1,2, \ldots$$ forms a normal family, and any subsequential limit $Q$ of the above sequence must be a polynomial of degree $\leq d$ (as its Taylor coefficients of orders $> d$ vanish by local uniform convergence) which must be Schlicht normalized by Lemma \ref{lem:10}. This in particular gives that, for any $d \geq 2$, the class of Schlicht normalized polynomials of degree $\leq d$ is compact.
\end{remark}
We will also require the below continuity result for the proof of Theorem \ref{thm:3}.

\begin{corollary} \label{cor:4}
Let $P_n \in \mathcal{P}_{d}, n=1,2,3,...$ be a sequence of Schlicht normalized polynomials converging locally uniformly to the Schlicht normalized polynomial $Q \in \mathcal{P}_{d}$.

Then $$\mathrm{cap}(\overline{D(P_n,1)}) \rightarrow \mathrm{cap}(\overline{D(Q,1)})$$
\end{corollary}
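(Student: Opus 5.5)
The plan is to use the uniform convergence $f_n \to f$ on $\overline{\mathbb{D}}$ from Lemma \ref{lem:10}, where $f_n = P_n^{-1}$ and $f = Q^{-1}$ are the inverse branches fixing the origin. The goal is to convert this into convergence of capacities by bracketing $\overline{D(P_n,1)}$ between two nearly equal sets. Write $K_n := \overline{D(P_n,1)} = f_n(\overline{\mathbb{D}})$ and $K := \overline{D(Q,1)} = f(\overline{\mathbb{D}})$. Both are closures of Jordan domains, because Lemma \ref{lem:10} shows that $\partial D(P_n,1) = f_n(\partial\mathbb{D})$ and $\partial D(Q,1) = f(\partial \mathbb{D})$ are Jordan curves. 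Put $\epsilon_n := \|f_n - f\|_{\overline{\mathbb{D}}} \to 0$.

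\textbf{Upper bound.} Every point $f_n(w)$ lies within $\epsilon_n$ of $f(w) \in K$. Hence $K_n \subset K^{\epsilon_n} := \{z : \mathrm{dist}(z,K) \le \epsilon_n\}$. The sets $K^{\epsilon}$ decrease to $K$ as $\epsilon \downarrow 0$, and capacity is continuous along decreasing sequences of compact sets (Ransford, Theorem 5.1.3). Together with monotonicity this gives $\limsup_n \mathrm{cap}(K_n) \le \mathrm{cap}(K)$.

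\textbf{Lower bound.} This is the step the pinching example preceding Lemma \ref{lem:10} warns about, and it is where uniform convergence up to the boundary is really needed. Fix $r<1$. Since $K \supset f(\overline{\mathbb{D}_r})$, it suffices to compare with the curves $f_n(\partial \mathbb{D}_r)$. There are two routes.

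\emph{Route one.} By the uniform convergence, the Jordan curve $f_n(\partial\mathbb{D})$ stays within $\epsilon_n$ of $f(\partial\mathbb{D})$. A point $z$ with $\mathrm{dist}(z, f(\partial \mathbb{D})) > \epsilon_n$ and $z$ inside $f(\partial\mathbb{D})$ therefore lies in the bounded component of the complement of $f_n(\partial\mathbb{D})$. This holds because the winding number of $f_n(\partial\mathbb{D})$ about $z$ equals that of $f(\partial\mathbb{D})$, via the homotopy $(1-s)f + s f_n$, which avoids $z$. Consequently $K_n \supset \{z \in K : \mathrm{dist}(z,\partial K) > \epsilon_n\} =: K_{(\epsilon_n)}$.

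\emph{Route two (cleaner).} Use the Green function or the exterior map. I expect this route to be unnecessary given Route one.

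For Route one, the sets $\overline{K_{(\epsilon)}}$ increase as $\epsilon\downarrow 0$ to a set whose union contains the open domain $D(Q,1)$. Capacity of an increasing union of Borel sets is the limit of the capacities (Ransford, Theorem 5.1.3), and $\mathrm{cap}(D(Q,1)) = \mathrm{cap}(K)$ since the boundary of a Jordan domain is the outer boundary of the closure (the complement is connected). Hence $\liminf_n \mathrm{cap}(K_n) \ge \lim_{\epsilon\to0}\mathrm{cap}(K_{(\epsilon)}) = \mathrm{cap}(K)$.

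The main obstacle is the lower bound, specifically justifying that $K_n$ contains the inner parallel set $K_{(\epsilon_n)}$. I would do this via the winding-number argument, which uses only the uniform closeness of the two Jordan parametrizations $f_n|_{\partial\mathbb{D}}$ and $f|_{\partial\mathbb{D}}$ provided by Lemma \ref{lem:10}. The inner-continuity step is then routine: either use inner regularity of capacity on open sets, or approximate from inside by the compact sets $f(\overline{\mathbb{D}_r})$, whose capacity tends to $\mathrm{cap}(K)$ as $r\to1$. Combining the two bounds gives $\mathrm{cap}(K_n) \to \mathrm{cap}(K)$.
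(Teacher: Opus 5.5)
Your upper bound is correct, and it is more elementary than the paper's argument: the inclusion $K_n\subset K^{\epsilon_n}$ is the same estimate the paper uses for the Hausdorff distance, and Ransford's Theorem 5.1.3(a) then finishes it without any conformal mapping. The winding-number containment $\{z\in D(Q,1):\operatorname{dist}(z,\partial K)>\epsilon_n\}\subset D(P_n,1)$ is also correct.

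\textbf{The gap.} It is the step you call routine, namely $\mathrm{cap}(D(Q,1))=\mathrm{cap}(K)$. Knowing that $\partial K$ is the outer boundary of $K$ only gives $\mathrm{cap}(K)=\mathrm{cap}(\partial K)$. It says nothing about the open set $D(Q,1)$, whose capacity is the supremum of $\mathrm{cap}(L)$ over compact $L\subset D(Q,1)$, and no such $L$ meets $\partial K$. Inner regularity is just this definition restated.

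The identity $\mathrm{cap}(U)=\mathrm{cap}(\overline{U})$ is a lower-semicontinuity statement, and it fails for general bounded open sets. For example, take discs of rapidly shrinking radii centred at a dense sequence in $[0,1]$. By the subadditivity inequality for $1/\log(\mathrm{diam}/\mathrm{cap})$, their union has arbitrarily small capacity, yet its closure contains $[0,1]$.

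Your alternative, that $\mathrm{cap}(f(\overline{\mathbb{D}_r}))\to\mathrm{cap}(K)$, only restates the claim. It is the same kind of problem as the corollary itself: $f(r\,\cdot)\to f$ uniformly on $\overline{\mathbb{D}}$, and you need the capacities of the images to converge; the only simplification is that the approximants are nested inside $K$. So the winding-number step moves the difficulty from $K_n$ to $K$ rather than removing it. The real content — excluding pinching-type losses, which requires using connectedness — sits in the step you did not prove. Your ``Route two'', which you set aside, is where the work actually lies.

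\textbf{How to close it, and comparison with the paper.} You need a genuine tool. Either of the following works.
\begin{enumerate}
\item The Carath\'eodory kernel theorem applied to the exterior maps of $\overline{\mathbb{C}}\setminus f(\overline{\mathbb{D}_r})$. These domains decrease as $r\uparrow 1$. Their kernel with respect to $\infty$ is the component containing $\infty$ of $\operatorname{int}\bigl(\overline{\mathbb{C}}\setminus D(Q,1)\bigr)=\overline{\mathbb{C}}\setminus K$, which is all of it because $K$ is a closed Jordan domain.
\item A uniform Green-function estimate for continua: $g_{\overline{\mathbb{C}}\setminus E}(z,\infty)\le\omega(\operatorname{dist}(z,E))$ with $\omega(\eta)\to 0$, for continua $E$ in a fixed disc with diameter bounded below. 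Combined with the maximum principle, this gives $\log\mathrm{cap}(K)-\log\mathrm{cap}(L)\le\max_{\partial K}g_{\overline{\mathbb{C}}\setminus L}(\cdot,\infty)$ for continua $L\subset K$.
\end{enumerate}

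Either tool applies directly to the continua $K_n$, since $d_H(K_n,K)\le\|f_n-f\|_{\overline{\mathbb{D}}}\to 0$. That makes your inner parallel sets unnecessary. This is exactly the paper's route. It upgrades Hausdorff convergence to Carath\'eodory kernel convergence $\overline{\mathbb{C}}\setminus K_n\to\overline{\mathbb{C}}\setminus K$ with respect to $\infty$. It then applies the kernel theorem to the exterior Riemann maps normalized by $g_n(z)\sim\mathrm{cap}(K_n)z$, which gives convergence of the leading coefficients, i.e.\ of the capacities, in both directions at once.
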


\begin{proof}[Proof of Corollary \ref{cor:4}] Let $K_n=\overline{D(P_n,1)}=\overline{f_n(\mathbb{D})}$ and $K=\overline{D(Q,1)}=\overline{f(\mathbb{D})}$ and we know from the proof of Lemma \ref{lem:10} that $K_n,K \subset \mathbb{D}_{R_0}$. By Lemma \ref{lem:10}, $\|f_n-f\|_{\overline{\mathbb{D}}} \rightarrow 0$. We claim that the Hausdorff distance $d_H(K_n,K)\le \|f_n-f\|_{\overline{\mathbb{D}}} \rightarrow 0$ where $d_H(K_n,K)=\max\Bigl\{
\sup_{x\in K_n}\operatorname{dist}(x,K),\;
\sup_{y\in K}\operatorname{dist}(y,K_n)
\Bigr\}$. For any $x\in K_n$, there is $z\in\overline{\mathbb{D}}$ such that $x=f_n(z)$.  
Since $f(z)\in K$, we have 
$$\operatorname{dist}(x,K)\le\bigl|f_n(z)-f(z)\bigr|\le\|f_n-f\|_{\overline{\mathbb{D}}}$$
and hence
$$
\sup_{x\in K_n}\operatorname{dist}(x,K)\le\|f_n-f\|_{\overline{\mathbb{D}}}.
$$
Similarly, for any $y\in K$, we have
$\operatorname{dist}(y,K_n)\le\|f_n-f\|_{\overline{\mathbb{D}}}$ and therefore 
$d_H(K_n,K)\le \|f_n-f\|_{\overline{\mathbb{D}}} \rightarrow 0$.

Let $\Omega_n=\overline{\mathbb{C}}\backslash K_n$ and $\Omega=\overline{\mathbb{C}}\backslash K$. We now show that $\Omega_n\to\Omega$ in the sense of Carath\'eodory kernels with
respect to $\infty$, i.e. if $U$ is the Carath\'eodory kernel of $\Omega_n$, then $U=\Omega$.

Let $E$ be a compact subset of $\Omega$. Since $\Omega$ is connected and contains $\infty$,
there is a compact connected set
$$
L\subset\Omega,\qquad E\cup\{\infty\}\subset L.
$$
Because $L\cap K=\emptyset$ and $K\subset\overline{\mathbb D}_{R_0}$,
there is $\varepsilon>0$ such that
$$
\operatorname{dist}
 \bigl(L\cap\overline{\mathbb D}_{R_0+1},K\bigr)>\varepsilon.
$$
Recall that for all sufficiently large $n$,
$$
d_H(K_n,K)<\varepsilon,
$$
and hence $L\cap K_n=\emptyset$. In fact, if $z\in L\cap K_n$, then, since
$K_n\subset\overline{\mathbb D}_{R_0}$, we have
$$
z\in L\cap\overline{\mathbb D}_{R_0+1},
$$
and hence $\operatorname{dist}(z,K)>\varepsilon$. On the other hand,
the definition of the Hausdorff distance gives
$$
\operatorname{dist}(z,K)\le d_H(K_n,K)<\varepsilon 
$$
which is a contradiction. Therefore $L\cap K_n=\emptyset$. We then conclude that $E \subset L\subset \Omega_n=\overline{\mathbb{C}}\backslash K_n$ for all sufficiently large $n$. This argument applies equally to the subsequence $\Omega_{n_k}$
so that every compact subset of $\Omega$ is contained in $\Omega_{n_k}$. By the maximality of the kernel, this gives $\Omega\subset U$.

Now let $z\in U$.
Choose a compact arc $\gamma\subset U$ joining $z$ to $\infty$.
Since $\gamma$ is compact and $U$ is open, there exists
$\epsilon>0$ such that its closed spherical
$\epsilon$-neighborhood satisfies
$N_\epsilon(\gamma)\subset U$.
By the definition of the kernel, every compact subset of $U$ is
contained in $\Omega_{n_k}$ for all sufficiently large $k$. Hence
$$
N_\varepsilon(\gamma)\subset\Omega_{n_k}
$$
for all sufficiently large $k$. Therefore $\gamma$ stays a positive distance
from $K_{n_k}$. Since $K_{n_k}\to K$ in the Hausdorff metric, we obtain
$$
\gamma\cap K=\emptyset
$$
It follows that $\gamma \subset \Omega=\overline{\mathbb C}\setminus K$, and hence $z\in\Omega$. We conclude that $U=\Omega$.

We are now ready to apply the Carath\'eodory kernel theorem with the Riemann mappings $g_n : \overline{\mathbb{C}} \setminus \overline{\mathbb{D}} \rightarrow \overline{\mathbb{C}} \setminus \overline{D(P_n,1)}$ fixing the point at $\infty$ and satisfying $$g_n(z) = \mathrm{cap} (\overline{ D(P_n,1)}) z  + a_{0,n} +  O(z^{-1})$$ near $\infty$. Since we have just shown that $\Omega_n=\overline{\mathbb{C}} \setminus \overline{D(P_n,1)} ; n=1,2, \ldots$ converges in the sense of Carath\'eodory kernels with
respect to $\infty$ to $\Omega=\overline{\mathbb{C}} \setminus \overline{D(Q,1)}$, the Carath\'eodory kernel theorem implies that  $g_n $ converges locally uniformly (with respect to the spherical metric on $\overline{\mathbb{C}}$) to the Riemann map $g : \overline{\mathbb{C}} \setminus \overline{\mathbb{D}} \rightarrow \overline{\mathbb{C}} \setminus \overline{D(Q,1)}$, which satisfies $$g(z) = \mathrm{cap}(\overline{D(Q,1)}) z   + a_0+ O(z^{-1})$$ near $\infty$.

 We conclude that $\mathrm{cap}(\overline{D(P_n,1)}) \rightarrow \mathrm{cap}(\overline{D(Q,1)})$ as desired.
\end{proof}

\begin{proof}[Proof of Theorem \ref{thm:3}]

Since the quantity we are maximizing over $\mathcal{P}_{d}$ $$\frac{||P'||_{\overline{D(P,t(P))}} \mathrm{cap}(\overline{D(P,t(P))})}{t(P)}$$ is invariant under linear rescalings of the form $P \rightarrow \alpha P(\beta z)$, we may restrict consideration to polynomials satisfying $t(P) = 1$ and $P'(0) = 1$, i.e. we restrict consideration to Schlicht normalized polynomials of degree $\leq d$. Let $$P_n \in \mathcal{P}_{d} ; n=1,2,...$$ be an extremizing sequence of Schlicht normalized polynomials, i.e.  $$\frac{||P_n'||_{\overline{D(P_n,1)}} \mathrm{cap}(\overline{D(P_n,1)})}{t(P_n)} \rightarrow_{n \rightarrow \infty} \sup_{P \in \mathcal{P}_{d}} \frac{||P'||_{\overline{D(P,t(P))}} \mathrm{cap}(\overline{D(P,t(P))})}{t(P)} $$ By the compactness of Schlicht normalized polynomials of degrees $\leq d$ (see Remark \ref{compactnessOfSchlichtNormalizedPolynomials}) we may without loss of generality, after passing to an appropriate subsequence whose relabeling we suppress, assume that there exists a polynomial $Q$ for which $P_n \rightarrow Q$ holds locally uniformly. Then (by Lemma \ref{lem:10}) $Q$ is also Schlicht normalized of degree $\leq d$, and by Corollary \ref{cor:4} $$ \mathrm{cap}(\overline{D(P_n,1)}) \rightarrow  \mathrm{cap}(\overline{D(Q,1)})$$ We claim that $$||P_n'||_{ \overline{D(P_n,1)}}  \rightarrow ||Q'||_{\overline{D(Q,1)}} $$ Indeed, this follows from  the fact that $\overline{D(P_n,1)} \rightarrow \overline{D(Q,1)}$ in the Hausdorff metric (which is a consequence of Lemma \ref{lem:10}), and the local uniform convergence $P_{n} \rightarrow Q$. It follows that $$||P_{n} '||_{\overline{D(P_n,1)}} \mathrm{cap}(\overline{D(P_n,1)}) \rightarrow_{n \rightarrow \infty} ||Q'||_{\overline{D(Q,1)}} \mathrm{cap}(\overline{D(Q,1)})$$ so that $$||Q'||_{\overline{D(Q,1)}} \mathrm{cap}(\overline{D(Q,1)}) = \sup_{P \in \mathcal{P}_{d}} \frac{||P'||_{\overline{D(P,t(P))}} \mathrm{cap}(\overline{D(P,t(P))})}{t(P)}$$ In other words, $Q \in \mathcal{P}_{d}$ is a (Schlicht normalized) extremal polynomial for this quantity.

Using Eremenko and Hayman's quasiconformal deformation argument (as it is applied in \cite{Eremenko2007}), one then obtains that we may take $Q$ to satisfy $|Q(b)| = t(Q) = 1$ for every critical point $b \in Z(Q') \setminus Z(Q)$. A full justification for this is as follows. In the notation of the proof of Theorem \ref{thm:9} (i.e setting $P \coloneq Q$ to be an extremal polynomial), we have by the maximum modulus principle \begin{equation} \label{maxMod} \log ||P_{\lambda}'||_{\overline{D(P_{\lambda},1)}} = \log||P_{\lambda}'||_{\partial D(P_{\lambda},1)} \end{equation} and since $\phi_{\lambda}$ is holomorphic in some neighbourhood of $\partial D(P,1)$, and $$\partial D(P_{\lambda},1) = \phi_{\lambda}(\partial D(P,1))$$ we are guaranteed subharmonicity of $$\lambda \rightarrow \log ||P_{\lambda}'||_{\partial D(P_{\lambda},1)}$$ in some neighbourhood of the origin, in the same way as in the proof of Lemma 2 of \cite{Eremenko2007}. It follows by the above subharmonicity, and the equality (\ref{maxMod}), that $$\lambda \rightarrow \log ||P_{\lambda}'||_{\overline{D(P_{\lambda},1)}}$$ is subharmonic in a neighbourhood of the origin. Similarly, in view of the fact (since these are Jordan domains) that \begin{equation} \label{capEqualsBoundary} \mathrm{cap}(\overline{D(P_{\lambda},1)}) = \mathrm{cap}(\partial D(P_{\lambda},1)) \end{equation} and the subharmonicity of $$\lambda \rightarrow \log \mathrm{cap}(\partial D(P_{\lambda},1))$$ which follows in the same way as in the proof of Lemma 3 of \cite{Eremenko2007}, we find that $$\lambda \rightarrow \log \mathrm{cap}(\overline{D(P_{\lambda},1)}) $$ is subharmonic in a neighbourhood of the origin. Thus, the logarithm of the expression we seek to maximize is subharmonic in the parameter $\lambda$ and must be constant. We are thus free to move all (non-zero) critical values of $P$ not lying on $\partial \mathbb{D}$ towards $\partial \mathbb{D}$, and recover an extremal polynomial all of whose non-zero critical values lie on the unit circle via a limiting argument as in \cite{eremenko1999length} and \cite{Eremenko2007}. 

Let $Q$ denote such an extremal polynomial (with all non-zero critical values of equal modulus $1 = t(Q)$). The condition on $Q$'s (non-zero) critical values implies $Q^{-1}(\overline{\mathbb{D}_{t(Q)}})$ is connected (see e.g. the proof of Lemma 6 in \cite{eremenko1999length}) and hence $$Q^{-1}(\overline{\mathbb{D}_{t(Q)}})= E(Q,t(Q))$$ where here (see Section \ref{Notation}) for a polynomial $H$ fixing the origin, we let $E(H,r)$ denote the component of $H^{-1}(\overline{\mathbb{D}}_{r})$ containing the origin. Let $m \coloneq \deg(Q)$. We have already established that $m \geq 2$, and by construction we have $d \geq m$. Theorem \ref{thm:5} gives

\begin{equation} \label{CapacCompApp}
\frac{\mathrm{cap}(E(Q,t(Q)))}{\mathrm{cap}(\overline{D(Q,t(Q))})} = \frac{\mathrm{cap}({Q^{-1}(\overline{\mathbb{D}_{t(Q)}})})}{\mathrm{cap}(\overline{D(Q,t(Q))})} \geq C(m) = (2m-1)^{\frac{1}{m}} \geq (2d-1)^{\frac{1}{d}}
\end{equation}

The final inequality above follows from the fact that $$x \rightarrow (2x-1)^{\frac{1}{x}}$$ is strictly decreasing for $x \in \{2,3,4,5,... \}$ 

The above inequality (\ref{CapacCompApp}) combined with Eremenko's Markov Inequality (\cite{Eremenko2007}, Theorem 1) gives

$$||Q'||_{ \overline{D(Q,t(Q))}} \mathrm{cap}(\overline{D(Q,t(Q))}) \leq C(m)^{-1}||Q'||_{E(Q,t(Q))} \mathrm{cap}({E(Q,t(Q))}) $$ $$\leq C(m)^{-1} 2^{\frac{1}{m}-1} m^2 t(Q)   \leq C(m)^{-1} 2^{\frac{1}{d}-1} d^2 t(Q)  $$ $$= 2^{\frac{1}{d}-1} C(m)^{-1} d^2 \leq (\frac{2}{2d-1})^{\frac{1}{d}} \frac{d^2}{2}$$

This completes the proof of Theorem \ref{thm:3}.

\end{proof}

\subsection{Proof of Corollary \ref{cor:2}} \label{sec4.1}
As a corollary of Theorem \ref{thm:3}, we obtain.

\DMVC*

\begin{proof}[Proof of Corollary \ref{cor:2}]

The existence of an extremal polynomial follows along the same lines as Theorem \ref{thm:9}. Namely, we use the fact that the class of Schlicht normalized polynomials is compact (see Remark \ref{compactnessOfSchlichtNormalizedPolynomials}), and any critical point sequence $b_n$ lying on the boundary of $D(P_n,1) = D(P_n,t(P_n))$, where $P_n$ is a Schlicht normalized (i.e $t(P_n) =1$ and $P_n'(0)=1$ for each $n$) sequence of polynomials converging locally uniformly to a Schlicht normalized polynomial $P$, converges to a critical point $b$ lying on the boundary of $D(P,1)$ after passing to a further subsequence if necessary (we suppress the relabeling). This follows from the fact that $b_n$ is bounded (away from $0$ and $\infty$) respectively by the Koebe $\frac{1}{4}$-Theorem and (\ref{Markov}). To prove that such a (Schlicht normalized) extremal polynomial may be chosen so that all its non-zero critical values lie on the unit circle, one employs essentially the same argument as in the proof of Theorem \ref{thm:9}, since the problem of minimizing $L(P)$ over Schlicht normalized polynomials $P \in \mathcal{P}_{d}$ is the same as the problem of maximizing the quantity $\max_{b \in Z(P') \cap \partial D(P,1)} |b|$ over all Schlicht normalized polynomials $P \in \mathcal{P}_{d}$, and the latter quantity is subharmonic with respect to the quasiconformal deformation used in the proof of Theorem \ref{thm:9}. More in detail, we have that the deformed polynomials $$P_{\lambda} : \lambda \in N$$ are Schlicht normalized, where we are using the notation of the proof of Theorem \ref{thm:9}, and $$\phi_{\lambda}(\partial D(P,1)) = \partial D(P_{\lambda},1) ; \lambda \in N$$ $$Z(P_{\lambda}') \cap \partial D(P_{\lambda},1) = \phi_{\lambda}(Z(P') \cap \partial D(P,1)) ; \lambda \in N$$ and hence $$\lambda \rightarrow \max_{b_{\lambda} \in Z(P_{\lambda}') \cap \partial D(P_{\lambda},1)} |b_{\lambda}| = \max_{b \in Z(P') \cap \partial D(P,1)} |\phi_{\lambda}(b)| $$ is the maximum of finitely many subharmonic functions and is thus subharmonic in $\lambda \in N$. Since it attains a maximum at the interior point $0 \in N$, we conclude from the maximum principle for subharmonic functions that it is constant, and we can proceed to obtain a (Schlicht normalized) extremal polynomial of degree $\leq d$ with all non-zero critical values lying on the unit circle in the same way as in the proof of Theorem \ref{thm:9}.

Let $H$ be \textbf{any} polynomial in $\mathcal{P}_{d}$. To prove the first part of the above theorem we use the following argument. 

Fix any $b \in Z(H') \cap \partial D(H,t(H))$ and rescale $$H \rightarrow \alpha H(\beta z) \eqcolon P(z)$$ So that $b=1,P(1) = 1$ and $t(P) = 1$. Then $b=1$ is a critical point of $P$ which is the terminal point of a Jordan arc $\gamma$ from $0$ to $1$ (contained in $D(P,t(P)) = D(P,1)$) which $P$ maps homeomorphically onto $[0,1]$. Let $w_0$ be the unique point on $\gamma$ such that $P(w_0) = \frac{1}{2}$. Consider $Q(z) = 2P(z + w_0) - 1$.

Let $D(Q,1)$ denote the component of $Q^{-1}(\mathbb{D})$ containing zero. Then the (open) Jordan arc $\gamma - w_0$ with endpoints $-w_0$ and $1 - w_0$ contains $0$ and is mapped by $Q$ homeomorphically onto $(-1,1)$. It follows that $\gamma - w_0 \subset D(Q,1)$. Let $f \coloneq \restr{P^{-1}}{\mathbb{D}}$, where $P^{-1}$ denotes the branch defined on the unit disk which fixes the origin. We in fact have that $t(Q) = 1$, since $w_0 = f(\frac{1}{2})$, and $Q$ maps the set $f(\Delta(\frac{1}{2};\frac{1}{2})) - w_0$ (which contains $0$) univalently onto the open unit disk, so that $t(Q) \geq 1$. On the other hand the point $1 - w_0$ is a critical point of $Q$ lying on the boundary of this set, thus we necessarily have that $t(Q) = 1$. By Theorem \ref{thm:3} and monotonicity of logarithmic capacity we have:

$$\mathrm{cap} (\gamma - w_0) 2 |P'(0)| \leq \mathrm{cap}(\overline{D(Q,1)}) 2 |P'(0)| = \mathrm{cap}(\overline{D(Q,1)}) |Q'(-w_0)| $$ $$ \leq \mathrm{cap} (\overline{D(Q,1)}) \sup_{z \in \overline{D(Q,1)}} |Q'(z)| \leq 2^{\frac{1}{d} - 1} d^2 (2d-1)^{-\frac{1}{d}}$$

Rearranging the above while noting that $\mathrm{cap}(\gamma - w_0) = \mathrm{cap}(\gamma)$ gives.

 \begin{equation} \label{finalIneqCor2} \frac{1}{|P'(0)|} \geq \frac{(2d-1)^{\frac{1}{d}}2^{2-\frac{1}{d}} \mathrm{cap}(\gamma)}{d^2} \end{equation}

 Using that $\mathrm{cap}(\gamma) \geq \frac{1}{4}$ (since this is a continuum containing the points $0$ and $1$), we obtain

 $$ S(H,b) = S(P,1) = \frac{1}{|P'(0)|} \geq \frac{(2d-1)^{\frac{1}{d}} 2^{-\frac{1}{d}} }{d^2} = \frac{(d - \frac{1}{2})^{\frac{1}{d}}}{d^2}$$

This concludes the proof of Corollary \ref{cor:2}.

\end{proof}

\subsection{Sharpness of Corollary \ref{cor:2}} \label{sec4.2}
The right-hand side of (\ref{finalIneqCor2}) is exactly equal to $$\frac{\mathrm{cap}(\gamma)}{\frac{1}{4}} \frac{\left(d-\frac{1}{2}\right)^{\frac{1}{d}}}{d^2} = \frac{\mathrm{cap}(\gamma)}{\mathrm{cap}[0,1]} \frac{\left(d-\frac{1}{2}\right)^{\frac{1}{d}}}{d^2} =  \frac{\mathrm{cap}(\gamma)}{\mathrm{cap}[0,b]} \frac{\left(d-\frac{1}{2}\right)^{\frac{1}{d}}}{d^2} $$ Where $b = 1$ is the endpoint of the Jordan arc $\gamma \subset \overline{D(P,1)}$ from $0$ to the critical point $1$ such that $P(\gamma) = [0,1] = [0,P(1)] = [0,P(b)]$. Note that $$ \frac{\mathrm{cap}(\gamma)}{\mathrm{cap}[0,b]}$$ is invariant under rescalings of the form $P \rightarrow \alpha P(\beta z)$. Therefore we obtain that for any polynomial $P \in \mathcal{P}_{d} ; d \geq 2$ and $$b \in Z(P') \cap \partial D(P,t(P))$$ letting $\gamma$ denote the Jordan arc from $0$ to $b$ in $\overline{ D(P,t(P))}$ for which $P(\gamma) = [0, P(b)]$, the below inequality holds
$$
S(P,b) \geq \frac{\mathrm{cap}(\gamma)}{\mathrm{cap}[0,b]}\frac{\left(d-\frac{1}{2}\right)^{\frac{1}{d}}}{d^2}
\geq \frac{\left(d - \frac{1}{2}\right)^{\frac{1}{d}}}{d^2}.
$$
Note that $$\frac{\mathrm{cap}(\gamma)}{\mathrm{cap}[0,b]} \geq 1$$ and equality holds if and only if $\gamma$ coincides with the segment $[0,b]$ (this follows from Lemma A4 on page 307 of \cite{dubinin2014} since $\gamma$ is a Jordan arc and is thus equal to the support of its equilibrium measure). 

We state this as Corollary \ref{SharperDualIneq} below.

\SharperDMVC*

One consequence of Corollary \ref{SharperDualIneq} (and a version of the Markov inequality for so-called \textbf{monotone polynomials}, see (\ref{MonotoneMarkov}) below) is that we in fact necessarily have $$L(d) \coloneq \inf_{P \in \mathcal{P}_{d}} L(P) >\frac{\left(d - \frac{1}{2}\right)^{\frac{1}{d}}}{d^2} $$ for all $d \geq 2$. We formulate this as Theorem \ref{Nonsharpness}.

\begin{theorem} \label{Nonsharpness}
For any $d \geq 2$ let $$L(d) \coloneq \inf_{P \in \mathcal{P}_{d}} L(P)$$ The below inequality holds $$L(d) > \frac{\left(d - \frac{1}{2}\right)^{\frac{1}{d}}}{d^2} = \frac{2^{-\frac{1}{d}} C(d)}{d^2} $$
\end{theorem}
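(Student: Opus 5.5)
We argue by contradiction, combining the existence of an extremal polynomial from Corollary \ref{cor:2} with an analysis of when equality could hold in the chain of inequalities leading to (\ref{finalIneqCor2}).

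By Corollary \ref{cor:2}, $L(d)$ is attained by some extremal $P_0\in\mathcal{P}_d$ with a critical point $b_0\in Z(P_0')\cap\partial D(P_0,t(P_0))$, so that $L(d)=S(P_0,b_0)$. Suppose, for contradiction, that $L(d)=\frac{(d-\frac12)^{1/d}}{d^2}$. Then Corollary \ref{SharperDualIneq} gives $\mathrm{cap}(\gamma)=\mathrm{cap}[0,b_0]$, where $\gamma$ is the arc in $\overline{D(P_0,t(P_0))}$ from $0$ to $b_0$ with $P_0(\gamma)=[0,P_0(b_0)]$. By the equality case (Lemma A4 of \cite{dubinin2014}), $\gamma$ must be the segment $[0,b_0]$. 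We rescale as in the proof of Corollary \ref{cor:2}, so that $b_0=1$, $P_0(1)=1$ and $t(P_0)=1$. Then $P_0$ maps $[0,1]$ homeomorphically onto $[0,1]$.

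From this, $P_0$ is real on $[0,1]$. Since a polynomial that is real on an interval has real coefficients, $P_0$ has real coefficients, is strictly monotone on $[0,1]$, and has a critical point at $1$. Every inequality in the chain must also be an equality. In particular
\[
\frac{1}{|P_0'(0)|}=\frac{(2d-1)^{1/d}2^{2-1/d}\,\mathrm{cap}([0,1])}{d^2},
\]
which is equivalent to
\[
\|P_0'\|_{[0,1]}\ \ge\ |P_0'(0)|=\frac{d^2}{2}\Big(\frac{2}{2d-1}\Big)^{1/d}\cdot 2\ =\ d^2\Big(\frac{2}{2d-1}\Big)^{1/d}.
\]

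We now bring in a Markov inequality for monotone polynomials, which is the inequality (\ref{MonotoneMarkov}) the paper alludes to. For a polynomial $p$ of degree $\le d$ that is monotone on $[-1,1]$, it gives $\|p'\|_{[-1,1]}\le c_d\|p\|_{[-1,1]}$ with $c_d$ of order $d^2/4$, in fact roughly $(d+1)^2/4$. Transferring to $[0,1]$ via $x\mapsto 2x-1$ gives $\|P_0'\|_{[0,1]}\le 2c_d\|P_0\|_{[0,1]}=2c_d$. This should contradict the display above, since $2c_d\approx d^2/2<d^2(2/(2d-1))^{1/d}$ for $d\ge2$. The last inequality needs checking. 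Taking $c_d$ to be the sharp constant for monotone polynomials (Lorentz type, roughly $\frac{(d+1)^2}{4}$ for odd $d$), we need $\frac{(d+1)^2}{2}<d^2(2/(2d-1))^{1/d}$. At $d=2$ this reads $4.5<4\cdot(2/3)^{1/2}\approx3.27$, which fails. So a sharper approach is needed.

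The fix is to apply the equality analysis to the auxiliary polynomial $Q(z)=2P_0(z+w_0)-1$ from the proof of Corollary \ref{cor:2}, rather than to $P_0$ itself. Equality forces the maximum of $|Q'|$ over $\overline{D(Q,1)}$ to be attained at the interior point $-w_0$. By the maximum modulus principle this forces $Q'$ to be constant, so $Q$ is linear, which contradicts the fact that $Q$ has a critical point at $1-w_0$. This is the cleaner route, and I would use it as the main argument. It needs only that $-w_0$ lies in the open domain $D(Q,1)$, which holds because $\gamma-w_0\subset D(Q,1)$ and $-w_0$ is an interior point of that arc, together with the equality $|Q'(-w_0)|=\sup_{\overline{D(Q,1)}}|Q'|$ forced by equality in the chain. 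The main obstacle is justifying that equality must propagate through every step, namely through Theorem \ref{thm:3} and the capacity monotonicity. This is exactly what attainment of $L(d)$ by $P_0$ provides, since the chain is a sequence of non-strict inequalities whose two ends coincide under our hypothesis. With this argument the monotone Markov inequality serves only as an alternative route, and the strict inequality $L(d)>\frac{(d-\frac12)^{1/d}}{d^2}$ follows.
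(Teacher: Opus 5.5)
Everything up to $\gamma=[0,1]$ is fine and matches the paper, including $P_0$ being real and monotone on $[0,1]$. The gap is in your main argument, which rests on the claim that $-w_0$ is an interior point of the arc $\gamma-w_0$ and so lies in $D(Q,1)$. This claim is false. The arc $\gamma-w_0$ runs from $-w_0$ to $1-w_0$, and the point in its relative interior is $0$, where $Q(0)=2P_0(w_0)-1=0$. The point $-w_0$ is the endpoint coming from the origin of $P_0$, and $Q(-w_0)=2P_0(0)-1=-1$, so $-w_0\in\partial D(Q,1)$; only the open arc lies in $D(Q,1)$. The equality you correctly extract from the chain, $2|P_0'(0)|=|Q'(-w_0)|=\sup_{\overline{D(Q,1)}}|Q'|$, therefore only says that $|Q'|$ attains its maximum over $\overline{D(Q,1)}$ at a boundary point. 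That is what the maximum modulus principle predicts anyway, so no contradiction arises, and the proposal as written does not prove the theorem.

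The route you discarded is exactly the paper's. It failed in your computation only because you applied the monotone Markov inequality to $P_0(\frac{x+1}{2})$, whose values fill $[0,1]$, instead of to the centered polynomial $Q_0(x)=2P_0(\frac{x+1}{2})-1$. $Q_0$ is monotone on $[-1,1]$ with $\|Q_0\|_{[-1,1]}=1$ and $Q_0'(x)=P_0'(\frac{x+1}{2})$. So (\ref{MonotoneMarkov}) gives $|P_0'(0)|\le\frac{(m+1)^2}{4}\le\frac{(d+1)^2}{4}$ with $m=\deg P_0$, i.e. $L(d)\ge\frac{4}{(d+1)^2}$. This recovers the factor $2$ you lost: at $d=2$ the comparison becomes $2.25<4\sqrt{2/3}\approx 3.27$. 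Moreover $\frac{(d-\frac12)^{1/d}}{d^2}<\frac{4}{(d+1)^2}$ holds for all $d\ge 2$, since $(d-\frac12)^{1/d}<3^{1/3}$ and $(1+\frac1d)^2\le\frac94$ have product less than $4$.

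If you want to keep an equality-propagation argument, the step that genuinely cannot be an equality is $\mathrm{cap}(\gamma-w_0)\le\mathrm{cap}(\overline{D(Q,1)})$. The equilibrium measure of $\overline{D(Q,1)}$ is harmonic measure on the Jordan curve $\partial D(Q,1)$, which meets the arc only at its two endpoints. So that measure is carried by the complement of the arc, where the arc's Green function is positive, and the capacity inequality is strict.
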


\begin{proof}
    Suppose, for the sake of a contradiction, that $$L(d) = L(P_0)  = \frac{\left(\frac{2d-1}{2}\right)^{\frac{1}{d}}}{d^2} $$ where $P_0 \in \mathcal{P}_{d}$ is the extremal polynomial of Corollary \ref{cor:2}. We may assume without loss of generality that $1 \in \partial D(P_0,t(P_0)) \cap Z(P_0')$, $P_0(1) = 1 = t(P_0)$ and $S(P_0,1) = L(P_0)$. Then, by Corollary \ref{SharperDualIneq} $$\frac{\mathrm{cap}(\gamma)}{\mathrm{cap}[0,1]} = 1$$ so that $$\gamma = [0,1]$$ $P_0$ is \textbf{monotone} on $[0,1]$ and is necessarily real. For monotone real polynomials, $Q$ of degree $n \geq 1$ on $[-1,1]$ one has the below version of the Markov inequality (see page 3 of \cite{BorweinErdelyi1997})  \begin{equation}\label{MonotoneMarkov} 
\sup_{z\in [-1,1]}|Q'(z)| \;\le\; \frac{(n+1)^2}{4}\ \sup_{z\in [-1,1]}|Q(z)|,
\end{equation} 

Applying (\ref{MonotoneMarkov}) to $$Q_0(z) \coloneq 2 P_0(\frac{1}{2} (z+1)) - 1$$ we obtain that $$|P_0'(0)| \leq \frac{(m+1)^2}{4}$$ where $m \coloneq \deg(P_0) \leq d$, so that $$\frac{\left(\frac{2d-1}{2}\right)^{\frac{1}{d}}}{d^2} = L(d) = L(P_0) = \frac{1}{|P_0'(0)|} \geq \frac{4}{(m+1)^2} \geq \frac{4}{(d+1)^2} $$

This is a contradiction, as for every $d \geq 2$, we have $$\frac{\left(\frac{2d-1}{2}\right)^{\frac{1}{d}}}{d^2} <\frac{4}{(d+1)^2} $$

\end{proof}

\begin{remark} \label{asymptoticFactorInDualConjecture}
The proof of Theorem \ref{Nonsharpness} suggests one might expect that $$L(d) \geq \frac{c_0}{d^2}$$ as $d \rightarrow \infty$, for some absolute constant $c_0 > 1$. Indeed, for extremal polynomials $P$ with a capacity ratio close to $1$, the arc $\gamma$ is close (in the Hausdorff metric) to a straight line segment with the same endpoints, meaning $P$ is (asymptotically) ``close to monotone''. If one could establish an asymptotic version of the Markov inequality (\ref{MonotoneMarkov}) for monotone polynomials, the argument in Theorem \ref{Nonsharpness} would immediately yield $c_0 > 1$.
\end{remark}

\medskip
We end this section by noting that in contrast to Corollary \ref{cor:2},  Dubinin \cite{Dubinin2019} established the following \textbf{asymptotically sharp} inequality for the ``outermost'' critical lemniscates.

\begin{theorem}[Dubinin; Theorem 1 of \cite{Dubinin2019}]
    Let $P \in \mathcal{P}_{d}$, and let $$R(P) \coloneq \max_{b \in Z(P')} |P(b)|$$ Then for any $$b \in \partial E(P,R(P)) \cap Z(P')$$ the below inequality holds

    $$S(P,b) \geq \frac{ \tan{\frac{\pi}{4d}}}{d} \sim \frac{\frac{\pi}{4}}{ d^2}$$
\end{theorem}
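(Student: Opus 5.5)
Since this is Dubinin's Theorem 1 of \cite{Dubinin2019}, I would follow his symmetrization philosophy, adapted to the tools of this paper. The approach is: normalize, turn the statement into an inequality between two Green functions, and then minimize over configurations by a symmetrization argument.

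\textbf{Normalization.} By the invariance of $S(P,b)$ under $P\mapsto \alpha P(\beta z)$, I would normalize so that $R(P)=1$, $b=1$ and $P(1)=1$. The claim then becomes
\[
|P'(0)| \le \frac{d}{\tan\frac{\pi}{4d}} .
\]
Since every critical value of $P$ lies in $\overline{\mathbb{D}}$, the set $P^{-1}(\overline{\mathbb{D}})$ is connected (as in the proof of Lemma 6 of \cite{eremenko1999length}). Hence $E:=E(P,1)=P^{-1}(\overline{\mathbb{D}})$. Its complement $\Omega=\overline{\mathbb{C}}\setminus E$ is simply connected and contains no critical points of $P$. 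Consequently $G(z)=\frac1d\log|P(z)|$ is the Green function of $\Omega$ with pole at $\infty$, and $P^{1/d}$ maps $\Omega$ conformally onto $\Delta$.

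\textbf{Reduction to an extremal problem.} Choose the Jordan arc $\gamma\subset E$ from $0$ to $1$ with $P(\gamma)=[0,1]$, as in Corollary \ref{SharperDualIneq}. I would encode $|P'(0)|$ through the local behaviour of $\log|P|$ near its zero at $0$. Concretely, $-\log|P(z)|=-\log|z|-\log|P'(0)|+o(1)$, so $-\log|P'(0)|$ is the Robin-type constant at $0$ of the harmonic function $\log\frac{1}{|P|}$ on the interior region. The critical point $1\in\partial E$ enters as a boundary point where $2$ or more level arcs of $|P|=1$ meet at angles $\le \pi/2$. The target inequality is then a statement of the form: among all such ``condensers'' (a plate at $0$, a plate on the level set through the corner point $1$, total degree $d$), the reduced modulus at $0$ is minimized by a specific symmetric configuration.

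\textbf{Symmetrization.} I would apply Dubinin's dissymmetrization, or polarization with respect to lines through $0$ and $1$, to the condenser $(E,\{0\})$. The goal is to show that the quantity $\log|P'(0)|$ (a reduced modulus) can only increase, while
\begin{itemize}
\item the harmonic measure and the degree-$d$ angular structure at $\infty$ are preserved, and
\item the corner at $1$ is kept on the boundary.
\end{itemize}
This should reduce the problem to a configuration symmetric under $z\mapsto\bar z$ with $E\cap\mathbb{R}\supset[0,1]$. That configuration is modelled on a rescaled Chebyshev polynomial, with the $d$ sectors at $\infty$ of opening $2\pi/d$ folded into a single sector. In the model case, $P^{1/d}$ composed with a power map $w\mapsto w^{1/2}$ (coming from the $\pi/2$ corner at the critical point) gives an explicit conformal map onto a half-plane. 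The constant $\tan\frac{\pi}{4d}$ should then appear as the image of the critical point under $z\mapsto \tan(\frac{\pi}{4d}\cdot)$-type maps of a strip of width $\pi/(2d)$. This step is an explicit computation.

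\textbf{Main obstacle.} The hard step is the symmetrization itself: proving that dissymmetrization does not decrease $|P'(0)|\cdot|b|$ while keeping the critical point $b$ on $\partial E$. Such monotonicity is standard for condenser capacities, but here the relevant functional is a reduced modulus at an interior point. It must also be coupled with the constraint that $b$ is a zero of $P'$, which is not obviously stable under symmetrization. My first attempt would be to drop the polynomial structure and work with the class of pairs (a simply connected $\Omega\ni\infty$ with Green function $G$, a boundary point $b$ at which $\partial\Omega$ has an interior angle $\le\pi/2$). I would then prove the extremal inequality in that class by polarization plus the Carleman--Milloux / two-constants principle applied to $G$ along $\gamma$. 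Finally, I would check that the resulting bound is attained in the limit by polynomials, giving the asymptotic $\frac{\pi/4}{d^2}$.
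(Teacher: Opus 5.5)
The paper does not prove this statement; it is quoted from Dubinin without proof. Your proposal therefore has to stand on its own, and it does not.

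\textbf{The main gap: the symmetrization step.} The normalization, the connectedness of $E=P^{-1}(\overline{\mathbb{D}})$, and the identification of $\frac1d\log|P|$ with the Green function of $\Omega=\overline{\mathbb{C}}\setminus E$ are all fine. But the whole content of the theorem lies in the symmetrization step, which you state as a goal and yourself flag as unresolved. As formulated it cannot work:
\begin{itemize}
\item Polarization or dissymmetrization of $E$ does not produce a polynomial lemniscate. After the transformation, neither $P'(0)$ nor ``the critical point $b$'' is defined.
\item $|P'(0)|$ is an interior quantity. It is determined by $\Omega$ only through the analytic continuation of $\Psi^d$ into $E$, where $\Psi:\Omega\to\Delta$ is the exterior conformal map. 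That continuation exists precisely because $E$ is a lemniscate.
\item Your relaxed class of pairs $(\Omega,b)$ keeps neither this continuation nor the degree. A simply connected $\Omega$ and its Green function do not see $d$, so no inequality proved in that class can give a bound of order $\tan\frac{\pi}{4d}/d\asymp d^{-2}$.
\end{itemize}
What is missing is a functional that (i) makes sense for the symmetrized objects, (ii) encodes both $|P'(0)|\,|b|$ and $d$, and (iii) is provably monotone under the symmetrization.

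\textbf{The arc $\gamma$ need not exist.} In Corollary~\ref{SharperDualIneq} the arc exists because $b\in\partial D(P,t(P))$, where the inverse branch fixing $0$ is univalent on all of $\mathbb{D}_{t(P)}$. For $b\in\partial E(P,R(P))$, the lift of $[0,P(b)]$ from $0$ can end at a different preimage of $P(b)$. Take the paper's asymptotically extremal example $P_d(z)=T_d(z+\cos\frac{\pi}{2d})$ with $d\ge 3$. Its only critical values are $\pm1$, so the lift from $0$ is unique. It is the real segment from $0$ to $\cos\frac{\pi}{d}-\cos\frac{\pi}{2d}$ when $d$ is even, or to $1-\cos\frac{\pi}{2d}$ when $d$ is odd. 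In neither case does it end at $b_d$. Moreover $\gamma\subset E$, where $G\equiv 0$, so a two-constants argument ``for $G$ along $\gamma$'' has nothing to act on.

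\textbf{The constant is guessed, not derived.} The Chebyshev configuration you want to symmetrize towards gives
$S(P_d,b_d)=\frac{\sin(\pi/4d)}{d\cos(3\pi/4d)}$.
Since $\cos\frac{3\pi}{4d}<\cos\frac{\pi}{4d}$, this is strictly larger than $\frac{\tan(\pi/4d)}{d}$. So evaluating that model does not produce the stated bound. The factor $\tan\frac{\pi}{4d}$ has to come out of an actual estimate, and the proposal does not contain one.
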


    The above inequality is \textbf{asymptotically sharp}, in the sense that there exists a sequence of polynomials $P_d \in \mathcal{P}_{d}$, and critical points $b_d \in \partial E(P_d,R(P_d))$, such that $$\frac{S(P_d,b_d)}{\frac{ \tan{\frac{\pi}{4d}}}{d}} \rightarrow_{d \rightarrow \infty} 1$$

    For the asymptotic extremality, one can take $$P_d = T_d(z + \cos(\frac{\pi}{2d}))$$ and $$b_d \coloneq -\cos{\frac{\pi}{d}} - \cos{\frac{\pi}{2d} }$$ where $T_d$ is the degree $d$ Chebyshev polynomial of the first kind.

\bibliographystyle{spmpsci} 
\bibliography{sn-bibliography}

\end{document}